\numberwithin{equation}{section}
\numberwithin{figure}{section}
\def\R{{\mathbb R}}
\def\C{{\mathbb C}}
\def\T{{\mathbb T}}
\def\Z{{\mathbb Z}}
\newtheorem{theorem}{Theorem}[section]
\newtheorem{corollary}{Corollary}[section]
\newtheorem{proposition}{Proposition}[section]
\newtheorem{lemma}{Lemma}[section]
\newtheorem{remark}{Remark}[section]
\newtheorem{assumption}{Assumption}[section]
\begin{document}
\title[Energy growth and modified scattering]{Unbounded Sobolev trajectories and modified scattering theory for a wave guide nonlinear Schr\"odinger equation}
\author{Haiyan XU}
\address{Universit\'e Paris-Sud XI, Laboratoire de Math\'ematiques
d'Orsay, CNRS, UMR 8628} \email{{\tt Haiyan.xu@math.u-psud.fr}}
\thanks{This work was supported by grants from R\'egion Ile-de-France (RDMath - IdF). }
\keywords{wave guide Schr\"odinger equation, modified scattering, energy cascade, weak turbulence}
\subjclass[2010]{ 35Q55, 35B40}

\begin{abstract}
We consider the following wave guide nonlinear Schr\"odinger equation,
\begin{equation}
(i\partial _t+\partial _{xx}-\vert D_y\vert )U=\vert U\vert ^2U\   \tag{WS}
\end{equation}
on the spatial cylinder $\R _x\times \T _y$. We establish a modified scattering theory between small solutions to this equation and small solutions to the cubic Szeg\H{o} equation. The proof is an adaptation of the method of Hani--Pausader--Tzvetkov--Visciglia \cite{HPTV}. Combining this scattering  theory with a recent result by G\'erard--Grellier \cite{GG2015arxiv},  we infer existence of global solutions to (WS) which are unbounded in the space $L^2_xH^s_y(\R \times \T )$ for every $s>\frac 12$. 
\end{abstract}

\maketitle

\section{Introduction}
The purpose of this work is to study the large time behavior of solutions to the following Hamiltonian equation. On the cylinder $\R_x\times \T _y$, consider the Hilbert space $\mathcal H=L^2(\R \times \T )$ with the symplectic form
$$\omega (u,v)={\mathrm{Im}}(u|v)\ $$
and the Hamiltonian function on $\mathcal H$,
$$H(U)=\frac 12 \int _{\R \times \T } (\vert\partial _xU(x,y)\vert ^2+\vert D_y\vert U(x,y)\overline U(x,y))\, dx\, dy +\frac 14 \int _{\R \times \T }\vert U(x,y)\vert ^4\, dx\, dy \ ,$$
where $\vert D_y\vert :=\sqrt{-\partial_{yy}}$. The corresponding Hamiltonian system turns out to be a wave guide nonlinear Schr\"odinger equation,
\begin{equation}\label{HWS}
\left(i\partial_t +\mathcal{A}\right) U = \vert U\vert^2U,\ (x,y)\in \R\times\T\ ,
\end{equation}
where we set
$$\mathcal{A}:=\partial_{xx}-\vert D_y\vert\ .$$
Notice that, besides the energy $H(U)$, this equation formally enjoys the mass conservation law
$$\int _{\R \times \T}\vert U(t,x,y)\vert ^2\, dx\, dy =\int _{\R \times \T}\vert U(0,x,y)\vert ^2\, dx\, dy \ .$$
In particular, the trajectories are  bounded in $H^1_xL^2_y \cap
 L^2_xH^{\frac 12}_y$. These conservation laws correspond to a critical regularity for equation (\ref{HWS}), so that global wellposedness of the Cauchy problem is not easy. In this paper, we shall prove that global solutions do exist for every Cauchy datum satisfying a smallness assumption in an appropriate high regularity norm. However, our main objective in this paper is to study the possible large time unboundedness  of the solution,  in a slightly more regular norm than the energy norm, typically $L^2_xH^s_y(\R \times \T )$ for $s>\frac 12$. This general question of existence of unbounded Sobolev trajectories comes back to \cite{BourgainGAFA}, and was addressed by several authors for various Hamiltonian PDEs, see e.g. \cite{CKSTT2010,GGHW,Guardia,GK2015,Hani,HPTV,HaniThomann,HausProcesi,Pocovnicu2011DCDS,XU2014APDE}.
The choice of the equation (\ref{HWS}) is naturally based on the state of the art for this question concerning the nonlinear Schr\"odinger equation and the cubic half wave equation, which we recall in the next paragraphs.
\subsection{Motivation}
In this paragraph, we briefly recall the state of the art about the existence of unbounded Sobolev trajectories for the nonlinear Schr\"odinger equation and the cubic half wave equation. 
\subsubsection{The nonlinear Schr\"odinger equation}
Firstly, consider the  following Schr\"odinger equation with smooth initial data
\begin{equation}\label{NLScubique}
i\partial_t u +\Delta u=|u|^2u\ .
\end{equation}
If we consider the case with spatial domain $\R$ or $\T$, the $1D$ Schr\"odinger turns out to be globally well-posed and completely integrable \cite{ZS1971}, and the higher conservation laws
in that case imply 
$$\Vert u(t)\Vert_{H^s}\leq C_s(\Vert u(0)\Vert_{H^s})\ ,\ s \geq 1\ , \text{ for all }t \in \R\ .$$
Hani--Pausader--Tzvetkov--Visciglia studied the nonlinear Schr\"odinger on the cylinder $\R_x\times\T_y^d$ \cite{HPTV}, they found infinite cascade solutions for $d\geq2$, which means there exists solutions with small Sobolev norms at the initial time, while admit infinite Sobolev norms when time goes to infinity.
\begin{theorem}\cite[Corollary 1.4]{HPTV}
Let $d\geq 2$ and $s\in \mathbb{N}$, $s\ge 30$. Then for every $\varepsilon>0$ there exists a global solution $U(t)$ of the cubic Schr\"odinger equation \eqref{NLScubique} on $\R\times\T^d$, such that
\begin{equation}
\Vert U(0)\Vert_{H^s(\mathbb{R}\times\mathbb{T}^d)}\le\varepsilon,\qquad\limsup_{t\to+\infty}\Vert U(t)\Vert_{H^s(\mathbb{R}\times\mathbb{T}^d)}=+\infty.
\end{equation}
\end{theorem}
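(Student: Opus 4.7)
The plan is to follow the two-step scheme of Hani--Pausader--Tzvetkov--Visciglia: first establish a modified scattering between small solutions of \eqref{NLScubique} on $\R\times\T^d$ and solutions of a simpler \emph{resonant system} on $\T^d$, then invoke the existence, inside that resonant system, of trajectories whose $H^s(\T^d)$-norm is unbounded. Since the resonant dynamics governs the asymptotic behavior of the full nonlinear Schr\"odinger flow, growth transfers from the effective system to the genuine NLS solution, producing the desired infinite-cascade orbits.

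First I would set up a functional framework adapted to small-data scattering on the cylinder: a space $X$ controlling simultaneously $H^s$ regularity, a spatial weight in the dispersive variable $x$ (through the Galilean vector field $J=x+2it\partial_x$), and $H^s_y$ regularity along $\T^d$. Passing to the profile $f(t):=e^{-it\Delta}U(t)$ and writing its Duhamel formula, one expands the cubic interaction on the $\T^d$-Fourier side as a sum over quadruples $(p,q,r,n)\in(\Z^d)^4$ with $p-q+r-n=0$, separating the resonant set $\{|p|^2-|q|^2+|r|^2-|n|^2=0\}$ from the rest. The non-resonant contributions oscillate and can be normal-formed away with an additional factor $1/t$, while the resonant contributions, after the logarithmic time change $\tau=\log t$, yield an autonomous cubic Hamiltonian system
\begin{equation*}
i\partial_\tau G=\mathcal{R}[G,G,G]
\end{equation*}
on $\T^d$. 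The core analytic statement is then: for $\|U(0)\|_X$ small enough, $f(t)$ is asymptotic in a norm slightly weaker than $X$ to $G(\log t)$ for some solution $G$ of the resonant system, and conversely every small $G$ arises this way (wave operators).

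Next comes the dynamical input. For $d\geq 2$, the resonant system on $\T^d$ contains the cubic NLS on $\T^2$ as an invariant subsystem, and a Colliander--Keel--Staffilani--Takaoka--Tao-type construction provides, for every $\varepsilon>0$ and every $K>0$, initial data in $H^s(\T^d)$ of norm at most $\varepsilon$ whose resonant orbit reaches $H^s$-norm at least $K$ in finite $\tau$-time. A diagonal choice along a sequence $K_n\to\infty$ upgrades this to a resonant trajectory with $\limsup_{\tau\to\infty}\|G(\tau)\|_{H^s(\T^d)}=+\infty$. Feeding such a $G$ into the wave-operator half of the scattering produces a global NLS solution $U(t)$ with $\|U(0)\|_{H^s}\leq\varepsilon$ and, along the sequence $t_n=e^{\tau_n}$,
\begin{equation*}
\|U(t_n)\|_{L^2_xH^s_y}\geq \tfrac12\|G(\tau_n)\|_{H^s(\T^d)}-o(1)\longrightarrow+\infty.
\end{equation*}
Since $\|U\|_{H^s(\R\times\T^d)}\gtrsim\|U\|_{L^2_xH^s_y}$, this forces $\limsup_t\|U(t)\|_{H^s}=+\infty$, as required.

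The main obstacle is the modified scattering step itself. Closing the bootstrap in $X$ demands sharp multilinear estimates on the full cubic interaction that simultaneously accommodate (i)~the $x$-weight, which commutes badly with $|U|^2U$, (ii)~the derivative loss on $\T^d$ incurred by the normal form used to remove the non-resonant frequencies, and (iii)~the slow, logarithmic-in-time character of the effective evolution, which rules out any gain-of-time shortcut. The balance between these three sources of loss is what forces the large regularity threshold $s\geq 30$. By comparison, the CKSTT-type growth input and the passage from resonant to NLS growth through the wave operator are essentially black-box steps once the scattering theory is in hand.
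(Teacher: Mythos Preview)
This theorem is not proved in the paper: it is quoted from \cite{HPTV} as motivation in the introduction, with no proof supplied. So there is no ``paper's own proof'' to compare against. Your outline is a faithful high-level sketch of the Hani--Pausader--Tzvetkov--Visciglia argument itself: modified scattering to a resonant system via normal forms and the logarithmic time change, then the CKSTT-type growth mechanism inside the resonant system for $d\ge 2$, then transfer of growth through the wave operator. The present paper adapts precisely this scheme to the wave guide equation \eqref{HWS}, the main difference being that the resonant dynamics there is the cubic Szeg\H{o} equation rather than a CKSTT-type system, and the growth input comes from \cite{GGbook} instead of \cite{CKSTT2010}.
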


Unfortunately, these infinite cascades do not occur for $d=1$, actually the dynamics of small solutions is fairly similar on $\R\times\T$ and $\R$. But we may apply their general strategy to the wave guide Schr\"odinger equation, to understand the asymptotic behavior and in particular how this asymptotic behavior is related to {\it resonant dynamics}.

\subsubsection{The half wave equation}
Another motivation is from the study of the so-called {\em half wave} equation \cite{GGHW}. Actually, if we start with a solution $u$ which does not depend on $x$, then it satisfies the following half wave equation
\begin{equation}\label{HW}
i\partial_t u - \vert D_y \vert u = \vert u\vert^2u,\ y\in\T\ .
\end{equation}

The following theorem was proved by G\'erard and Grellier, which tells us the global well-posedness and partially about its large time behavior. The orthogonal projector from $L^2(\T )$ onto $$L^2_+(\T ):=\Big\{u(y)=\sum_{p\ge 0} u_p {\mathrm{e}}^{ipy},\; (u_{p})_{p\ge 0}\in\ell^2 \Big\}\ ,$$
is called the Szeg\H{o} projector and is denoted by $\Pi_+$.
\begin{theorem}\cite{GGHW}
Given $u_0\in H^{\frac 12}(\T )$, there exists a unique solution $u\in C(\R ,H^{\frac 12}(\T ))$ satisfying \eqref{HW}. And if $u_0\in H^s(\T )$ for some $s>\frac 12$, then $u\in C(\R , H^s(\T ))$. 
Moreover, let $s>1$ and $u_0=\Pi_+(u_0)\in L^2_+(\T)\cap H^s(\T)$ with $\Vert u_0\Vert_{H^s}=\varepsilon$, $\varepsilon>0$ small enough.
Denote by  $v$  the solution of the cubic Szeg\H{o} equation\cite{GGASENS,GGINV}
\begin{equation}\label{CauchySzego} i\partial_tv-Dv=\Pi_+(|v|^2v) \ ,\ 
v(0,\cdot)=u_0\ .
\end{equation}
Then, for any $\alpha >0$,  there exists  a constant $c=c_\alpha <1$ so that
\begin{equation}\label{approximation}\Vert u(t)-v(t)\Vert_{H^s}=\mathcal O(\varepsilon^{3-\alpha} )\text{ for }t\le \frac {c_{\alpha }}{\varepsilon^2}\log\frac 1\varepsilon\ .\end{equation}
\end{theorem}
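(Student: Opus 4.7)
The statement has two parts: global well-posedness of (HW) in $H^s(\T)$ for $s\geq \tfrac 12$, and a quantitative long-time comparison between $u$ and the Szeg\H{o} solution $v$. For the first, since $|D_y|$ is dispersionless on $\T$ no Strichartz gain is available; I would instead run a Picard iteration in $C([0,T];H^{1/2})$ using the Sobolev embedding $H^{1/2}(\T)\hookrightarrow L^p$ for every $p<\infty$ and a Brezis--Gallouet--Wainger logarithmic correction, then globalize via conservation of mass and of the Hamiltonian. Propagation of $H^s$ regularity for $s>\tfrac 12$ follows from the tame energy estimate $\tfrac{d}{dt}\|u\|_{H^s}^2\lesssim \|u\|_{L^\infty}^2\|u\|_{H^s}^2$ combined once more with Brezis--Gallouet--Wainger.

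For the approximation, the decisive algebraic fact is that $|D_y|$ and $D$ agree on $L^2_+$, so $v(t)\in L^2_+$ for every $t$ and satisfies $(i\partial_t-|D_y|)v=\Pi_+(|v|^2v)$. The remainder $r:=u-v$ (with $r(0)=0$) therefore solves
\[
(i\partial_t-|D_y|)r=\bigl(|u|^2u-|v|^2v\bigr)+\Pi_-(|v|^2v).
\]
Under a bootstrap $\|u(t)\|_{H^s}\leq 2\varepsilon$ the first summand contributes an $\mathcal O(\varepsilon^2)$ linear perturbation (hence a Gronwall factor $e^{C\varepsilon^2 t}$), while $\Pi_-(|v|^2v)$ is a deterministic source of $H^s$-size $\varepsilon^3$. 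Applied directly this only yields $\|r\|_{H^s}\lesssim \varepsilon^3 t\, e^{C\varepsilon^2 t}$, which is of size $\varepsilon$ on the target time scale---insufficient.

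The improvement comes from observing that $\Pi_-(|v|^2v)$ is purely non-resonant for the half-wave flow: writing $v_n=e^{-itn}w_n$ for $n\geq 0$, the $n$th Fourier coefficient ($n\leq -1$) of the source in the interaction picture is a sum of terms $e^{-2itn}w_{n_1}\bar w_{n_2}w_{n_3}$ over $n_1-n_2+n_3=n$, with resonance phase $-2n\geq 2$. I would therefore perform a Poincar\'e--Dulac normal form, setting $\tilde r:=r-B(v,v,v)$ with
\[
\widehat{B(f,g,h)}(n)=\frac{1}{-2in}\!\!\sum_{\substack{n_1-n_2+n_3=n\\ n_1,n_2,n_3\geq 0}}\!\!f_{n_1}\,\bar g_{n_2}\,h_{n_3}\qquad (n\leq -1),
\]
and zero for $n\geq 0$. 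Since $B$ effectively gains one derivative, the $H^s$ algebra property yields $\|B(v,v,v)\|_{H^s}\lesssim\|v\|_{H^s}^3\lesssim\varepsilon^3$. Substituting $\partial_tv=-iDv-i\Pi_+(|v|^2v)$ into $\partial_tB(v,v,v)$, the non-resonant source is cancelled at leading order and one is left with
\[
(i\partial_t-|D_y|)\tilde r=\mathcal O_{H^s}(\varepsilon^2\|r\|_{H^s})+\mathcal O_{H^s}(\varepsilon^5).
\]
With $\tilde r(0)=-B(v_0,v_0,v_0)=\mathcal O(\varepsilon^3)$, Gronwall now gives $\|\tilde r(t)\|_{H^s}\lesssim (\varepsilon^3+\varepsilon^5 t)\,e^{C\varepsilon^2 t}$, and hence the same bound for $\|r\|_{H^s}$. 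Choosing $c_\alpha$ so small that $Cc_\alpha<\alpha$ yields $\|r(t)\|_{H^s}=\mathcal O(\varepsilon^{3-\alpha})$ for $t\leq c_\alpha\varepsilon^{-2}\log(1/\varepsilon)$, which also closes the bootstrap $\|u\|_{H^s}\leq 2\varepsilon$.

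The principal obstacle, in my view, is the $H^s$-boundedness of $\partial_t B(v,v,v)$ after substitution of the Szeg\H{o} equation: the unbounded factor $Dv$ must be fully absorbed by the single derivative gained from the denominator $-2in$, which requires a careful multilinear product estimate of the form $H^{s-1}\cdot H^s\cdot H^s\hookrightarrow H^{s-1}$ on $\T$ and is what forces the restriction $s>1$ rather than merely $s>\tfrac 12$. The logarithmic loss in the admissible time is precisely the price of trading the Gronwall exponent $Cc\log(1/\varepsilon)$ for an arbitrarily small power $\varepsilon^\alpha$.
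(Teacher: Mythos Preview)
The paper itself does not prove this theorem; it is quoted from \cite{GGHW} as background, with no argument supplied. Your sketch is nonetheless essentially the G\'erard--Grellier strategy: a Poincar\'e--Dulac normal form to eliminate the non-resonant source $\Pi_-(|v|^2v)$, followed by Gronwall on the corrected remainder over the time scale $\varepsilon^{-2}\log(1/\varepsilon)$. So you are on the right track.

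Two corrections to the details. First, the constant in $B$ is off: one wants $\widehat{B}(n)=\dfrac{1}{2n}\sum f_{n_1}\bar g_{n_2}h_{n_3}$ for $n\le -1$ (real denominator), so that under the \emph{linear} flow $i\partial_t v=Dv$ the identity $(i\partial_t-|D_y|)B(v,v,v)=\Pi_-(|v|^2v)$ holds exactly; your $\frac{1}{-2in}$ leaves a spurious factor. Second---and this matters conceptually---the ``principal obstacle'' you single out is not actually an obstacle. The terms $B(Dv,v,v)$, $B(v,Dv,v)$, $B(v,v,Dv)$ coming from the linear part of $\partial_t v$ combine \emph{algebraically} with $|D_y|B(v,v,v)$ to reproduce the source term; that is precisely what $B$ is designed to do, and no multilinear product estimate of the type $H^{s-1}\cdot H^s\cdot H^s\hookrightarrow H^{s-1}$ enters at this step. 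What remains after the cancellation and genuinely requires an estimate is only the quintic contribution $B(\Pi_+(|v|^2v),v,v)+\cdots$, and this is $\mathcal O_{H^s}(\varepsilon^5)$ straightforwardly because $H^s$ is an algebra for $s>\tfrac12$ and $B$ gains one derivative. In short: your Gronwall closure is right, but the normal-form step is cleaner than you suggest, and the restriction $s>1$ is not forced by the mechanism you identify.
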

A similar result is available for the case on the real line $\mathbb{R}$, see O. Pocovnicu \cite{OP}.

The following large time behavior result of the half wave equation comes from the fact that the cubic Szeg\H{o} dynamics which appears as the effective dynamics, admits large time Sobolev norm growth.
\begin{corollary}\label{coro}\cite{GGHW}
Let $s>1$. There exists a sequence of data $u_0^n$ and a sequence of times $t^{n}$
such that, for any $r$,
$$\Vert u_0^n\Vert _{H^r}\rightarrow 0$$
while the corresponding solution of \eqref{HW} satisfies
$$\Vert u^n(t^{n})\Vert _{H^s}\simeq \Vert u_0^n\Vert _{H^s}\, \left (\log \frac 1{\Vert u_0^n\Vert _{H^s}}\right )^{2s-1}\ .$$
\end{corollary}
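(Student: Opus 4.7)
The plan is to combine the approximation estimate \eqref{approximation} with known turbulent dynamics for the cubic Szeg\H{o} equation \eqref{CauchySzego}. First, I would invoke the results of G\'erard--Grellier on Sobolev norm inflation for \eqref{CauchySzego}: for any $s>1$, one can exhibit initial data $w_0 \in L^2_+(\T)\cap H^s(\T)$ (typically drawn from an explicit finite-dimensional family of rational functions on the circle) whose Szeg\H{o} solution $w$ satisfies $\|w(T)\|_{H^s} \gtrsim T^{2s-1}$ for all $T$ up to some large time, while $\|w(T)\|_{H^\sigma}$ remains uniformly bounded on this window for each $\sigma$.

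Second, I would exploit the scaling invariance of the Szeg\H{o} equation: if $w$ solves \eqref{CauchySzego} with datum $w_0$, then $v_\varepsilon(t,y) := \varepsilon\, w(\varepsilon^2 t, y)$ solves it with datum $\varepsilon w_0$. Given a sequence $\varepsilon_n\downarrow 0$, set $u_0^n := \varepsilon_n w_0$, so that $u_0^n$ is of Szeg\H{o} type with $\|u_0^n\|_{H^r}\lesssim \varepsilon_n\to 0$ for every $r$, and pick the time $t^n := T_n/\varepsilon_n^2$ with $T_n := c_\alpha \log(1/\varepsilon_n)$ for some fixed $\alpha\in (0,2)$; this places $t^n$ just inside the validity window of \eqref{approximation}.

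At this time, the scaling yields $\|v_{\varepsilon_n}(t^n)\|_{H^s}=\varepsilon_n\|w(T_n)\|_{H^s}\gtrsim \varepsilon_n T_n^{2s-1}\sim \varepsilon_n(\log 1/\varepsilon_n)^{2s-1}$. The comparison \eqref{approximation} reads $\|u^n(t^n)-v_{\varepsilon_n}(t^n)\|_{H^s}=\mathcal O(\varepsilon_n^{3-\alpha})$, which is negligible with respect to the previous quantity as soon as $\alpha<2$. The triangle inequality therefore gives $\|u^n(t^n)\|_{H^s}\gtrsim \varepsilon_n(\log 1/\varepsilon_n)^{2s-1}$; since $\|u_0^n\|_{H^s}=\varepsilon_n\|w_0\|_{H^s}$, this is precisely the announced amplification $\|u^n(t^n)\|_{H^s}\simeq \|u_0^n\|_{H^s}(\log 1/\|u_0^n\|_{H^s})^{2s-1}$.

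The main difficulty is concentrated in the first step: producing Szeg\H{o} solutions whose $H^s$ norm grows polynomially as $T^{2s-1}$ on a time window long enough to match the logarithmic window $T\leq c_\alpha\log(1/\varepsilon)$ allowed by \eqref{approximation}. This construction is highly nontrivial and relies on the completely integrable structure of \eqref{CauchySzego}, in particular on the Lax pair formalism of G\'erard--Grellier and the explicit flow on finite-dimensional invariant manifolds of rational functions; once this turbulent family is available, the remaining steps are only rescaling and a controlled perturbation via \eqref{approximation}.
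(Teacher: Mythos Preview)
Your reconstruction is essentially correct and matches the approach the paper alludes to (and which is carried out in \cite{GGHW}): use an explicit Szeg\H{o} trajectory with $H^s$-growth of order $T^{2s-1}$, rescale to small data, choose the time at the edge of the logarithmic window in \eqref{approximation}, and absorb the $\mathcal O(\varepsilon^{3-\alpha})$ error by the triangle inequality. The paper itself does not give a proof beyond the single sentence pointing to the Szeg\H{o} growth, so there is nothing further to compare.

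One technical caveat: the scaling $w\mapsto \varepsilon\,w(\varepsilon^2 t,\cdot)$ is \emph{not} a symmetry of \eqref{CauchySzego} as written, because of the linear term $-Dv$. It is a symmetry of the reduced equation $i\partial_t v=\Pi_+(|v|^2v)$. This is harmless since on $L^2_+$ the two are conjugated by the unitary group $e^{-itD}$, which preserves every $H^s$ norm; you should either run the scaling on the reduced equation and then undo the gauge, or simply note that the $H^s$ estimates transfer unchanged. With that adjustment the argument goes through exactly as you describe.
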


\begin{remark}In the statement above, one may observe that there exists norm growth, but $\Vert u^n(t^{n})\Vert _{H^s}$ stays still small. In fact, it is possible to show that for $s>1$, there exists a sequence of $u^n$ solutions to the half wave equation \eqref{HW} such that \cite{OPcomm}
\begin{equation}
\Vert u_0^n\Vert_{H^s}\rightarrow0,\ \Vert u^n(t^{n})\Vert _{H^s}\rightarrow\infty\ .
\end{equation}
Indeed, one may just take some large integers $N_n=\Big[\Big(\Vert \widetilde{u_0^n}\Vert_{H^s}\Vert \widetilde{u^n}(t^n)\Vert_{H^s}\Big)^{-\frac{1}{1+2s}}\Big]$, set $u_0^n=N_n^{\frac12}\widetilde{u_0^n}(N_n^{\frac12}y)$ with $\widetilde{u_0^n}$ given as in Corollary~\ref{coro}, then we may write the related solution as  $u_0^n=N_n^{\frac12}\widetilde{u^n}(N_nt,N_n^{\frac12}y)$. 
\end{remark}

The existence of a solution to the half wave equation \eqref{HW} satisfying
\begin{equation}
\Vert u_0\Vert_{H^s}\leq\varepsilon,\quad\lim\sup_{t\to\infty}\Vert u(t)\Vert_{H^s}=\infty\ ,
\end{equation}
is still an open problem. Though this problem is still open for the half wave equation, we are going to solve it for the wave guide Schr\"odinger equation \eqref{HWS}.

\medskip
\subsection{Main results}
The aim of this paper is to describe the large time behavior of the wave guide Schr\"odinger equation \eqref{HWS} for small smooth data. Thoughout this paper, we always assume the initial data satisfy
\begin{equation}\label{Initial}
U_0(x,y+\pi)=-U_0(x,y)\ .
\end{equation}
A direct consequence is that $U_0$ only admits odd Fourier modes on the direction $y$, which is of helpful importance in the study of the resonant system, as we will see later in section 4. We then show that the asymptotic dynamics of small solutions to \eqref{HWS} is related to that of solutions of the resonant system
\begin{equation}\label{RSS}
\begin{split}
&i\partial_t G_{\pm}(t)=\mathcal{R}[G_{\pm}(t), G_{\pm}(t), G_{\pm}(t)]\ ,\\
&\mathcal{F}_{\R}\mathcal{R}[G_{\pm}, G_{\pm}, G_{\pm}](\xi,y)= \Pi_{\pm}(\vert \widehat{G}_\pm\vert^2\widehat{G}_\pm)(\xi,y)\ .
\end{split}
\end{equation}
Here $\widehat G(\xi, \cdot)=\mathcal F_{\R} G (\xi, \cdot)$, $\Pi_+$ is the Szeg\H{o} projector onto the non-negative Fourier modes, $\Pi_-:=\mathrm{Id}-\Pi_+$, and $G_\pm:=\Pi_\pm(G)$. Noting that the dependence on $\xi$ is merely parametric, the above system is none other than the resonant system for the cubic half wave equation on $\T$, which is the cubic Szeg\H{o} equation. 

Throughout this article, we assume $N\geq13$ is an arbitrary integer, and $\delta<10^{-3}$. Our main results on the modified scattering and the existence of a wave operator are as below, where the norms of Banach spaces $S$ and $S^+$ are defined as
\begin{equation}
\begin{split}
\Vert F\Vert_{S}:=&\Vert F\Vert_{H^N_{x,y}}+\Vert xF\Vert_{L^2_{x,y}},\quad
\Vert F\Vert_{S^+}:=\Vert F\Vert_S+\Vert (1-\partial_{xx})^4F\Vert_{S}+\Vert xF\Vert_{S}.
\end{split}
\end{equation}

\begin{theorem}\label{MS}
There exists $\varepsilon=\varepsilon(N)>0$ such that if $U_0\in S^+$ satisfies
\begin{equation*}
\Vert U_0\Vert_{S^+}\le\varepsilon,
\end{equation*}
and if $U(t)$ solves \eqref{HWS} with initial data $U_0$, then $U\in C([0,+\infty): S)$ exists globally and exhibits modified scattering to its resonant dynamics \eqref{RSS} in the following sense: there exists $G_0\in S$ such that if $G(t)$ is the solution of \eqref{RSS} with initial data $G(0)=G_0$, then
\begin{equation*}
\Vert U(t)-{\mathrm{e}}^{it\mathcal{A}}G(\pi\ln t)\Vert_{S}\to 0\ \text{ as }t\to+\infty.
\end{equation*}
\end{theorem}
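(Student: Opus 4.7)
My plan is to follow the Hani--Pausader--Tzvetkov--Visciglia scheme, adapted to the operator $\mathcal{A}=\partial_{xx}-|D_y|$. First I introduce the profile $F(t):={\mathrm{e}}^{-it\mathcal{A}}U(t)$, so that $U\in C([0,\infty);S)$ is controlled through $F$, and $F$ satisfies
\begin{equation*}
i\partial_t F(t)={\mathrm{e}}^{-it\mathcal{A}}\bigl(|{\mathrm{e}}^{it\mathcal{A}}F(t)|^{2}{\mathrm{e}}^{it\mathcal{A}}F(t)\bigr).
\end{equation*}
Taking the Fourier transform in $x$ and expanding in Fourier modes in $y$, the right hand side becomes an oscillatory integral over the simplex $\xi_1-\xi_2+\xi_3=\xi$, $n_1-n_2+n_3=n$ with phase
\begin{equation*}
\Phi=\xi_1^{2}-\xi_2^{2}+\xi_3^{2}-\xi^{2}+|n_1|-|n_2|+|n_3|-|n|.
\end{equation*}
Using the identity $\xi_1^{2}-\xi_2^{2}+\xi_3^{2}-\xi^{2}=-2(\xi_1-\xi)(\xi_3-\xi)$ one isolates the stationary set $\{\xi_1=\xi\}\cup\{\xi_3=\xi\}$ in the $\xi$-variables. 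Standard stationary phase on $\R^{2}$ then yields an asymptotic expansion of the form
\begin{equation*}
i\partial_t\widehat F(t,\xi,y)=\frac{\pi}{t}\,\Pi_{+}\bigl(|\widehat F|^{2}\widehat F\bigr)(t,\xi,y)+\frac{\pi}{t}\,\Pi_{-}\bigl(|\widehat F|^{2}\widehat F\bigr)(t,\xi,y)+\mathcal{E}(t,\xi,y),
\end{equation*}
where the Szeg\H{o} projectors $\Pi_{\pm}$ arise because the antiperiodicity \eqref{Initial} restricts all Fourier modes in $y$ to be odd, so the time-resonant relation $|n_1|-|n_2|+|n_3|=|n|$ under the constraint $n_1-n_2+n_3=n$ forces the sign pattern of $(n_1,n_2,n_3,n)$ to split into the two classes handled by $\Pi_{+}$ and $\Pi_{-}$. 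This is exactly the operator $\mathcal{R}$ appearing in \eqref{RSS}, so the leading order equation for $F$ is $i\partial_t F=\tfrac{\pi}{t}\mathcal{R}[F,F,F]+\mathcal{E}$.

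\textbf{A priori bounds and bootstrap.} Next I would set up the iteration space. Following the HPTV framework I work on a dyadic time interval $[T,2T]$ with a norm of the type
\begin{equation*}
\|F\|_{X_T}:=\sup_{t\in[T,2T]}\Bigl\{\|F(t)\|_{S}+(1+t)^{-\delta}\|F(t)\|_{S^{+}}+(1+t)^{1-\delta}\|\partial_t F(t)\|_{S}\Bigr\}
\end{equation*}
(with the precise weights dictated by how many derivatives are needed to beat the $1/t$ pre-factor). The local existence in $S$ is routine since $S\hookrightarrow L^{\infty}_{x,y}$ for $N\ge 13$. The crux is a bootstrap argument on $\|F\|_{X_T}\lesssim\varepsilon$ for all $T\ge 1$. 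The $\|F\|_S$ bound is obtained by energy estimates using that $\mathcal{R}$ preserves $H^{N}_{x,y}$ and $L^{2}$-mass (so the resonant contribution is a phase rotation in the appropriate norm, with an integrable logarithmic prefactor when combined with $1/t$), and by estimating the remainder $\mathcal{E}$ in $S$. The error term $\mathcal{E}$ contains non-resonant quadruples; on those terms one integrates by parts in time, which produces $1/\Phi$ but also a factor of $x$ acting on $F$, precisely what the weight $\|xF\|_{L^{2}}$ in $S$ is designed to absorb. Growth of the $S^{+}$ norm is allowed at rate $t^{\delta}$, again by energy estimates and the trilinear estimates for $\mathcal{R}$ and $\mathcal{E}$. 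Globally, this closes the bootstrap for $\|U_0\|_{S^{+}}\le\varepsilon$ small enough.

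\textbf{Modified scattering and construction of $G_{0}$.} Once the $X_T$ bound is established uniformly in $T$, I change variables $\tau=\pi\log t$ so that $\frac{d\tau}{dt}=\frac{\pi}{t}$ and the leading part of the profile equation becomes exactly the resonant system \eqref{RSS} in the time $\tau$. Define
\begin{equation*}
\widetilde G(\tau):=F({\mathrm{e}}^{\tau/\pi}),\qquad\tau\ge 0,
\end{equation*}
so that $i\partial_\tau\widetilde G=\mathcal{R}[\widetilde G,\widetilde G,\widetilde G]+\frac{{\mathrm{e}}^{\tau/\pi}}{\pi}\mathcal{E}({\mathrm{e}}^{\tau/\pi})$. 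The bootstrap estimate on $\mathcal{E}$ gives $\|\tfrac{t}{\pi}\mathcal{E}(t)\|_{S}\in L^{1}_{dt/t}([1,\infty))$, i.e.\ the forcing term is integrable in $\tau$ in the norm $S$. Let $G(\tau)$ be the unique solution of \eqref{RSS} starting from the initial datum $G_{0}\in S$ obtained as
\begin{equation*}
G_{0}:=\widetilde G(0)-i\int_{0}^{\infty}\mathcal{R}[\widetilde G,\widetilde G,\widetilde G](\sigma)\,d\sigma+i\int_{0}^{\infty}i\partial_\tau\widetilde G(\sigma)\,d\sigma,
\end{equation*}
equivalently as the limit in $S$ of $G(\tau)$ solving \eqref{RSS} backward from $\widetilde G(\tau)$ and letting $\tau\to\infty$ (the Cauchy criterion follows from the $L^{1}$ bound on the forcing). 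A Gronwall-type argument on the difference $\widetilde G(\tau)-G(\tau)$, using the local Lipschitz property of $\mathcal{R}$ on bounded sets of $S$, yields $\|\widetilde G(\tau)-G(\tau)\|_{S}\to 0$ as $\tau\to\infty$, which, undoing the change of variables and going back from $F$ to $U$, is exactly the modified scattering statement of Theorem~\ref{MS}.

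\textbf{Main obstacle.} The hardest step is the non-resonant error bound on $\mathcal{E}$ in $S$ and $S^{+}$: one must show that after the stationary phase extraction, the remainder is actually integrable in time, which requires a careful decomposition into frequency regions (low/high $\xi$, resonant/far-from-resonant), a normal form / integration by parts in $t$ that uses the weight $x$ (hence the appearance of $\|xF\|_{L^{2}}$ and $\|xF\|_{S}$ in $S$ and $S^{+}$), and trilinear estimates that carefully distribute derivatives between the three factors of $F$. The specific half-wave dispersion relation $|n|$ on the $y$-torus means that the $y$-frequencies give no extra dispersion, so essentially all the smoothing has to come from the $\xi$-integration: verifying that the HPTV trilinear machinery still closes with this weaker dispersion (and with the constraint to odd modes imposed by \eqref{Initial}) is where the bulk of the technical work lies.
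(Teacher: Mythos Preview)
Your overall architecture (profile $F={\mathrm{e}}^{-it\mathcal{A}}U$, extraction of $\tfrac{\pi}{t}\mathcal{R}$ plus an error, bootstrap in a weighted norm, then scattering in the logarithmic time $\tau=\pi\ln t$) matches the paper's. However, there is a genuine gap at the heart of your bootstrap: you claim that ``$\mathcal{R}$ preserves $H^{N}_{x,y}$ and $L^{2}$-mass (so the resonant contribution is a phase rotation in the appropriate norm)''. This is false. The resonant system is a $\xi$-parametrized cubic Szeg\H{o} equation, and the Szeg\H{o} flow does \emph{not} conserve $H^{s}$ norms for $s>\tfrac12$; indeed, the whole point of the paper (Theorem~\ref{BlowUp}) is that those norms can grow without bound. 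Consequently, with only $\|F\|_S$ and $\|F\|_{S^+}$ in your iteration norm, the estimate $\|\mathcal{R}[F,F,F]\|_S\lesssim\|F\|_S^3$ gives $\partial_t\|F\|_S\lesssim t^{-1}\|F\|_S^3$, which blows up at $\log t\sim\varepsilon^{-2}$; the bootstrap does not close globally. The same problem reappears in your final Gronwall: the Lipschitz constant of $\mathcal{R}$ on bounded sets of $S$ grows like $\|F\|_S^2\sim t^{2\delta}$, which is not integrable in $\tau$.

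The paper repairs this by introducing a third norm, the $Z$-norm built on the Besov space $B^{1}_{1,1}(\T)$ in the $y$-variable. This norm is (essentially) conserved by the Szeg\H{o} flow, via the Lax pair structure and Peller's theorem identifying $B^{1}_{1,1}$ with the trace class of the Hankel operator $H_v$. The bootstrap then runs in the norm $X_T$ of \eqref{DefX}, which carries a uniformly bounded $Z$-component; the key trilinear bound (Lemma~\ref{Res}, \eqref{n01}) reads $\|\mathcal{N}_0^t[F,F,F]\|_S\lesssim t^{-1}\|F\|_{\widetilde Z_t}^2\|F\|_S$, so that the resonant contribution produces only $t^{C\varepsilon^2}$ growth of $\|F\|_S$. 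Likewise, the stability estimate for the resonant flow (Proposition~\ref{continuity}) uses $\|A\|_Z,\|B\|_Z$ rather than $\|A\|_S,\|B\|_S$ in the exponent. For the construction of $G_0$, the paper does not take a direct limit but rather solves the resonant system backward from data $F(T_n)$ at the discrete times $T_n={\mathrm{e}}^{n/\pi}$, shows $\{\widetilde G_n(0)\}$ is Cauchy in $S$, and lets $G_0$ be the limit; this sidesteps the infinite-time Gronwall you invoke. Your sketch becomes correct once you insert the $Z$-norm and replace the false $H^N$-conservation by the genuine $B^1$-conservation; without it the argument fails at the bootstrap step.
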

\begin{remark}
The Cauchy problem of our wave guide Schr\"odinger system \eqref{HWS} in the classical Sobolev space is not easy, neither by energy estimates nor by Strichartz estimates, since its Hamiltonian energy lies on the Sobolev space $H_x^1L_y^2\cap L_x^2H_y^{1/2}$. However, by the Theorem~\ref{MS} above, we can deduce directly the global well-posedness with small initial data in $S^+$.
\end{remark}
\begin{theorem}\label{MWO}
There exists $\varepsilon=\varepsilon(N)>0$ such that if $G_0\in S^+$ satisfies
\begin{equation*}
\Vert G_0\Vert_{S^+}\le\varepsilon,
\end{equation*}
$G(t)$ solves \eqref{RSS} with initial data $G_0$, then there exists $U\in C([0,\infty):S)$ a solution of \eqref{HWS} such that
\begin{equation*}
\begin{split}
\Vert U(t)-{\mathrm{e}}^{it\mathcal{A}}G(\pi\ln t)\Vert_{S}\to0\ \text{ as }\,t\to+\infty.
\end{split}
\end{equation*}
\end{theorem}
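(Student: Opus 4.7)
The plan is to construct the modified wave operator by the standard backward-in-time limiting procedure used throughout the Hani--Pausader--Tzvetkov--Visciglia program: I pick a sequence of final times $T_n\to+\infty$, solve \eqref{HWS} backward from the terminal data
$$U_n(T_n)={\mathrm{e}}^{iT_n\mathcal{A}}G(\pi\ln T_n),$$
and show the sequence $\{U_n\}$ converges on $[T_*,\infty)$ for some $T_*=T_*(\varepsilon)$ independent of $n$. Once the limit $U$ is obtained on $[T_*,\infty)$, local well-posedness of \eqref{HWS} for data in $S$ extends it to $[0,\infty)$.

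The first substantive step is to produce uniform a priori bounds. Because $\|G(0)\|_{S^+}\le\varepsilon$ and the resonant system \eqref{RSS} preserves (or at worst slowly grows) the relevant $S^+$-type norms of $G$, the terminal data satisfy $\|U_n(T_n)\|_S\lesssim\varepsilon$. I then re-run the bootstrap used in Theorem~\ref{MS} in the backward direction on the interval $[T_*,T_n]$: this produces bounds of the form $\|U_n\|_{X_{[T_*,T_n]}}\le 2\varepsilon$ in the workspace norm $X$ controlling the $S$-norm, together with the key quantitative comparison
$$\sup_{t\in[T_*,T_n]}\,\bigl\|U_n(t)-{\mathrm{e}}^{it\mathcal{A}}G(\pi\ln t)\bigr\|_{S}\ \lesssim\ t^{-\kappa}$$
for some $\kappa>0$. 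This decay is not automatic from the modified scattering theorem as stated, but it is produced by the same integration-by-parts/stationary-phase analysis that justifies the extraction of the resonant equation from \eqref{HWS} for the profile $F(t)={\mathrm{e}}^{-it\mathcal{A}}U(t)$; the nonresonant contributions to $i\partial_t F$ are integrable in time with a polynomial gain.

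Next I establish the Cauchy property. For $m>n$, set $W=U_m-U_n$. It solves a trilinear perturbed equation whose nonlinearity is controlled by the uniform bound from Step~2. The initial Cauchy time is $t=T_n$, where
$$W(T_n)=U_m(T_n)-{\mathrm{e}}^{iT_n\mathcal{A}}G(\pi\ln T_n),$$
and by the decay estimate above this is $\mathcal{O}(T_n^{-\kappa})$ in $S$. A Gronwall-type inequality on $[T_*,T_n]$, using the smallness of $U_n$ and $U_m$ in $X$, then yields $\|W\|_{C([T_*,T_n];S)}\lesssim T_n^{-\kappa'}$. Hence $\{U_n\}$ is Cauchy in $C([T_*,T];S)$ for every fixed $T\ge T_*$, and its limit $U$ automatically inherits the same decay estimate against the profile, which is exactly the conclusion $\|U(t)-{\mathrm{e}}^{it\mathcal{A}}G(\pi\ln t)\|_S\to 0$.

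The main obstacle is Step~2: extracting a \emph{quantitative} rate in the comparison between $U_n$ and its asymptotic profile, uniformly in $n$, when running the equation backward. In the forward direction (Theorem~\ref{MS}) this rate comes essentially for free from the bootstrap; backward, one has to make sure that the nonresonant oscillatory integrals still give integrable-in-time remainders and that the bootstrap functional is genuinely monotone in the backward variable. Once this rate $t^{-\kappa}$ is in hand, both the uniform bound and the Cauchy estimate follow by soft arguments, and the wave operator is constructed.
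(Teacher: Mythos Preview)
Your approach is a valid alternative, but it differs genuinely from the paper's. The paper does \emph{not} construct a sequence of backward solutions and pass to the limit. Instead, it sets $K(t)=e^{-it\mathcal{A}}U(t)-G(\pi\ln t)$ and applies the contraction mapping theorem directly to
\[
\Phi(K)(t)=-i\int_t^\infty\Bigl\{\mathcal{N}^\sigma[K+G,K+G,K+G]-\tfrac{\pi}{\sigma}\mathcal{R}[G,G,G]\Bigr\}\,d\sigma
\]
in a weighted space $\mathfrak{A}$ whose norm already encodes the decay $(1+|t|)^\delta\|K(t)\|_S$. The integrand is split as $\mathcal{E}^\sigma[G,G,G]+\mathcal{L}^\sigma[K,G]+\mathcal{Q}^\sigma[K,G]$; the source term $\int_t^\infty\mathcal{E}^\sigma[G,G,G]\,d\sigma$ is small in $\mathfrak{A}$ thanks to the $X^+$ bounds on $G$ from Proposition~\ref{Gnorm} (this is precisely where the hypothesis $G_0\in S^+$, not merely $S$, enters), and the linear and quadratic terms give the contraction. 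The fixed point \emph{is} the decay rate you are looking for, so your ``main obstacle'' simply does not arise.

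Your backward-limit scheme can be made to work, but it is strictly more laborious: you must first run a backward bootstrap for $F_n$ in $X_T^+$ (not just $X_T$, since the improved estimate $T^{2\delta}\|\int_{T/2}^T\mathcal{E}_j\,dt\|_S\lesssim 1$ of Proposition~\ref{Nonlinearity} requires $X^+$ control), then derive the rate for $K_n=F_n-G$ by essentially integrating the same decomposition over $[t,T_n]$ instead of $[t,\infty)$, and finally do a Cauchy argument. In other words, each of your steps ultimately unpacks to the same estimates the paper feeds once into the Banach fixed-point theorem. What the paper's route buys is economy: one contraction replaces your three-step program (uniform bounds, rate, Cauchy). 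What your route buys is a slightly more constructive flavor and the avoidance of the somewhat delicate space $\mathfrak{A}$; but you should be explicit that $S^+$ control propagates to $G$ (Proposition~\ref{Gnorm}) and that your backward bootstrap must be in $X_T^+$, otherwise the rate $t^{-\kappa}$ cannot be extracted from Proposition~\ref{Nonlinearity}.
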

Theorem~\ref{MWO} combined with the large time behavior of the cubic Szeg\H{o} equation, leads to the infinite cascades result.
\begin{theorem}\label{BlowUp}
Given $N\geq13$, then for any $\varepsilon>0$, there exists $U_0\in S^+$ with $\Vert U_0\Vert_{S^+}\leq\varepsilon$, such that the corresponding solution to \eqref{HWS} satisfies
\begin{equation}
\limsup_{t\to\infty}\Vert U(t)\Vert_{L_x^2H_y^s}=\infty\ ,\quad \forall s> 1/2\ . 
\end{equation} 
\end{theorem}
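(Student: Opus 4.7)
The strategy is to combine Theorem~\ref{MWO} with the Sobolev norm growth for the cubic Szeg\H{o} equation established by G\'erard--Grellier in \cite{GG2015arxiv}. Since $e^{it\mathcal{A}}$ is a unitary Fourier multiplier, it is isometric on $L^2_x H^s_y$, so for any $s\le N$ Theorem~\ref{MWO} yields
$$\bigl|\|U(t)\|_{L^2_x H^s_y}-\|G(\pi\ln t)\|_{L^2_x H^s_y}\bigr|\;\le\;\|U(t)-e^{it\mathcal A}G(\pi\ln t)\|_{S}\;\longrightarrow\;0.$$
Thus it suffices to exhibit, for every $\varepsilon>0$, a datum $G_0\in S^+$ with $\|G_0\|_{S^+}\le\varepsilon$ whose resonant flow $G(\tau)$ satisfies $\limsup_{\tau\to\infty}\|G(\tau)\|_{L^2_x H^s_y}=\infty$ for every $s>1/2$.

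I would look for $G_0$ in tensor form $G_0(x,y)=\phi(x)w_0(y)$, with $\phi\in\mathcal S(\R_x)$ and $w_0$ supported on non-negative \emph{odd} Fourier modes of $\T$, so as to respect the parity condition \eqref{Initial} and the projection $\Pi_+$. The key algebraic point is that the resonant system \eqref{RSS} admits the separable ansatz
$$\widehat G(t,\xi,y)\;=\;\widehat\phi(\xi)\;\tilde v\bigl(t|\widehat\phi(\xi)|^{2},y\bigr),$$
where $\tilde v(\tau,y)$ solves the ``interaction picture'' Szeg\H{o} equation $i\partial_\tau\tilde v=\Pi_+(|\tilde v|^{2}\tilde v)$ with $\tilde v(0,\cdot)=w_0$; this is verified by a direct substitution using $\Pi_+(|\widehat\phi|^{2}\widehat\phi\,|\tilde v|^{2}\tilde v)=|\widehat\phi|^{2}\widehat\phi\,\Pi_+(|\tilde v|^{2}\tilde v)$. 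The unitary change of unknown $\tilde v(\tau)=e^{i\tau D}w(\tau)$ conjugates the interaction picture to the genuine cubic Szeg\H{o} equation \eqref{CauchySzego} for $w$ while preserving every $H^s_y$ norm, and Parseval in $\xi$ yields the identity
$$\|G(t)\|_{L^2_x H^s_y}^{2}\;\simeq\;\int_{\R}|\widehat\phi(\xi)|^{2}\,\|w\bigl(t|\widehat\phi(\xi)|^{2}\bigr)\|_{H^s_y}^{2}\,d\xi.$$

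To force this integral to diverge, I would take $\widehat\phi$ to be a smooth bump, compactly supported in $\R_\xi$ and identically equal to $1$ on some interval $I$ of positive length; then $\phi$ is a Paley--Wiener function, hence in $\mathcal S(\R_x)\subset S^+_x$, and an overall prefactor $\varepsilon_1$ makes $\|\phi\|_{S^+}=O(\varepsilon_1)$ arbitrarily small. Restricting the above identity to $\xi\in I$ gives the lower bound
$$\|G(t)\|_{L^2_x H^s_y}^{2}\;\gtrsim\;\varepsilon_1^{2}\,|I|\,\|w(\varepsilon_1^{2} t)\|_{H^s_y}^{2}.$$
Using \cite{GG2015arxiv} on the Szeg\H{o}-invariant subspace of odd non-negative Fourier modes (and, if needed, the scalar rescaling $w_0\mapsto \lambda w_0$, which sends Szeg\H{o} solutions to $\lambda w(\lambda^{2}\tau)$ and preserves unboundedness of the $H^s$-norm), I select $w_0$ with $\|w_0\|_{H^N_y}$ as small as we wish whose Szeg\H{o} flow satisfies $\limsup_{\tau\to\infty}\|w(\tau)\|_{H^s_y}=\infty$ for every $s>1/2$. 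Choosing $\varepsilon_1$ and $\|w_0\|_{H^N_y}$ small enough that $\|G_0\|_{S^+}\le\varepsilon$ closes the argument.

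The only delicate point is the $\xi$-dependent time dilation in the ansatz: the factor $|\widehat\phi(\xi)|^{2}$ runs the Szeg\H{o} clock at a different speed for each frequency $\xi$, and had $|\widehat\phi|$ varied genuinely across its support, one would have to integrate the a priori wildly oscillating function $\tau\mapsto\|w(\tau)\|_{H^s_y}^{2}$ against the push-forward of $|\widehat\phi|^{2}d\xi$, with no obvious reason for the divergence to survive. Choosing $|\widehat\phi|$ to be exactly constant on a set of positive measure bypasses this obstacle entirely, and is compatible with $\phi\in S^+_x$ because we require $\phi$ merely to be Schwartz in $x$ rather than analytic.
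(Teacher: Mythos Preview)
Your proof is correct and follows essentially the same strategy as the paper: both take a tensor-product datum $G_0(x,y)=\check\varphi(x)\,g(y)$ with $\widehat\varphi\equiv 1$ on an interval $I$, exploit the separable ansatz $\widehat G_p(t,\xi)=\widehat\varphi(\xi)\,a_p(|\widehat\varphi(\xi)|^2 t)$ for the resonant flow, restrict to $\xi\in I$ to obtain the lower bound $\|G(t)\|_{L^2_xH^s_y}\geq |I|^{1/2}\|v(t)\|_{H^s_y}$, and then invoke the G\'erard--Grellier unboundedness theorem for the Szeg\H{o} flow together with the modified wave operator (Theorem~\ref{MWO}). Your explicit discussion of the $\xi$-dependent time dilation, and of why choosing $|\widehat\varphi|$ constant on a set of positive measure is the device that sidesteps it, makes precise a point the paper leaves implicit.
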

\begin{remark}\mbox{}\\
\noindent {\bf 1.} It is likely there exists a dense $G_\delta$ set in an appropriate space containing initial data which lead to infinite cascade as above. A proof of this would involve more technicalities and we will not discuss it in this paper. 

\noindent {\bf 2.} Compared to the results in \cite{HPTV}, the unbounded Sobolev norms in our theorem are just above the energy norm.
\end{remark}
\subsection{Organization of the paper}
In section 2, we introduce the notation used in this paper. In section 3, we study the structure of the non-linearity, and establish the decomposition proposition, which is of crucial importance. We decompose the non-linearity $\mathcal{N}^t$ into a combination of the resonant part and a remainder,
\begin{align*}
\mathcal{N}^t[F,G,H]=\frac{\pi}{t}\mathcal{R}[F,G,H]+\mathcal{E}^t[F,G,H]\ .
\end{align*}
 In section 4, we study the resonant system and its large time cascade, which is similar to the cubic Szeg\H{o} equation as above. In section 5, we construct the modified wave operator and prove Theorem~\ref{MWO} and Theorem~\ref{MS}. Later in this section, we prove the large time blow up result, Theorem~\ref{BlowUp}. Finally in section 6, we present a lemma that will allow us to transfer $L^2$ estimates on operators into estimates in $S$ and $S^+$ norms.


\section{Preliminary}
\subsection{Notation}
We will follow the notation of \cite{HPTV}, $\T:=\R/(2\pi\Z)$, the inner product $(U,V):=\int_{\R\times\T} U\overline{V} dx dy$ for any $U,V\in L^2(\R\times\T)$. We will use the lower-case letter to denote functions $f:\R\to\C$ and the capital letters to denote functions $F:\R\times\T\to\C$, and calligraphic letters denote operators, except for the Littlewood-Paley operators and dyadic numbers which are capitalized most of the time.

We use a different notation to denote Fourier transform on different space variables. The Fourier transform on $\R$ is defined by
\begin{equation*}
\widehat{g}(\xi):=\mathcal{F}_x(g)(\xi)=\int_{\R}{\mathrm{e}}^{-ix\xi}g(x)dx\ .
\end{equation*}
Similarly, if $U(x,y)$ depends on $(x,y)\in\R\times\T$, $\widehat{U}(\xi,y)$ denotes the partial Fourier transform in $x$. The Fourier transform of $h:\T\to\C$ is,
\begin{equation*}
h_p:=\mathcal{F}_y(h)(p)=\frac{1}{2\pi}\int_{\T}h(y){\mathrm{e}}^{-ipy}dy,\qquad p\in\Z\ ,
\end{equation*}
and this also extends to $U(x,y)$. Finally, we define the full Fourier transform on the cylinder $\R\times\T$
\begin{equation*}
\left(\mathcal{F} U\right)(\xi,p)=\frac{1}{2\pi}\int_{\T}\widehat{U}(\xi,y){\mathrm{e}}^{-ipy}dy=\widehat{U}_p(\xi)\ .
\end{equation*}

We will often use Littlewood-Paley projections. For the full frequency space, these are defined as follows with $N$ as a dyadic integer.
\begin{equation*}
\begin{split}
\left(\mathcal{F} P_{\le N} U\right)(\xi,p)=\varphi(\frac{\xi}{N})\varphi(\frac{p}{N})\left(\mathcal{F} U\right)(\xi,p)\ ,
\end{split}
\end{equation*}
where $\varphi\in C^\infty_c(\R)$, $\varphi(x)=1$ when $\vert x\vert\le 1$ and $\varphi(x)=0$ when $\vert x\vert\ge 2$. We then also define
\begin{equation}
\phi(x)=\varphi(x)-\varphi(2x)
\end{equation}
and
\begin{equation}
P_N=P_{\le N}-P_{\le N/2},\quad P_{\geq N}=1-P_{\leq N}\ .
\end{equation}

Sometimes we concentrate on the frequency in $x$ only, and we therefore define
\begin{equation*}
\begin{split}
\left(\mathcal{F} Q_{\le N}U\right)(\xi,p)=\varphi(\frac{ \xi}{N})\left(\mathcal{F} U\right)(\xi,p)\ ,
\end{split}
\end{equation*}
and define $Q_N$ similarly. By a slight abuse of notation, we will consider $Q_N$ indifferently as an operator on functions defined on $\R\times\T$ and on $\R$.  While we consider the frequency in 
$y$ we will use notation $\Delta_N$ which means
\begin{equation}\label{deltaj}
\left(\mathcal{F}_y \Delta_Nh\right)(p)=\phi(\frac{p}{N})h_p\ .
\end{equation}
We shall use the following commutator estimate which is a direct consequence of the definition,
\begin{equation}\label{comm}
\|[Q_N,x]\|_{L^2_x\rightarrow L^2_x}\lesssim N^{-1}\ .
\end{equation}

We will use the following sets corresponding to momentum and resonance level sets:
$$\mathcal{M}:=\{(p_0,p_1,p_2,p_3)\in\Z^4:\ p_0-p_1+p_2-p_3=0\}\ ,$$
$$\Gamma_{\omega}:=\{(p_0,p_1,p_2,p_3)\in\Z^4:\ \vert p_0\vert-\vert p_1\vert+\vert p_2\vert-\vert p_3\vert=\omega\}\ .$$

\subsection{The non-linearity}
Let us write a solution of \eqref{HWS} as
$$U(x,y,t)=\sum\limits_{p\in\Z}{\mathrm{e}}^{ipy}e^{-it|p|}\big({\mathrm{e}}^{it\partial_{xx}}F_p(t)\big)(x):={\mathrm{e}}^{it\mathcal{A}}(F(t))\ ,$$
with $\mathcal{A}=\partial_{xx}-\vert D_y\vert$.
We then see that $U$ solves \eqref{HWS} if and only if $F$ solves
\begin{equation}\label{edno}
i\partial_t F(t) = {\mathrm{e}}^{-it\mathcal{A}}\Big({\mathrm{e}}^{it\mathcal{A}}F(t)\cdot {\mathrm{e}}^{-it\mathcal{A}}\overline{F(t)}\cdot{\mathrm{e}}^{it\mathcal{A}}F(t)\Big)\ .
\end{equation}
We denote the non-linearity in \eqref{edno} by $\mathcal{N}^t[F(t),F(t),F(t)]$, where the trilinear form $\mathcal{N}^t$ is defined by
$$\mathcal{N}^t[F,G,H]:={\mathrm{e}}^{-it\mathcal{A}}\Big({\mathrm{e}}^{it\mathcal{A}}F\cdot {\mathrm{e}}^{-it\mathcal{A}}\overline{G}\cdot{\mathrm{e}}^{it\mathcal{A}}H\Big)\ .$$
Now, we can compute the Fourier transform of the last expression
\begin{equation}\label{fourierN}
\mathcal{F}\mathcal{N}^t[F,G,H](\xi,p)=\sum_{(p,q,r,s)\in\mathcal{M}}{\mathrm{e}}^{it\left(\vert p\vert-\vert q\vert+\vert r\vert-\vert s\vert\right)}
\mathcal{F}_x\big(\mathcal{I}^t[F_q,G_r,H_s]\big)(\xi)\ ,
\end{equation}
where
\begin{equation}\label{DefOfI}
\mathcal{I}^t[f,g,h]:=\mathcal{U}(-t)\Big(\mathcal{U}(t)f\, \overline{\mathcal{U}(t)g}\,\mathcal{U}(t)h \Big),\ \mathcal{U}(t)={\mathrm{e}}^{it\partial_{xx}}\ .
\end{equation}
One verifies that
$$
\mathcal{F}_x\big(\mathcal{I}^t[f,g,h]\big)(\xi)=\int_{\mathbb{R}^2}{\mathrm{e}}^{it2\eta\kappa}\widehat{f}(\xi-\eta)\overline{\widehat{g}}(\xi-\eta-\kappa)\widehat{h}(\xi-\kappa)d\kappa d\eta\ .
$$

\subsection{Norms}
The following Sobolev norms will be used in the whole paper. For sequences $a:=\{a_p:\ p\in\Z\}$, we define the following norm,
\begin{equation*}
\Vert a \Vert_{h^s_p}^2:=\sum_{p\in\Z}\left[1+\vert p\vert^2\right]^s\vert a_p\vert^2\ .
\end{equation*}

The Besov space $B^1=B^1_{1,1}(\T )$ is defined as the set of functions $f$ such that $\Vert f\Vert_{B^1_{1,1}}$ is finite, where $$
\Vert f\Vert_{B^1_{1,1}}=\Vert S_0(f)\Vert_{L^1}+\sum_{N \text{ dyadic}} N\Vert \Delta_N f\Vert_{L^1},$$
here $f=S_0(f)+\sum\limits_{N \text{  dyadic}}\Delta_N f$ stands for the Littlewood-Paley decomposition of $f$ with $\Delta_N$ defined as \eqref{deltaj} above and $\mathcal{F}_y (S_0f)(p):=\varphi(p)f_p $. The space $B^1$ will be crucial in the analysis of the resonant system in section 4.

 For functions $F$ defined on $\R\times\T$, we will indicate the domain of integration by a subscript $x$ (for $\R$), $x,y$ (for $\R\times\T$) or $p$ (for $\Z$). We will use mainly four different norms:\\
two weak norms
\begin{align}
\Vert F\Vert_{Y^s}^2&:=\sup _{\xi\in\mathbb{R}}\left[1+\vert \xi\vert^2\right]^2\sum_p(1+|p|^2)^s\vert\widehat{F}_p(\xi)\vert^2\ , \label{DefYs}\\
\Vert F\Vert_{Z}&:=\sup_{\xi\in\R}\left[1+\vert \xi\vert^2\right]\|\widehat{F}(\xi,\cdot)\|_{B^1}\ ,
\end{align}
and two strong norms
\begin{equation}\label{DefS}
\Vert F\Vert_{S}:=\Vert F\Vert_{H^N_{x,y}}+\Vert xF\Vert_{L^2_{x,y}}\ ,\\
\Vert F\Vert_{S^+}:=\Vert F\Vert_S+\Vert (1-\partial_{xx})^4F\Vert_{S}+\Vert xF\Vert_{S}\ ,
\end{equation}
with $N$ to be fixed later.

The space-time norms we will use are
\begin{equation}\label{DefX}
\begin{split}
\Vert F\Vert_{X_T}:=&\sup_{0\le t\le T}\big\{\Vert F(t)\Vert_Z+(1+\vert t\vert)^{-\delta}\Vert F(t)\Vert_S +(1+\vert t\vert)^{1-3\delta}\Vert\partial_t F(t)\Vert_S\big\}\ ,\\
\Vert F\Vert_{X_T^+}:=&\Vert F\Vert_{X_T}+\sup_{0\le t\le T}\big\{(1+\vert t\vert)^{-5\delta}\Vert F(t)\Vert_{S^+}+(1+\vert t\vert)^{1-7\delta}\Vert\partial_t F(t)\Vert_{S^+}\big\}\ ,
\end{split}
\end{equation}
with a small parameter $\delta<10^{-3}$.

In the following sections, we will see that the $Z$ norm is a conserved quantity for the resonant system, which is of crucial importance, and for data in $S^+$, the solution is expected to grow slowly in $S^+$, while the difference between the true solution to \eqref{HWS} and the solution to the resonant system may decay in $S$. 

Now, at this stage, we present some elementary lemmas which will be useful in the later studies.
\begin{lemma}
Provided $N\geq13$, we have the following hierarchy
\begin{align}
&\Vert F\Vert_{Y^{1/2}}\lesssim \Vert F\Vert_{Z}\lesssim\Vert F\Vert_{Y^s},\ s>1\ ,\label{ineq1}\\
&\Vert F\Vert_{H^{1/2}_{x,y}}\lesssim \Vert F\Vert_Z\lesssim\Vert F\Vert_S\lesssim \Vert F\Vert_{S^+}\ .\label{ineq2}
\end{align}
\end{lemma}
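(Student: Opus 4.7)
The plan is to handle the four embeddings separately, since they are of genuinely different flavors. The two inequalities on the first line both carry the common weight $(1+|\xi|^2)^2$ in the $\xi$ variable, so they reduce pointwise in $\xi$ to the embeddings $h^{1/2}(\Z)\supset B^1_{1,1}(\T)\supset h^s(\Z)$ for $s>1$. Both follow from a Littlewood--Paley decomposition on $\T$ together with the Bernstein inequality $\Vert \Delta_M h\Vert_{L^2}\lesssim M^{1/2}\Vert \Delta_M h\Vert_{L^1}$: the left embedding comes from the crude bound $\sum_M M^2\Vert \Delta_M h\Vert_{L^1}^2\leq (\sum_M M\Vert \Delta_M h\Vert_{L^1})^2=\Vert h\Vert_{B^1}^2$, while the right uses Cauchy--Schwarz in the dyadic index, $\sum_M M\Vert \Delta_M h\Vert_{L^1}\leq (\sum_M M^{2-2s})^{1/2}(\sum_M M^{2s}\Vert \Delta_M h\Vert_{L^2}^2)^{1/2}$, with the first factor summable exactly when $s>1$.

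For $\Vert F\Vert_{H^{1/2}_{x,y}}\lesssim \Vert F\Vert_Z$ I would use the subadditivity $(1+|\xi|^2+|p|^2)^{1/2}\leq (1+|\xi|^2)^{1/2}(1+|p|^2)^{1/2}$ and the pointwise-in-$\xi$ embedding $h^{1/2}\subset B^1_{1,1}$ just established to get
\[
\Vert F\Vert_{H^{1/2}_{x,y}}^2\lesssim \int_{\R}(1+|\xi|^2)^{1/2}\Vert \widehat F(\xi,\cdot)\Vert_{B^1}^2\,d\xi,
\]
then pull $\sup_\xi (1+|\xi|^2)^2\Vert \widehat F(\xi,\cdot)\Vert_{B^1}^2=\Vert F\Vert_Z^2$ out of the integral, closing the estimate by the gain $\int_{\R}(1+|\xi|^2)^{-3/2}\,d\xi<\infty$. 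The final embedding $\Vert F\Vert_S\lesssim \Vert F\Vert_{S^+}$ is immediate from the definition of the $S^+$ norm.

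The substantial work is the middle inequality $\Vert F\Vert_Z\lesssim \Vert F\Vert_S$. I plan to write $(1+|\xi|^2)\widehat F(\xi,\cdot)=\mathcal{F}_x[(1-\partial_{xx})F](\xi,\cdot)$ and apply the vector-valued Hausdorff--Young bound $\sup_\xi \Vert \mathcal{F}_x G(\xi,\cdot)\Vert_{B^1_y}\leq \Vert G\Vert_{L^1_x B^1_y}$ to reduce to
\[
\Vert F\Vert_Z \leq \Vert (1-\partial_{xx})F\Vert_{L^1_x B^1_y}.
\]
Combining the embedding $H^s(\T)\hookrightarrow B^1_{1,1}(\T)$ for a fixed $s>1$ with the weighted Cauchy--Schwarz bound $\Vert g\Vert_{L^1_x}\lesssim \Vert g\Vert_{L^2_x}+\Vert xg\Vert_{L^2_x}$ further reduces matters to controlling $\Vert (1-\partial_{xx})F\Vert_{L^2_x H^s_y}$ and $\Vert x(1-\partial_{xx})F\Vert_{L^2_x H^s_y}$ by $\Vert F\Vert_S$. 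The first is $\lesssim \Vert F\Vert_{H^{s+2}_{x,y}}\leq \Vert F\Vert_{H^N}$ since $N\geq 13$; for the second, the commutator identity $x(1-\partial_{xx})=(1-\partial_{xx})x+2\partial_x$ reduces it to $\Vert xF\Vert_{H^2_x H^s_y}$ plus $\Vert F\Vert_{H^{s+1}_{x,y}}$. The main obstacle is this last bound on $\Vert xF\Vert_{H^2_x H^s_y}$: since $x$ does not commute with $\partial_x$, controlling high-order Sobolev norms of $xF$ forces one to iterate commutator identities and invoke Gagliardo--Nirenberg interpolation between $\Vert xF\Vert_{L^2_{xy}}$, which is directly given by $S$, and higher-order mixed norms of $xF$, which in turn are expressed through $\Vert F\Vert_{H^N_{x,y}}$ by further commutator manipulations. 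The generous hypothesis $N\geq 13$ is precisely what supplies the regularity margin needed to absorb all the resulting error terms into $\Vert F\Vert_S$.
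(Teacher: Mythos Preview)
Your treatment of \eqref{ineq1} and of the outer two inequalities in \eqref{ineq2} is correct and essentially coincides with the paper's argument. The gap is in the middle inequality $\Vert F\Vert_Z\lesssim\Vert F\Vert_S$, specifically in the step where you reduce to controlling $\Vert x(1-\partial_{xx})F\Vert_{L^2_xH^s_y}$ (equivalently $\Vert xF\Vert_{H^2_xH^s_y}$) by $\Vert F\Vert_S$. That bound is \emph{false}, and no combination of commutator identities and interpolation will produce it: interpolation between $\Vert xF\Vert_{L^2}$ (one weight, zero derivatives) and $\Vert F\Vert_{H^N}$ (zero weight, $N$ derivatives) can never yield a full weight together with positive regularity. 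Concretely, for a smooth bump $\chi$ supported near $0$ and a single nonzero mode $p$, take
\[
\widehat{F_p}(\xi)=M^{-N}\sin(M^N\xi)\,\chi(\xi-M),\qquad F_q=0\ (q\ne p).
\]
Then $\Vert F\Vert_{H^N_{x,y}}\sim 1$ and $\Vert xF\Vert_{L^2_{x,y}}=\Vert\partial_\xi\widehat{F_p}\Vert_{L^2_\xi}\sim 1$, so $\Vert F\Vert_S\sim 1$; but $(1+\xi^2)\partial_\xi\widehat{F_p}\sim M^2\cos(M^N\xi)\chi(\xi-M)$, hence $\Vert xF\Vert_{H^2_xH^s_y}\sim M^2\to\infty$. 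Your additive Cauchy--Schwarz bound $\Vert g\Vert_{L^1_x}\lesssim\Vert g\Vert_{L^2_x}+\Vert xg\Vert_{L^2_x}$ applied to $g=(1-\partial_{xx})F$ is therefore too lossy.

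The paper avoids this by never asking for a full weight on a differentiated object. Instead it decomposes dyadically in the $x$-frequency and uses the \emph{multiplicative} form $\Vert\widehat{f}\Vert_{L^\infty_\xi}\lesssim\Vert f\Vert_{L^2}^{1/2}\Vert xf\Vert_{L^2}^{1/2}$ on each piece $Q_A F_p$, together with the commutator estimate $\Vert[x,Q_A]\Vert_{L^2\to L^2}\lesssim A^{-1}$, to obtain
\[
(1+|\xi|^2)\,|\widehat{F_p}(\xi)|\lesssim\sum_A A^2\,\Vert Q_AF_p\Vert_{L^2}^{1/2}\Vert\langle x\rangle F_p\Vert_{L^2}^{1/2}\lesssim\Vert F_p\Vert_{H^\theta_x}^{1/2}\Vert\langle x\rangle F_p\Vert_{L^2}^{1/2}
\]
for any $\theta>4$; the factor $\Vert Q_AF_p\Vert_{L^2}^{1/2}$ supplies the high-frequency decay that the weight factor cannot. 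Squaring, summing in $p$, and a final Gagliardo--Nirenberg step then yield $\Vert F\Vert_Z\lesssim\Vert F\Vert_{Y^s}\lesssim\Vert F\Vert_{L^2}^{1/4}\Vert F\Vert_S^{3/4}$, which is where the hypothesis $N\ge 13$ actually enters.
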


\begin{proof}We begin with the proof of the first inequality \eqref{ineq1}, it is  sufficient to prove
$$\Vert f\Vert_{H_y^{1/2}}\lesssim \Vert f\Vert_{B^1}\lesssim\Vert f\Vert_{H_y^s},\ s>1\ .$$
\noindent {\bf 1.} $\Vert f\Vert_{H_y^{1/2}}\lesssim \Vert f\Vert_{B^1}$.\\
\begin{equation*}
\Vert f\Vert_{H_y^{1/2}}=\Vert S_0f\Vert_{L^2}+\big(\sum_{N\text{ dyadic}}N\Vert \Delta_N f\Vert_{L^2}^2\big)^{1/2}\ .
\end{equation*}
We notice that the Fourier transform of $S_0f$ is compactly supported on some interval $I$ with $|I|<2$, thus
\begin{equation*}
\Vert S_0f\Vert_{L^2}\leq \Vert \mathcal{F}_y(S_0f)(p)\Vert_{\ell_p^2}\leq 
\left(\sum_{p\in I} \vert \int{\mathrm{e}}^{-ixp}(S_0f)(x)dx\vert^2\right)^{1/2}\lesssim \Vert S_0f\Vert_{L^1}\ .
\end{equation*}
While the Fourier transform of $\Delta_N f$ is compactly supported on some interval $I$ with $|J_N|\sim N$, thus similarly
\begin{equation*}
N^{1/2}\Vert \Delta_N f\Vert_{L^2}\leq N^{1/2}\Vert \mathcal{F}_y(\Delta_N f)(p)\Vert_{\ell_p^2(J_N)}\lesssim N\Vert \Delta_N f\Vert_{L^1}\ ,
\end{equation*}
we then use the fact that $\ell^1$ is continuously embedded in $\ell^2$ and get 
\begin{equation*}
\Vert N^{1/2}\Vert \Delta_N f\Vert_{L^2}\Vert_{\ell_N^2}\leq \Vert N\Vert \Delta_N f\Vert_{L^1}\Vert_{\ell_N^2}\lesssim \sum_{N \text{ dyadic}} N\Vert \Delta_N f\Vert_{L^1}\ ,
\end{equation*}
thus $\Vert f\Vert_{H_y^{1/2}}\lesssim \Vert f\Vert_{B^1}$.

\smallskip
\noindent {\bf 2.} $\Vert f\Vert_{B^1}\lesssim\Vert f\Vert_{H_y^s},\ s>1$.\\
Since $\mathbb{T}$ is of finite measure, 
$$\Vert f\Vert_{L^1(\T)}\leq \Vert f\Vert_{L^2(\T)}\ .$$
 This inequality is deduced by Cauchy-Schwarz inequality, indeed,
\begin{equation*}
\sum N\Vert \Delta_N f\Vert_{L^1}\leq \sum N\Vert \Delta_N f\Vert_{L^2}\leq\big(\sum N^{2s}\Vert \Delta_N f\Vert_{L^2}^2\big)^{1/2}\big(\sum_{N \text{ dyadic}} N^{-2(s-1)}\big)^{1/2}\ ,
\end{equation*}
the second factor on the right hand side converges since $s>1$, and we obtain our result.

\smallskip
\noindent {\bf 3.} It is easy to show the first and last inequality in \eqref{ineq2}, and the middle inequality comes from the following Gagliardo-Nirenberg type inequality
\begin{equation}\label{GN}
 \Vert F\Vert_{Y^s}\lesssim\Vert F\Vert_{L^2_{x,y}}^{1/2-\sigma}\Vert F\Vert_S^{1/2+\sigma}\ ,\ s>1\ ,
\end{equation}
with $0<\sigma<1/2$ and the index in the definition of $S$ norm satisfies $\sigma N>3$.

To verify this inequality,  we need the elementary inequality
\begin{equation}\label{L11}
\|\hat{f}\|_{L^\infty_\xi(\mathbb{R})}\lesssim\|f\|_{L^1_x(\mathbb{R})}\lesssim \|f\|_{L^2_x(\R)}^{\frac{1}{2}}\| x  f\|_{L^2_x(\R)}^{\frac{1}{2}},
\end{equation}
one might observe that 
\begin{equation*}
\begin{split}
&\left[1+\vert \xi\vert^2\right]\vert \widehat{F}_p(\xi)\vert
\lesssim\sum_{N\text{ dyadic}} N^2\vert \widehat{Q_NF_p}(\xi)\vert\\
\qquad&\lesssim \sum_NN^2\Vert Q_NF_p(\cdot)\Vert_{L^2_x}^\frac{1}{2}\Vert  x  Q_NF_p(\cdot)\Vert_{L^2_x}^\frac{1}{2}\\
\qquad&\lesssim \big(\sum_NN^{-\frac{\theta-4}{2}}\big)\Vert (1-\partial_{xx})^\frac{\theta}{2}F_p(\cdot)\Vert_{L^2_x}^\frac{1}{2}\Vert \langle x\rangle F_p(\cdot)\Vert_{L^2_x}^\frac{1}{2}\\
\qquad&\lesssim \Vert F_p(\cdot)\Vert_{H^{\theta}_x}^\frac{1}{2}\Vert \langle x\rangle F_p(\cdot)\Vert_{L^2_x}^\frac{1}{2}\ ,
\end{split}
\end{equation*}
where we applied \eqref{comm} to gain the third inequality, and $\theta>4$. 
Squaring and multiplying by $\langle p\rangle^{2s}$, and combining with \eqref{ineq1}, we have for $s>1$,
\begin{equation*}
 \Vert F\Vert_{Y^s}^2=\sup_\xi [1+|\xi|^2]^2\Vert \widehat{F}(\xi,\cdot)\Vert_{H_y^s}^2 \lesssim \sum_{p\in\mathbb{Z}}\langle p\rangle^{2s}\Vert F_p(\cdot)\Vert_{H^{\theta}_x}\Vert \langle x\rangle F_p(\cdot)\Vert_{L^2_x}\lesssim \Vert F\Vert_{H_x^{\theta}H_y^{2s}}\Vert xF\Vert_{L_{x,y}^2}\ ,
\end{equation*}
the last inequality comes from the Cauchy-Schwarz inequality.
Then \eqref{GN} comes from an application of the Gagliardo-Nirenberg inequality on $\Vert F\Vert_{H_{x,y}^{\theta+2s}}$ with $\theta+2s>6$, 
$$\Vert F\Vert_{H_{x,y}^{\theta+2s}}\leq \Vert F\Vert_{L_{x,y}^2}^{1-2\sigma}\Vert F\Vert_{H_{x,y}^{N}}^{2\sigma}\ ,$$
and $ 2\sigma N=\theta+2s>6$.
 By choosing $\sigma=1/4$ and $N>12$, thus for $s>1$,
\begin{equation}\label{ZSNorm}
 \Vert F\Vert_Z\lesssim \Vert F\Vert_{Y^s}\lesssim\Vert F\Vert_{L^2_{x,y}}^\frac{1}{4}\Vert F\Vert_S^\frac{3}{4}\ .
\end{equation}
\end{proof}

We remark that by taking suitable $\sigma$, for the inequality \eqref{ineq2}, the requirement of the Sobolev regularity in $S$ norm may be $N\geq 7$. 

\medskip
We also remark that the operators $Q_{\le N}$, $P_{\le N}$ and the multiplication by $\varphi(\cdot/N)$ are bounded in $Z$, $S$, $S^+$, uniformly in $N$.

In this paper, we make often use of the following elementary bound to sum-up the $1d$ estimates,
\begin{equation}\label{sump}
\Big\Vert
\sum_{q-r+s=p}
c^1_qc^2_rc^3_s\Big\Vert_{\ell^2_{p}}
\lesssim
\min_{\{j,k,\ell\}=\{1,2,3\}}\Vert c^j\Vert_{\ell^2_p}\Vert c^k\Vert_{\ell^1_p}\Vert c^\ell\Vert_{\ell^1_p}\ .
\end{equation}

The following lemma shows the bounds on the non-linearity $\mathcal{N}^t$ in the $S$ and $S^+$ norms.
\begin{lemma}\label{Nestimate}\cite[Lemma~2.1]{HPTV}
\begin{equation}\label{nestimate}
\begin{split}
&\Vert \mathcal{N}^t[F,G,H]\Vert_S\lesssim (1+\vert t\vert)^{-1}\Vert F\Vert_S\Vert G\Vert_S\Vert H\Vert_S\ ,\\
&\Vert \mathcal{N}^t[F^1,F^2,F^3]\Vert_{S^+}\lesssim (1+\vert t\vert)^{-1}\max_{\{j,k,\ell\}=\{1,2,3\}}\Vert F^j\Vert_{S^+}\Vert F^k\Vert_{S}\Vert F^\ell\Vert_{S}\ .
\end{split}
\end{equation}
\end{lemma}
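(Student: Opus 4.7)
The plan is to reduce \eqref{nestimate} to a one-dimensional trilinear dispersive estimate for $\mathcal{I}^t$ and then reassemble using Fourier series in $y$. Since $\vert D_y\vert$ acts as a unimodular phase on each $y$-Fourier mode, the expansion \eqref{fourierN} shows that the $H^N_{x,y}$ and $xL^2_{x,y}$ components of $\|\mathcal{N}^t[F,G,H]\|_S$ are controlled by an $\ell^2_p$ bound (weighted by $\langle p\rangle^N$ for the Sobolev piece) on the convolution
\begin{equation*}
\Big(\sum_{q-r+s=p}\mathcal{I}^t[F_q,G_r,H_s]\Big)_{p\in\Z}.
\end{equation*}

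The core one-dimensional estimate I would establish is
\begin{equation*}
\|\mathcal{I}^t[f,g,h]\|_{L^2_x}\lesssim (1+|t|)^{-1}\bigl(\|a\|_{L^2_x}+\|xa\|_{L^2_x}\bigr)\bigl(\|b\|_{L^2_x}+\|xb\|_{L^2_x}\bigr)\|c\|_{L^2_x},
\end{equation*}
for any permutation $\{a,b,c\}$ of $\{f,g,h\}$. For $|t|\ge 1$ this follows from the stationary-phase factorization $\mathcal{U}(t)f=(4\pi it)^{-1/2}\mathrm{e}^{ix^2/4t}\widehat{\mathrm{e}^{i\cdot^2/4t}f}(x/2t)$, which yields the dispersive bound $\|\mathcal{U}(t)f\|_{L^\infty_x}\lesssim|t|^{-1/2}\|f\|_{L^1_x}$; then H\"older in $x$ combined with \eqref{L11} closes the estimate. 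For $|t|\le 1$ one uses Sobolev embedding $H^1_x\hookrightarrow L^\infty_x$ on two of the three factors.

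To promote this to the full $S$ norm, I would handle the $H^N_{x,y}$ and $xL^2_{x,y}$ pieces separately. Since $\partial_x$ commutes with $\mathcal{A}$, the $x$-derivatives distribute by Leibniz onto the three factors, and each resulting term is controlled by the one-dimensional bound above. For the $x$-weight, the Galilean identity $\mathcal{U}(t)x=(x+2it\partial_x)\mathcal{U}(t)$, together with its conjugate analogue, leads to the key algebraic relation
\begin{equation*}
x\,\mathcal{I}^t[f,g,h]=\mathcal{I}^t[xf,g,h]-\mathcal{I}^t[f,xg,h]+\mathcal{I}^t[f,g,xh],
\end{equation*}
the sign change on the middle term coming from the complex conjugation on that slot. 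Thus the $x$-weight is redistributed onto a single factor at a time, with no loss of the $(1+|t|)^{-1}$ decay. The Fourier sum in $p$ is closed by \eqref{sump}, which measures two of the three factors in $\ell^1_p$; the corresponding $\ell^1_p$ norm is absorbed by the surplus Sobolev weight $\langle p\rangle^N$ since $N\ge 13$. The $S^+$ bound is proved along the same scheme, with the additional $(1-\partial_{xx})^4$ or $x$-weight placed on whichever factor is measured in $S^+$, and the remaining two factors in $S$. The main technical point will be the simultaneous bookkeeping of top derivative, $x$-weight, and $\ell^2_p$ summation across the trilinear structure, which the Galilean identity reduces to a routine, if tedious, case analysis.
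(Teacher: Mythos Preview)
Your approach is correct and essentially the same as the paper's: both reduce to the one-dimensional H\"older estimate on $\mathcal{I}^t$ combined with the dispersive bound for $|t|\ge 1$ and Sobolev embedding for $|t|\le 1$, and then reassemble the $p$-sum via \eqref{sump}. The only organizational difference is that the paper packages the passage from the $L^2_{x,y}$ bound to the $S$ and $S^+$ bounds into a general transfer lemma (Lemma~\ref{SS+} in the appendix), which encodes exactly the Leibniz and Galilean distribution of $\partial_x$, $\partial_y$, and $x$ across the three factors that you spell out by hand.
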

\begin{proof}
Due to Lemma~\ref{SS+} in the appendix, it is sufficient to prove
\begin{equation}\label{c1}
\Vert \mathcal{N}^t[F^1,F^2,F^3]\Vert_{L_{x,y}^2}\lesssim (1+\vert t\vert)^{-1}\min_{\{j,k,\ell\}=\{1,2,3\}}\Vert F^j\Vert_{L_{x,y}^2}\Vert F^k\Vert_S\Vert F^\ell\Vert_S\ .
\end{equation}
Coming back to \eqref{fourierN}, 
\begin{align}\label{c2}
\Vert \mathcal{N}^t[F^1,F^2,F^3]\Vert_{L_{x,y}^2}\lesssim\Vert\sum_{q-r+s=p}\Vert \mathcal{I}^t[F_q^1,F_r^2,F_s^3]\Vert_{L_x^2}\Vert_{\ell_p^2}\ ,
\end{align}
thus we only need to calculate $\Vert \mathcal{I}^t[f^1,f^2,f^3]\Vert_{L_x^2}$. By the definition of $\mathcal{I}^t$ \eqref{DefOfI}, we have the energy bound
\begin{align*}
\Vert \mathcal{I}^t[f^1,f^2,f^3]\Vert_{L_x^2}&=\Big\Vert {\mathrm{e}}^{-it\partial_{xx}}\Big({\mathrm{e}}^{it\partial_{xx}}f^1\overline{{\mathrm{e}}^{it\partial_{xx}}f^2}{\mathrm{e}}^{it\partial_{xx}}f^3\Big)\Big\Vert_{L_x^2}\\&\lesssim\min_{\{j,k,\ell\}=\{1,2,3\}}\Vert f^j\Vert_{L_x^2}\Vert{\mathrm{e}}^{it\partial_{xx}} f^k\Vert_{L_x^\infty}\Vert{\mathrm{e}}^{it\partial_{xx}} f^\ell\Vert_{L_x^\infty}\ .
\end{align*}
Then by \eqref{sump},
\begin{align}\label{c3}
\Vert \mathcal{N}^t[F^1,F^2,F^3]\Vert_{L_{x,y}^2}\lesssim\min_{\{j,k,\ell\}=\{1,2,3\}}\Vert F^j\Vert_{L_{x,y}^2}\sum_r\Vert {\mathrm{e}}^{it\partial_{xx}}F_r^k\Vert_{L_x^\infty}\sum_s\Vert {\mathrm{e}}^{it\partial_{xx}}F_s^\ell\Vert_{L_x^\infty}\ .
\end{align}
For $|t|>1$, the factor $(1+|t|)^{-1}$ comes from the dispersive estimate
\begin{equation}\label{dispersiveest}
\Vert {\mathrm{e}}^{it\partial_{xx}}f\Vert_{L_x^\infty}\lesssim \vert t\vert^{-\frac12}\Vert f\Vert_{L_x^1}\lesssim \vert t\vert^{-\frac12}\Vert f\Vert_{L_x^2}^{\frac12}\Vert xf\Vert_{L_x^2}^{\frac12}\ ,
\end{equation}
then 
\begin{align*}
\sum_p\Vert{\mathrm{e}}^{it\partial_{xx}}F_p\Vert_{L_x^\infty}&\lesssim  |t|^{-1/2}\sum_p\Vert F_p\Vert_{L_x^2}^{\frac12}\Vert xF_p\Vert_{L_x^2}^{\frac12}\\
&= |t|^{-1/2}\sum_p\vert p\vert^{-s}\vert p\vert^{s}\Vert F_p\Vert_{L_x^2}^{\frac12}\Vert xF_p\Vert_{L_x^2}^{\frac12}\\
&\leq  |t|^{-1/2}(\sum_p \vert p\vert^{-2s})^{1/2}(\sum_p\vert p\vert^{4s}\Vert F_p\Vert_{L_x^2}^2 )^{1/4}(\sum_p\Vert xF_p\Vert_{L_x^2}^2)^{1/4}\\
&\leq |t|^{-1/2}\Vert F\Vert_S\ ,
\end{align*} 
where we took $s>1/2$ in the second and third inequalities.
While for $|t|\leq1$, one may use Sobolev estimate instead of the dispersive estimate,
$$\Vert {\mathrm{e}}^{it\partial_{xx}}f\Vert_{L_x^\infty}\lesssim \Vert f\Vert_{H_x^1}\ ,$$
then
\begin{align*}
\sum_p\Vert{\mathrm{e}}^{it\partial_{xx}}F_p\Vert_{L_x^\infty}&\lesssim \sum_p\Vert F_p\Vert_{H_x^1}=\sum_p\vert p\vert^{-s}\vert p\vert^{s}\Vert F_p\Vert_{H_x^1}\\
&\leq (\sum_p \vert p\vert^{-2s})^{1/2}(\sum_p\vert p\vert^{2s}\Vert F_p\Vert_{H_x^1}^2 )^{1/2}\leq \Vert F\Vert_S\ ,
\end{align*} 
with $s>1/2$. Thus for any $t$,
\begin{equation}\label{c4}
\sum_{p\in\Z} \Vert{\mathrm{e}}^{it\partial_{xx}}F_p\Vert_{L_x^\infty}\lesssim (1+|t|)^{-1/2}\Vert F\Vert_S\ .
\end{equation}
Plugging \eqref{c4} into \eqref{c3}, we get \eqref{c1} and complete the proof of Lemma~\ref{Nestimate}.

\end{proof}


\section{Structure of the non-linearity}
The purpose of this section is to extract the key effective interactions from the full non-linearity in \eqref{HWS}. We are to gain the decomposition
\begin{equation}
\mathcal{N}^t[F,G,H]=\frac{\pi}{t}\mathcal{R}[F, G,H]+\mathcal{E}^t[F,G,H]\ ,
\end{equation}
where $\mathcal{R}$ is the resonant part,
\begin{equation}\label{DefR}
\mathcal{F}\mathcal{R}[F,G,H](\xi,p)=\sum\limits_{(p,q,r,s)\in\Gamma_0}\widehat{F}_q(\xi)\overline{\widehat{G}_r(\xi)}\widehat{H}_s(\xi)\  ,
\end{equation}
and $\mathcal{E}^t$ is a remainder term, which is estimated in Proposition~\ref{Nonlinearity} below. We will see later that this $\mathcal{R}[G,G,G]$ is exactly the same one as in \eqref{RSS}. 

Our main result in this section is the following proposition.
\begin{proposition}\label{Nonlinearity}
Assume that for $T^\ast\geq 1$,  $F$, $G$, $H$: $\mathbb{R}\to S$ satisfy
\begin{equation}\label{BA}
\Vert F\Vert_{X_{T^\ast}}+\Vert G\Vert_{X_{T^\ast}}+\Vert H\Vert_{X_{T^\ast}}\leq 1\ .
\end{equation}
Then we can write
$$
\mathcal{E}^{t}[F(t),G(t),H(t)]=\mathcal{E}_{1}^{t}[F(t),G(t),H(t)]+\mathcal{E}_{2}^{t}[F(t),G(t),H(t)]\ ,
$$
and if for $j=1,2$ we note $\mathcal{E}_j(t):=\mathcal{E}_j^{t}[F(t),G(t),H(t)]$ then the following estimates hold uniformly in $T^\ast\geq 1$,
\begin{equation*} 
\begin{split}
&\sup_{1\leq T\leq T^\ast}T^{-\delta}\Vert \int_{T/2}^T\mathcal{E}_j(t)dt\Vert_{S}\lesssim 1,\quad j=1,2\ ,\\
&\sup_{1\leq t\leq T^\ast}(1+\vert t\vert)^{1+\delta}\Vert \mathcal{E}_{1}(t)\Vert_{Z}\lesssim \sup_{1\leq t\leq T^\ast}(1+\vert t\vert)^{1+\delta}\Vert \mathcal{E}_{1}(t)\Vert_{Y^s}\lesssim 1\ ,\ s>1\ ,\\
&\sup_{1\leq t\leq T^\ast}(1+\vert t\vert)^{1/10}\Vert \mathcal{E}_3(t)\Vert_{S}\lesssim 1\ ,
\end{split}
\end{equation*}
where $\mathcal{E}_{2}(t)=\partial_t\mathcal{E}_3(t)$.
Assuming in addition
\begin{equation}\label{BA+}
\Vert F\Vert_{X^+_{T^\ast}}+\Vert G\Vert_{X^+_{T^\ast}}+\Vert H\Vert_{X^+_{T^\ast}}\leq 1\ ,
\end{equation}
we also have that
\begin{equation*} 
\sup_{1\leq T\leq T^\ast}T^{-5\delta}\Vert \int_{T/2}^T\mathcal{E}_j(t)dt\Vert_{S^+}\lesssim 1,\ \sup_{1\leq T\leq T^\ast} T^{2\delta}\Vert \int_{T/2}^T\mathcal{E}_j(t)dt\Vert_{S}\lesssim 1,\ j=1,2\ .\\
\end{equation*}
\end{proposition}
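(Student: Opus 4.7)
The plan is to follow the decomposition scheme of Hani--Pausader--Tzvetkov--Visciglia. Starting from the explicit Fourier representation \eqref{fourierN} of $\mathcal{N}^t$, the key analytic input is a stationary phase expansion for the oscillatory $(\eta,\kappa)$-integral appearing in $\mathcal{F}_x(\mathcal{I}^t[F_q,G_r,H_s])(\xi)$. Since the phase $2t\eta\kappa$ has a unique non-degenerate critical point at the origin, one has, for sufficiently smooth integrands,
\begin{equation*}
\mathcal{F}_x(\mathcal{I}^t[f,g,h])(\xi) = \frac{\pi}{t}\,\hat{f}(\xi)\overline{\hat{g}(\xi)}\hat{h}(\xi) + r^t[f,g,h](\xi),
\end{equation*}
where $r^t$ gains an extra factor $t^{-\alpha}$, $\alpha\in(0,1)$, at the price of Sobolev and weighted regularity on $\hat f,\hat g,\hat h$. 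This regularity is available since $F,G,H\in S$ with $N\geq 13$, and I would precede the stationary phase step by a Littlewood--Paley truncation in the $x$-frequency (using $Q_N$) to localize the integrand, then sum over dyadic scales at the end.

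Plugged into \eqref{fourierN}, the principal part of the expansion produces
$$\frac{\pi}{t}\sum_{(p,q,r,s)\in\mathcal{M}} e^{it\omega}\hat{F}_q(\xi)\overline{\hat{G}_r(\xi)}\hat{H}_s(\xi),$$
which splits according to whether $\omega=|p|-|q|+|r|-|s|$ vanishes or not. The resonant contribution ($\omega=0$) is exactly $\frac{\pi}{t}\mathcal{R}[F,G,H]$ by \eqref{DefR}, while the stationary phase remainder $r^t$ is declared part of $\mathcal{E}_1^t$. For the non-resonant sum ($\omega\neq 0$), I would integrate by parts in $t$ via
\begin{equation*}
\frac{e^{it\omega}}{t} = \partial_t\!\left(\frac{e^{it\omega}}{i\omega t}\right) + \frac{e^{it\omega}}{i\omega t^2};
\end{equation*}
the exact-derivative piece (together with the Leibniz terms generated by $\partial_t F,\partial_t G,\partial_t H$) yields $\mathcal{E}_2^t=\partial_t\mathcal{E}_3^t$, while the $1/t^2$ piece is absorbed into $\mathcal{E}_1^t$ and enjoys stronger-than-integrable decay.

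Each announced estimate is then a multilinear bound on a concrete symbol. The pointwise $Z$- and $Y^s$-decay of $\mathcal{E}_1^t$ combines the stationary phase gain $t^{-1-\alpha}$ with the Gagliardo--Nirenberg embedding \eqref{GN} and the trilinear lattice sum estimate \eqref{sump}. The pointwise $S$-bound $\|\mathcal{E}_3(t)\|_S\lesssim t^{-1/10}$ follows from a version of Lemma~\ref{Nestimate} applied with the bounded modulating factor $1/(i\omega t)$, using the a priori $X_T$-bounds on $F,G,H$ (and, via the Leibniz terms, on $\partial_t F,\partial_t G,\partial_t H$). The time-averaged $S$-bounds are then almost immediate: for $j=2$, $\int_{T/2}^T\mathcal{E}_2^t\,dt=\mathcal{E}_3(T)-\mathcal{E}_3(T/2)$ has $S$-norm $\lesssim T^{-1/10}\ll T^\delta$; for $j=1$, the pointwise $t^{-1-\alpha}$ decay integrates over $[T/2,T]$ to a bounded quantity. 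The $S^+$ versions proceed identically, using the appendix transfer lemma to upgrade $L^2$-level multilinear bounds into $S$- and $S^+$-estimates, with the extra derivatives and weights of $S^+$ distributed on a single factor while the other two remain in $S$.

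The principal obstacle will be the summability over $(p,q,r,s)\in\mathcal{M}$ in the non-resonant regime: although $|\omega|\geq 1$ makes $1/\omega$ bounded, it provides no gain whatsoever in the lattice variables, so one must squeeze the summability entirely out of the $h^s_p$-regularity encoded in $S$ via \eqref{sump}. The second delicate point is that the stationary phase remainder must be handled precisely enough to earn $t^{-1-\delta}$ rather than the cheap $t^{-1}$ in the $Z$-norm, which forces simultaneous use of both the $H^N_{x,y}$-regularity and the spatial weight $xF\in L^2_{x,y}$ built into the $S$-norm.
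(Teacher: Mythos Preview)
Your outline captures the spirit of the HPTV scheme but differs from the paper's execution, and one of the differences is a genuine gap. Structurally, the paper does \emph{not} expand $\mathcal I^t$ by stationary phase first and then split; it splits $\mathcal N^t=\mathcal N_0^t+\widetilde{\mathcal N}^t$ according to whether $\omega=0$ before doing any asymptotics in the $(\eta,\kappa)$-integral. For the non-resonant piece $\widetilde{\mathcal N}^t$ it inserts a cutoff $\varphi(t^{1/4}\eta\kappa)$, producing $\mathcal O_1^t$ (support away from the critical point, handled by integration by parts in $\kappa$) and $\mathcal O_2^t$ (near the critical point). The integration by parts in $t$ using $e^{it\omega}$ is applied to $\mathcal O_2^t$ itself, so that $\mathcal E_3=\sum_{\omega\neq0}\frac{e^{it\omega}}{i\omega}\mathcal O_2^t[F_q,G_r,H_s]$ carries no explicit $1/t$ factor; its decay comes from the dispersive estimate on $\mathcal O_2^t$. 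Only the resonant piece $\mathcal N_0^t$ is compared to $\frac{\pi}{t}\mathcal R$ by what amounts to a stationary phase computation (Lemma~\ref{Res}). Your proposal to run stationary phase on $\mathcal I^t$ globally and then integrate by parts on the leading $e^{it\omega}/t$ term is a plausible alternative organization, but it is not what the paper does.

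The genuine gap is the high $x$-frequency regime. The paper first removes the region $\max(A,B,C)\geq T^{1/6}$ (Lemma~\ref{highfreq}) and, crucially, controls its contribution to the time-integrated $S$-norm via the \emph{bilinear Strichartz estimate} of Lemma~\ref{1dBE}, combined with a partition of $[T/2,T]$ into subintervals of length $T^{9/10}$ on which the profiles are frozen. When one input frequency dominates, stationary phase in $(\eta,\kappa)$ gives no exploitable gain, and a pure energy/Sobolev argument loses derivatives and cannot close in $S$; the bilinear refinement is exactly what recovers the loss. Your sentence about Littlewood--Paley localizing and ``summing over dyadic scales at the end'' does not address this, and without the bilinear Strichartz input the $S$- and $S^+$-bounds for $\int_{T/2}^T\mathcal E_j\,dt$ in the high-frequency range do not follow.
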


The statement of Proposition~\ref{Nonlinearity} says that if the remainder $\mathcal{E}^t$ has inputs bounded in $Z$ and slightly growing in $S$ then $\mathcal{E}^t$ reproduces the same growth in $S$ and even decays in $Z$. To prove this proposition, we first present several reductions by performing a decomposition of the non-linearity as
$$\sum_{A,B,C-\text{dyadic}}\mathcal{N}^t[Q_AF(t),Q_BG(t),Q_CH(t)]\ .$$ 

\subsection{The High Frequency Estimates}
In this subsection, we are going to prove a decay estimate on the non-linearity $\mathcal{N}^t[Q_AF(t),Q_BG(t),Q_CH(t)]$ for $t\sim T$, $T\ge1$, in the regime $\max(A,B,C)\ge T^{\frac{1}{6}}$. In the case when two inputs have high frequencies, we can simply conclude by using energy estimates, while in the case when the highest frequency is much higher than the others, we invoke the bilinear refinements of the Strichartz estimate on $\R$. 
\begin{lemma}\label{1dBE}\cite{CKSTTSIAM}
Assume that $\lambda/10 \geq  \mu\geq 1$ and that $u(t)={\mathrm{e}}^{it\partial_{xx}}u_0$, $v(t)={\mathrm{e}}^{it\partial_{xx}}v_0$.
Then, we have the bound
\begin{equation}\label{BS}
\Vert
Q_{\lambda}u\overline{Q_{\mu} v}
\Vert_{L^2_{x,t}(\mathbb{R}\times\mathbb{R})}\lesssim \lambda^{-\frac{1}{2}}\Vert u_0\Vert_{L^2_x(\mathbb{R})}\Vert v_0\Vert_{L^2_x(\mathbb{R})}.
\end{equation}
\end{lemma}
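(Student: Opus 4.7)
The plan is to exploit the curvature of the Schr\"odinger symbol via a linearization on the Fourier side, in the spirit of the classical Bourgain--Ozawa bilinear $L^2$ refinement. I would set $f(\xi) := \mathcal{F}_x(Q_\lambda u_0)(\xi)$ and $g(\eta) := \mathcal{F}_x(Q_\mu v_0)(\eta)$, so that $f$ is supported in $\{|\xi|\sim\lambda\}$ and $g$ in $\{|\eta|\lesssim\mu\}$, and represent the product as
\begin{equation*}
(Q_\lambda u)(t,x)\,\overline{(Q_\mu v)(t,x)} = \frac{1}{(2\pi)^2}\iint e^{i(\xi-\eta)x - i(\xi^2-\eta^2)t}\, f(\xi)\,\overline{g(\eta)}\, d\xi\, d\eta\ .
\end{equation*}
The crux is to read the right-hand side as an inverse Fourier integral in $(t,x)$, with dual variables $\alpha := \xi-\eta$ and $\beta := \xi^2-\eta^2$, and then apply Plancherel.

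Concretely, I would work separately on the two half-spaces $\{\xi>\eta\}$ and $\{\xi<\eta\}$, on each of which the map $\Phi : (\xi,\eta) \mapsto (\alpha,\beta)$ is a diffeomorphism onto its image, with Jacobian
\begin{equation*}
|\det d\Phi| = \left|\det\begin{pmatrix}1 & -1\\ 2\xi & -2\eta\end{pmatrix}\right| = 2|\xi-\eta| = 2|\alpha|\ .
\end{equation*}
Applying Plancherel in $(t,x)$ followed by a change of variables back to $(\xi,\eta)$ yields
\begin{equation*}
\Vert Q_\lambda u\,\overline{Q_\mu v}\Vert_{L^2_{t,x}}^2 \;\sim\; \iint \frac{1}{|\xi-\eta|}\,|f(\xi)|^2\,|g(\eta)|^2\, d\xi\, d\eta\ .
\end{equation*}

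Now the frequency separation assumption $\lambda \geq 10\mu$ enters: on the support of $f(\xi)\,g(\eta)$ one has $|\xi|\sim\lambda$ while $|\eta|\lesssim \mu \leq \lambda/10$, hence $|\xi-\eta|\sim\lambda$. This converts the singular weight $1/|\xi-\eta|$ into a harmless $\lambda^{-1}$ prefactor, so the remaining integral reduces to $\Vert f\Vert_{L^2}^2\Vert g\Vert_{L^2}^2 = \Vert u_0\Vert_{L^2}^2\Vert v_0\Vert_{L^2}^2$ by Plancherel; taking square roots closes the argument.

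The only genuinely delicate point is the change of variables, which degenerates on the diagonal $\xi=\eta$. However, the hypothesis $\lambda \geq 10\mu$ keeps the support of $f\otimes g$ uniformly away from the diagonal, so bijectivity of $\Phi$ on each half-space and the non-vanishing of its Jacobian on the relevant support are automatic; no regularization is required, and the lemma follows by a clean application of Plancherel in the $(\alpha,\beta)$ variables.
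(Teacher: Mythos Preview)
Your argument is correct and is precisely the standard Fourier--Plancherel proof of the one-dimensional bilinear Strichartz estimate: represent the product on the Fourier side, change variables $(\xi,\eta)\mapsto(\xi-\eta,\xi^2-\eta^2)$, and use the frequency separation $\lambda\geq 10\mu$ to bound the Jacobian factor $|\xi-\eta|^{-1}$ by $\lambda^{-1}$. The paper does not supply its own proof of this lemma but simply refers the reader to \cite{CKSTTSIAM}, where exactly this argument appears; your write-up is thus a faithful expansion of the cited result. Two minor cosmetic points: $Q_\mu$ localizes to $|\eta|\sim\mu$ rather than $|\eta|\lesssim\mu$ (this changes nothing), and $\Vert f\Vert_{L^2}\leq\Vert u_0\Vert_{L^2}$ rather than equality, but the inequality is in the right direction.
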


One may refer to \cite{CKSTTSIAM} for the proof. 

\medskip
 Slight modifications of the proof of the corresponding result in \cite[Lemma 3.2]{HPTV} lead to the following estimates. We reproduce the proof here for the readers' convenience.
\begin{lemma}\label{highfreq}
Assume that $ T\ge 1$. The following estimates hold uniformly in $T$:
\begin{align}
&\Big\Vert \sum_{\substack{A,B,C\\\max(A,B,C)\ge T^{\frac{1}{6}}}}\mathcal{N}^t[Q_AF,Q_BG,Q_CH]\Big\Vert_{Y^s}\notag\\
&\qquad\qquad\qquad\qquad\qquad\qquad\lesssim T^{-\frac{5}{4}}
\Vert F\Vert_{S}\Vert G\Vert_{S}\Vert H\Vert_{S},\ s>1,\ \forall t\ge T/4,\\
&\Big\Vert \sum_{\substack{A,B,C\\\max(A,B,C)\ge T^{\frac{1}{6}}}}\int_{\frac{T}{2}}^T\mathcal{N}^t[Q_AF(t),Q_BG(t),Q_CH(t)]dt\Big\Vert_S\notag\\
&\qquad\qquad\qquad\qquad\qquad\qquad\lesssim T^{-\frac{1}{50}}\Vert F\Vert_{X_T}\Vert G\Vert_{X_T}\Vert H\Vert_{X_T},\\
&\Big\Vert \sum_{\substack{A,B,C\\\max(A,B,C)\ge T^{\frac{1}{6}}}}\int_{\frac{T}{2}}^T \mathcal{N}^t[Q_AF(t),Q_BG(t),Q_CH(t)]dt\Big\Vert_{S^+}\notag\\
&\qquad\qquad\qquad\qquad\qquad\qquad\lesssim T^{-\frac{1}{50}}\Vert F\Vert_{X_T^+}\Vert G\Vert_{X_T^+}\Vert H\Vert_{X_T^+}.
\end{align}
\end{lemma}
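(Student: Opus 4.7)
The plan is to follow the template of \cite[Lemma 3.2]{HPTV}, adapting it to the wave-guide setting. By the symmetry of the trilinear form $\mathcal{N}^t$ and the dyadic frequency splitting, I may assume without loss of generality that $A=T_{\max}:=\max(A,B,C)\geq T^{1/6}$. The two main ingredients are the $H^N$ regularity encoded in the $S$-norm with $N\geq 13$ (which converts high frequency into polynomial gain $A^{-(N-k)}$ for any fixed $k$) and the $1D$ bilinear Strichartz estimate \eqref{BS}, which produces an $A^{-1/2}$ gain whenever $Q_AF$ is paired against a factor whose $x$-frequency is separated by a factor of $10$ from $A$.

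\textbf{Pointwise $Y^s$ estimate.} Starting from \eqref{fourierN} I will place $(Q_AF)_q$ in $L^2_x$ and apply the dispersive bound \eqref{dispersiveest} to the other two factors, gaining a factor $t^{-1}\sim T^{-1}$. The extra $[1+|\xi|^2]^{2}$ weight in the $Y^s$ norm is absorbed by the $x$-frequency support condition $|\xi|\lesssim A$ on the output, which yields $[1+|\xi|^2]^2\lesssim A^{4}$. Using the remaining $H^{N}$-regularity I get a surplus factor of $A^{-(N-4)}\lesssim T^{-(N-4)/6}$; since $N\geq 13$ this is far more than the $T^{-1/4}$ needed to reach $T^{-5/4}$. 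The $y$-frequency sums close via \eqref{sump}, and the dyadic sum over $A,B,C$ is geometric in the largest frequency.

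\textbf{Time-integrated $S$ and $S^+$ estimates.} By Lemma~\ref{SS+} (invoked from the appendix) it suffices to prove the corresponding $L^2_{x,y}$ bounds on $\int_{T/2}^T\mathcal{N}^t[Q_AF(t),Q_BG(t),Q_CH(t)]\,dt$. I will dualize against $\psi\in L^2$ with $\|\psi\|_{L^2}=1$ and decompose $\psi=\sum_D Q_D\psi$; by the frequency support of $\mathcal{N}^t$ only $D\lesssim A$ contributes. Pairing $e^{it\partial_{xx}}Q_AF$ with one of the remaining three factors of strictly lower $x$-frequency and applying Lemma~\ref{1dBE} gains $A^{-1/2}\lesssim T^{-1/12}$ in $L^2_{x,t}$. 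The other two factors are placed in $L^\infty_{t}L^\infty_x L^2_y$ and $L^2_{t}L^\infty_x L^2_y$ respectively, controlled by the dispersive estimate and the $X_T$ norm; the $\ell^1_p$ summations over $y$-frequencies again close through \eqref{sump}. The net gain $T^{-1/12}$ absorbs all the $T^{O(\delta)}$ losses from the $X_T$ definition and produces the required $T^{-1/50}$. The $S^+$ version is identical in structure: the extra $(1-\partial_{xx})^4$ and $x$ weights either commute with $e^{it\mathcal{A}}$ or generate commutator terms bounded via \eqref{comm}, and the additional $5\delta$ loss in the $X_T^+$ norm is still absorbed by $T^{-1/12}$.

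\textbf{Main obstacle.} The subtle case is when two of $A,B,C$ are comparable and simultaneously $\geq T^{1/6}$, because then the bilinear Strichartz \eqref{BS} is not directly applicable between the two highest-frequency factors. In that regime I will instead pair $Q_AF$ with the test function $Q_D\psi$, for which the frequency separation is automatic, and use the excess $H^N$ regularity on the second high-frequency factor as a pure energy gain of the form $B^{-(N-\sigma)}$ for $\sigma$ small. Carefully balancing these subcases so that every $T^{\delta}$ loss from the $X_T^+$ norm is beaten by the combined bilinear and Bernstein gains is the main technical bookkeeping of the proof.
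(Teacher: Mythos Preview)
Your outline has a genuine gap in the time-integrated $S$ and $S^+$ estimates. The bilinear Strichartz inequality \eqref{BS} applies to \emph{free} Schr\"odinger evolutions $e^{it\partial_{xx}}u_0$ with \emph{time-independent} data $u_0$, but in $\int_{T/2}^T\mathcal N^t[Q_AF(t),Q_BG(t),Q_CH(t)]\,dt$ the profiles $F(t),G(t),H(t)$ depend on $t$. You cannot pair $e^{it\partial_{xx}}Q_AF(t)$ with another factor and invoke \eqref{BS} directly. The paper resolves this by partitioning $[T/2,T]$ into $O(T^{1/10})$ subintervals of length $T^{9/10}$ via cutoffs $\chi(t/T^{9/10}-j)$, and splitting the integrand into (i) a difference term $\mathcal N^t[\cdot,F(t)]-\mathcal N^t[\cdot,F(t_j)]$ controlled by $\|\partial_tF\|_S\lesssim T^{-1+3\delta}\|F\|_{X_T}$ over each short window, and (ii) a frozen term $\mathcal N^t[Q_AF(t_j),Q_BG(t_j),Q_CH(t_j)]$ with time-independent data, to which \eqref{BS} now legitimately applies. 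This freezing-plus-$\partial_tF$ mechanism is precisely why the $X_T$ norm carries the $(1+|t|)^{1-3\delta}\|\partial_tF\|_S$ piece; your proposal never uses that piece and hence cannot close.

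Your treatment of the ``two comparable high frequencies'' case is also off. You claim that pairing $Q_AF$ with the test function $Q_D\psi$ gives automatic frequency separation, but when $A\sim B\gtrsim T^{1/6}$ the output frequency $D$ can perfectly well be comparable to $A$, so \eqref{BS} is again unavailable. The paper instead handles this regime ($\Lambda^c:=\{\mathrm{med}(A,B,C)\ge T^{1/6}/16\}$) by a pure energy argument: put two of the high-frequency factors in $L^\infty_{x,y}\hookleftarrow H^2_{x,y}$ and cash in $(\mathrm{med})^{-11}$ from the $H^{13}$ regularity, giving $T^{-11/6}$ with no Strichartz at all. Only on the genuinely separated set $\Lambda=\{\mathrm{med}\le T^{1/6}/16,\ \max\ge T^{1/6}\}$ is the bilinear estimate used, after the time-freezing described above. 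Finally, for the $Y^s$ inequality the paper does not argue pointwise in $\xi$ as you suggest; it combines the crude $L^2$ bound \eqref{l2sss} with the $S$-bound of Lemma~\ref{Nestimate} through the interpolation $\|\cdot\|_{Y^s}\lesssim\|\cdot\|_S^{3/4}\|\cdot\|_{L^2}^{1/4}$, which is cleaner than trying to pass from $L^2_\xi$ to the weighted $L^\infty_\xi$ directly.
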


\medskip
\begin{proof}
Let us begin with the first inequality. 
Let $K\in L_{x,y}^2$, then we need to bound
\begin{align*}
I_K&=\left\langle K,\ \mathcal{N}^t[Q_AF,Q_BG,Q_CH]\right\rangle\\
&\leq \left\vert\int_{\R\times\T} e^{it\mathcal{A}}(Q_AF)\cdot\overline{e^{it\mathcal{A}}(Q_BG)}\cdot e^{it\mathcal{A}}(Q_CH)\cdot\overline{e^{it\mathcal{A}}(K)}\right\vert\ .
\end{align*}
By Sobolev embedding, we see that
\begin{eqnarray*}
&&\left\vert\int_{\R\times\T} {\mathrm{e}}^{it\mathcal{A}}(Q_AF)\cdot\overline{{\mathrm{e}}^{it\mathcal{A}}(Q_BG)}\cdot {\mathrm{e}}^{it\mathcal{A}}(Q_CH)\cdot\overline{{\mathrm{e}}^{it\mathcal{A}}(K)}\right\vert\\
&\lesssim & \Vert  {\mathrm{e}}^{it\mathcal{A}}Q_{A}F\Vert_{L^6_{x,y}}\Vert  {\mathrm{e}}^{it\mathcal{A}}Q_{B}G\Vert_{L^6_{x,y}}\Vert  {\mathrm{e}}^{it\mathcal{A}}Q_{C}H\Vert_{L^6_{x,y}}\|K\|_{L^2_{x,y}}\\
&\lesssim &  \Vert  {\mathrm{e}}^{it\mathcal{A}}Q_{A}F\Vert_{H^{s}_{x,y}}\Vert  {\mathrm{e}}^{it\mathcal{A}}Q_{B}G\Vert_{H^{s}_{x,y}}\Vert  {\mathrm{e}}^{it\mathcal{A}}Q_{C}H\Vert_{H^{s}_{x,y}}\|K\|_{L^2_{x,y}}\\
&=&  \Vert  Q_{A}F\Vert_{H^{s}_{x,y}}\Vert Q_{B}G\Vert_{H^{s}_{x,y}}\Vert  Q_{C}H\Vert_{H^{s}_{x,y}}\|K\|_{L^2_{x,y}}\\
&\lesssim &(ABC)^{-13+s}\|Q_{A}F\|_{H^{13}_{x,y}}\|Q_{B}G\|_{H^{13}_{x,y}}\|Q_{C}H\|_{H^{13}_{x,y}}\|K\|_{L^2_{x,y}}\ ,
\end{eqnarray*}
with $s>2/3$. Then by duality, taking $s=1$, we have 
\begin{equation}\label{l2sss}
\begin{split}
&\Big\Vert \mathcal{N}^t[Q_AF,Q_BG,Q_CH]\Big\Vert_{L^2_{x,y}}\\
&\qquad\lesssim (ABC)^{-12}\|Q_{A}F\|_{H^{13}_{x,y}}\|Q_{B}G\|_{H^{13}_{x,y}}\|Q_{C}H\|_{H^{13}_{x,y}}\\
&\qquad\lesssim (ABC)^{-12}\|Q_{A}F\|_{S}\|Q_{B}G\|_{S}\|Q_{C}H\|_{S}\ .
\end{split}
\end{equation}
Then By\eqref{ZSNorm}, 
\begin{align*}
&\Big\Vert \sum_{\substack{A,B,C\\\max(A,B,C)\ge T^{\frac{1}{6}}}}\mathcal{N}^t[Q_AF,Q_BG,Q_CH]\Big\Vert_{Y^s}\\
&\qquad\lesssim \sum_{\substack{A,B,C\\\max(A,B,C)\ge T^{\frac{1}{6}}}}\Big\Vert\mathcal{N}^t[Q_AF,Q_BG,Q_CH]\Big\Vert_{Y^s}\\
&\qquad\lesssim \sum_{\substack{A,B,C\\\max(A,B,C)\ge T^{\frac{1}{6}}}}\Big \Vert\mathcal{N}^t[Q_AF,Q_BG,Q_CH]\Big\Vert_S^{3/4}\ \Big\Vert \mathcal{N}^t[Q_AF,Q_BG,Q_CH]\Big\Vert_{L^2}^{1/4}\\
&\qquad\lesssim T^{-3/4}\sum_{\substack{A,B,C\\\max(A,B,C)\ge T^{\frac{1}{6}}}}(ABC)^{-3}\|Q_{A}F\|_{S}\|Q_{B}G\|_{S}\|Q_{C}H\|_{S}\\
&\qquad\lesssim T^{-5/4}\Vert F\Vert_S\Vert G\Vert_S\Vert H\Vert_S\ ,
\end{align*}
where in the third inequality we used Lemma~\ref{Nestimate} and \eqref{l2sss}.

For the other two estimates, we must be more careful. First of all, we will split the set $\{(A,B,C):\ \max(A,B,C)\geq T^{\frac{1}{6}}\}$ into two parts $\Lambda$ and its relative complement $\Lambda^c$. Here the set $\Lambda$ is defined as
$\Lambda:=\Big\{(A,B,C):\ \mathrm{med}(A,B,C)\leq T^{\frac{1}{6}}/16,\ \max(A,B,C)\geq T^{\frac{1}{6}}\Big\}$,
with $\mathrm{med}(A,B,C)$ denote the second largest dyadic number among $(A,B,C)$.

Let us start with the case $(A,B,C)\in\Lambda^c$, we claim
\begin{equation}\label{lamc}
\Big\Vert \sum_{(A,B,C)\in\Lambda^c}\mathcal{N}^t[Q_AF,Q_BG,Q_CH]\Big\Vert_{S^{(+)}}\lesssim T^{-\frac{11}{6}}
\Vert F\Vert_{S^{(+)}}\Vert G\Vert_{S^{(+)}}\Vert H\Vert_{S^{(+)}}\ 
\end{equation}
By Lemma~\ref{SS+}, we only need to control $\Big\Vert \sum\limits_{(A,B,C)\in\Lambda^c}\mathcal{N}^t[Q_AF,Q_BG,Q_CH]\Big\Vert_{L^2}$, the main strategy is similar to the proof above, but this time we should not lose derivatives on all of the $F,G,H$, let us check the condition \eqref{LB}. 
Let $K\in L_{x,y}^2$, then we need to bound
\begin{align*}
I_K&=\left\langle K,\ \sum_{(A,B,C)\in\Lambda^c}\mathcal{N}^t[Q_AF,Q_BG,Q_CH]\right\rangle\\
&\leq \sum_{(A,B,C)\in\Lambda^c}\left\vert\int_{\R\times\T} {\mathrm{e}}^{it\mathcal{A}}(Q_AF)\cdot\overline{{\mathrm{e}}^{it\mathcal{A}}(Q_BG)}\cdot {\mathrm{e}}^{it\mathcal{A}}(Q_CH)\cdot\overline{{\mathrm{e}}^{it\mathcal{A}}(K)}\right\vert\\
&\lesssim \sum_{(A,B,C)\in\Lambda^c}\|Q_{A}F\|_{L^2_{x,y}}\| {\mathrm{e}}^{it\mathcal{A}}Q_{B}G\|_{L_{x,y}^\infty}\| {\mathrm{e}}^{it\mathcal{A}}Q_{C}H\|_{L_{x,y}^{\infty}}\|K\|_{L^2_{x,y}}
\end{align*}
\begin{align*}
&\lesssim \sum_{(A,B,C)\in\Lambda^c}\|Q_{A}F\|_{L^2_{x,y}}\|Q_{B}G\|_{H_{x,y}^2}\|Q_{C}H\|_{H_{x,y}^2}\|K\|_{L^2_{x,y}}\\
&\lesssim \sum_{(A,B,C)\in\Lambda^c}(BC)^{-11}\|Q_{A}F\|_{L^2_{x,y}}\|Q_{B}G\|_{H^{13}_{x,y}}\|Q_{C}H\|_{H_{x,y}^{13}}\|K\|_{L^2_{x,y}}\\
&\lesssim \Big(\sum_{(A,B,C)\in\Lambda^c}(\mathrm{med}(A,B,C))^{-11}\Big)\|F\|_{L^2_{x,y}}\|G\|_{H^{13}_{x,y}}\|H\|_{H_{x,y}^{13}}\|K\|_{L^2_{x,y}}\\
&\lesssim T^{-11/6}\Vert F\Vert_{L_{x,y}^2}\Vert K\Vert_{L_{x,y}^2} \|G\|_S\|H\|_S\ ,
\end{align*}
then by duality,
\begin{equation}
\Big\Vert \sum\limits_{(A,B,C)\in\Lambda^c}\mathcal{N}^t[Q_AF,Q_BG,Q_CH]\Big\Vert_{L^2}\lesssim T^{-11/6}\Vert F\Vert_{L_{x,y}^2}\|G\|_S\|H\|_S\ .
\end{equation}
The inequality above holds by replacing $F$ with $G$, $H$, then we get \eqref{lamc} by applying Lemma~\ref{SS+}.

\medskip
Now we turn to the case $(A,B,C)\in\Lambda$, we are to show
\begin{equation}\label{Slam}
\begin{split}
&\Vert \sum_{\substack{A,B,C\\ (A,B,C)\in\Lambda}}\int_{\frac{T}{2}}^T\mathcal{N}^t[Q_AF(t),Q_BG(t),Q_CH(t)]dt\Vert_{S^{(+)}}\\&\qquad\lesssim T^{-\frac{1}{50}}\Vert F\Vert_{X_T^{(+)}}\Vert G\Vert_{X_T^{(+)}}\Vert H\Vert_{X_T^{(+)}}\ .
\end{split}\end{equation}
We will only prove the case with norms $S$ and $X_T$, the proof of the case with $S^+,X_T^+$ is similar. The main tool of this part is the bilinear Strichartz estimate from Lemma~\ref{1dBE}. We consider a decomposition 
\begin{equation}
[T/4,2T]=\bigcup_{j\in J} I_j\ ,\ I_j=[jT^\frac{9}{10},(j+1)T^\frac{9}{10}]=[t_j,t_{j+1}]\ ,\ \#J\lesssim T^{\frac{1}{10}}
\end{equation}
and consider $\chi\in C^\infty_c(\mathbb{R})$, $\chi\geq 0$ such that $\chi(s)=0$ if $\vert s\vert\ge 2$ and
\begin{equation*}
\sum_{k\in\mathbb{Z}}\chi(s-k)\equiv 1\ .
\end{equation*}
The left hand-side of \eqref{Slam} can be estimated by $C(E_1+E_2)$, where
\begin{multline*}
E_1=\Big\| \sum_{j\in J} \sum_{(A,B,C)\in \Lambda}\int_{\frac{T}{2}}^T\chi\big(\frac{t}{T^{\frac{9}{10}}}-j\big)\\
\Big(\mathcal{N}^t[Q_AF(t),Q_BG(t),Q_CH(t)]-\mathcal{N}^t[Q_AF(t_j),Q_BG(t_j),Q_CH(t_j)]\Big)dt\Big\|_{S}
\end{multline*}
and
\begin{equation*}
E_2= \Big\|\sum_{j\in J} \sum_{(A,B,C)\in \Lambda}
\int_{\frac{T}{2}}^T\chi\big(\frac{t}{T^{\frac{9}{10}}}-j\big)\mathcal{N}^t[Q_AF(t_j),Q_BG(t_j),Q_CH(t_j)]dt\Big\|_S\ .
\end{equation*}
Notice that $F(t_j),\ G(t_j),\ H(t_j)$ do not depend on $t$.

\medskip
Let us start to estimate $E_1$,
\begin{equation}\label{E1Dec}
E_1\leq  \sum_{j\in J} \int_{\frac{T}{2}}^T\chi\big(\frac{t}{T^{\frac{9}{10}}}-j\big)E_{1,j}(t)dt\ 
\end{equation}
with
\begin{equation*}
\begin{split}
&E_{1,j}(t):=\\
&\qquad\Big\|\sum_{(A,B,C)\in \Lambda}\Big(\mathcal{N}^t[Q_AF(t),Q_BG(t),Q_CH(t)]-\mathcal{N}^t[Q_AF(t_j),Q_BG(t_j),Q_CH(t_j)]\Big)\Big\|_S\ .
\end{split}\end{equation*}

Denote by $Q_+:=Q_{\ge T^\frac{1}{6}}$ and $ Q_-:=Q_{\le T^\frac{1}{6}/16}$, then due to the structure of $\Lambda$, one of $A,B,C$ is larger than $T^{\frac16}$ and the other two are smaller than $ T^\frac{1}{6}/16$, we decompose
\begin{equation*}
\begin{split}
&\sum_{(A,B,C)\in\Lambda}\mathcal{N}^t[Q_AF,Q_BG,Q_CH]=\mathcal{N}^t[Q_{+}F,Q_{-}G,Q_{-}H]\\
&\qquad\qquad\qquad\qquad\qquad\qquad+\mathcal{N}^t[Q_{-}F,Q_{+}G,Q_{-}H]+\mathcal{N}^t[Q_{-}F,Q_{-}G,Q_{+}H]\ .
\end{split}
\end{equation*}
We rearrange the terms in $E_{1,j}$ two by two, and rewrite each pair as follows
\begin{align*}
&\mathcal{N}^t[Q_{+}F(t),Q_{-}G(t),Q_{-}H(t)]-\mathcal{N}^t[Q_{+}F(t_j),Q_{-}G(t_j),Q_{-}H(t_j)]\\
&\qquad=\mathcal{N}^t[Q_{+}(F(t)-F(t_j)),Q_{-}G,Q_{-}H(t)]+\mathcal{N}^t[Q_{+}F(t_j),Q_{-}(G(t)-Gt_j)),Q_{-}H(t)]\\
&\qquad+\mathcal{N}^t[Q_{+}F(t_j),Q_{-}G(t_j),Q_{-}(H(t)-H(t_j))]\ ,
\end{align*}
then by Lemma~\ref{Nestimate}, and the boundedness of $Q_\pm$ on $S^{(+)}$, we see that
\begin{align*}
\Vert \mathcal{N}^t[Q_{+}(F(t)-F(t_j)),Q_{-}G,Q_{-}H(t)]\Vert_S\lesssim (1+\vert t\vert)^{-1}\Vert F(t)-F(t_j)\Vert_S\Vert G(t)\Vert_S\Vert H(t)\Vert_S\ ,
\end{align*}
We bound the other terms similarly, and finally we have an estimate on $E_{1,j}$,
\begin{equation}\label{EstimE1j}
\begin{split}
E_{1,j}(t)&
\le (1+\vert t\vert)^{-1}\Big[\Vert F(t)-F(t_j)\Vert_S\Vert G(t)\Vert_S\Vert H(t)\Vert_S\\
&+\Vert F(t_j)\Vert_S\Vert G(t)-G(t_j)\Vert_S\Vert H(t)\Vert_{S}\\
&+\Vert F(t_j)\Vert_S\Vert G(t_j)\Vert_S\Vert H(t)-H(t_j)\Vert_S\Big]\ .
\end{split}
\end{equation}
Since $\vert t-t_j\vert\leq T^{\frac{9}{10}}$, 
$$\Vert F(t)-F(t_j)\Vert_S\leq \int_{t_j}^t\|\partial_t F(\theta)\|_S d\theta\leq T^{\frac{9}{10}}\sup_t\|\partial_t F(t)\|_S\ .$$
Notice that this is the advantage of introducing the partition of time interval provided by $\chi$. Comparing with the definition of $X_T$ (see \eqref{DefX}), we have
\begin{align*}
\Vert F(t)-F(t_j)\Vert_S &\leq T^{-\frac{1}{10}+3\delta}\|F\|_{X_T}\ ,\\
\Vert F(t) \Vert_S &\leq T^{\delta}\|F\|_{X_T}\ .
\end{align*}
Therefore,
$$E_{1,j}\lesssim  T^{-\frac{11}{10}+5\delta}\|F\|_{X_T}\|G\|_{X_T}\|H\|_{X_T}\ ,$$
then
$$E_1\lesssim \int_{T/2}^T \sum_{j\in J}\chi(\frac{t}{T^{\frac{9}{10}}}-j)E_{1,j(t)}dt\lesssim T^{-\frac{1}{10}+5\delta}\|F\|_{X_T}\|G\|_{X_T}\|H\|_{X_T}\ .$$

\medskip
We now turn to $E_2$, recall
$$E_2= \Big\|\sum_{j\in J} \sum_{(A,B,C)\in \Lambda}
\int_{\frac{T}{2}}^T\chi\big(\frac{t}{T^{\frac{9}{10}}}-j\big)\mathcal{N}^t[Q_AF(t_j),Q_BG(t_j),Q_CH(t_j)]dt\Big\|_S\ ,$$
with $Q_AF(t_j),Q_BG(t_j),Q_CH(t_j)$ do not depend on $t$.
Denoting
\begin{equation*}
E_{2,j}^{A,B,C}= \Big\| \int_{\frac{T}{2}}^{T}\chi\big(\frac{t}{T^{\frac{9}{10}}}-j\big)\mathcal{N}^t[Q_AF(t_j),Q_BG(t_j),Q_CH(t_j)]dt\ \Big\|_S\ ,
\end{equation*}
then
$$
E_2\leq  \sum_{j\in J} \sum_{(A,B,C)\in \Lambda}E_{2,j}^{A,B,C}\ .
$$

We claim 
\begin{equation}\label{SuffE2}
\begin{split}
&\Big\Vert \int_{\frac{T}{2}}^{T}\chi\big(\frac{t}{T^{\frac{9}{10}}}-j\big)
\mathcal{N}^t[Q_AF^a,Q_BF^b,Q_CF^c]dt\Big\Vert_{L^2_{x,y}}\\
&\lesssim (\max(A,B,C))^{-1}\min_{\{\alpha,\beta,\gamma\}=\{a,b,c\}}\Vert F^{\alpha}\Vert_{L^2_{x,y}}\Vert F^{\beta}\Vert_S\Vert F^{\gamma}\Vert_S.
\end{split}
\end{equation}
Then by Lemma~\ref{SS+}, $\Vert E_{2,j}^{A,B,C}\Vert_S\lesssim  (\max(A,B,C))^{-1}\Vert F\Vert_S\Vert G\Vert_S\Vert H\Vert_S$, the estimate for $E_2$ will come out by summing up.
Let us prove \eqref{SuffE2},  assuming $K\in L^2_{x,y}$, we consider with functions $F^a, F^b,F^c$ independent on $t$,
\begin{equation*}
\begin{split}
I_K&=\sum_{p-q+r-s=0}{\mathrm{e}}^{it\omega}\langle K_p,\int_{\frac{T}{2}}^{T}\chi\big(\frac{t}{T^{\frac{9}{10}}}-j\big)\mathcal{N}^t[Q_AF_q^a,Q_BF_r^b,Q_CF_s^c]dt\rangle_{L^2_{x}\times L^2_{x}}\\
&=\sum_{p-q+r-s=0}{\mathrm{e}}^{it\omega} \int_{\frac{T}{2}}^{T}\int_{\R\times\T}\chi\big(\frac{t}{T^{\frac{9}{10}}}-j\big){\mathrm{e}}^{it\partial_{xx}}(Q_A F_q^a)\overline{{\mathrm{e}}^{it\partial_{xx}}(Q_B F_r^b)}{\mathrm{e}}^{it\partial_{xx}}(Q_C F_s^c)\overline{{\mathrm{e}}^{it\partial_{xx}}K_p}dxdt
\end{split}
\end{equation*}
where we may assume that $K=Q_DK$, $D\simeq \max(A,B,C)$. Without loss of generality, we assume $A=\max(A,B,C)$,  then by H\"older's inequality,
\begin{align*}
&\Big\vert \int_{\frac{T}{2}}^{T}\int_{\R\times\T}\chi\big(\frac{t}{T^{\frac{9}{10}}}-j\big){\mathrm{e}}^{it\partial_{xx}}(Q_A F_q^a)\overline{{\mathrm{e}}^{it\partial_{xx}}(Q_B F_r^b)}{\mathrm{e}}^{it\partial_{xx}}(Q_C F_s^c)\overline{{\mathrm{e}}^{it\partial_{xx}}Q_DK_p}dxdt\Big\vert\\
& \leq\Vert{\mathrm{e}}^{it\partial_{xx}}(Q_A F_q^a)\overline{{\mathrm{e}}^{it\partial_{xx}}(Q_B F_r^b)}\Vert_{L^2_{x,t}}\Vert{\mathrm{e}}^{it\partial_{xx}}(Q_C F_s^c)\overline{{\mathrm{e}}^{it\partial_{xx}}Q_DK_p}\Vert_{L^2_{x,t}}\ ,
\end{align*}
since $A\geq16B$, $D\geq16C$, applying the bilinear Strichartz estimate from Lemma~\ref{1dBE} below, we then have
\begin{align*}
&\Vert{\mathrm{e}}^{it\partial_{xx}}(Q_A F_q^a)\overline{{\mathrm{e}}^{it\partial_{xx}}(Q_B F_r^b)}\Vert_{L^2_{x,t}}\lesssim A^{-1/2}\Vert F^a_q\Vert_{L^2_x}\Vert F^b_s\Vert_{L^2_x}\\
&\Vert{\mathrm{e}}^{it\partial_{xx}}(Q_C F_s^c)\overline{{\mathrm{e}}^{it\partial_{xx}}Q_DK_p}\Vert_{L^2_{x,t}}\lesssim D^{-1/2}\Vert  F^c_r\Vert_{L^2_x}\Vert K_p\Vert_{L^2_x}\ .
\end{align*}
Applying Cauchy-Schwarz and \eqref{sump} on the summation on the right hand side of $I_K$,  we have
\begin{equation*}
\begin{split}
I_K&\lesssim \sum_{p-q+r-s=0}(\max(A,B,C))^{-1}\|F_q^a\|_{L^2_{x}}\|F_r^b\|_{L^2_{x}}\|F_s^c\|_{L^2_{x}}\Vert K_p\Vert_{L^2_{x}}\\
&\lesssim (\max(A,B,C))^{-1}\Big\Vert \sum_{p=q-r+s}\|F_q^a\|_{L^2_{x}}\|F_r^b\|_{L^2_{x}}\|F_s^c\|_{L^2_{x}}\Big\Vert_{\ell_p^2}\Vert K_p\Vert_{L^2_{x,y}}\\
&\lesssim (\max(A,B,C))^{-1}\min_{\{\alpha,\beta,\gamma\}=\{a,b,c\}}\|F^\alpha\|_{L^2_{x,y}}\sum_p\|F_p^\beta\|_{L^2_{x}}\sum_p\|F_p^\gamma\|_{L^2_{x}}\Vert K\Vert_{L^2_{x,y}}\\
&\lesssim(\max(A,B,C))^{-1} \min_{\{\alpha,\beta,\gamma\}=\{a,b,c\}}\Big(\|F^\alpha\|_{L^2_{x,y}}\sum_p\big(\vert p\vert^{-s}\vert p\vert^s\vert\|F_p^\beta\|_{L^2_{x}}\big)\\
&\qquad\qquad\qquad\qquad\qquad\qquad\qquad\qquad\qquad\qquad\cdot\sum_p\big(\vert p\vert^{-s}\vert p\vert^s\|F^\gamma\|_{L^2_{x}}\big)\Vert K\Vert_{L^2_{x,y}}\Big)\\
&\lesssim (\max(A,B,C))^{-1}\min_{\{\alpha,\beta,\gamma\}=\{a,b,c\}}\|F^\alpha\|_{L^2_{x,y}}\|F^\beta\|_{S}\|F^\gamma\|_{S}\Vert K\Vert_{L^2_{x,y}}\ ,
\end{split}
\end{equation*}
where we took $s>1/2$. The result \eqref{SuffE2} turns out by duality. Applying Lemma~\ref{SS+}, we get
\begin{align*}
E_{2,j}^{A,B,C}\lesssim (\max(A,B,C))^{-1}\Vert F\Vert_{S}\Vert G\Vert_S\Vert H\Vert_S\ ,
\end{align*} 
then
\begin{align*}
E_2\leq \sum_{j\in J}\sum_{(A,B,C)\in\Lambda}E_{2,j}^{A,B,C}\lesssim \#J \sum_{(A,B,C)\in\Lambda}(\max(A,B,C))^{-1}\Vert F\Vert_{S}\Vert G\Vert_S\Vert H\Vert_S\ .
\end{align*}
Without loss of generality, we assume $A=\max(A,B,C)$, then
\begin{equation*}
\sum_{(A,B,C)\in \Lambda}(\max(A,B,C))^{-1}=\big(\sum_{A\geq T^{1/6}}A^{-1}\big)\big(\#\{B:B\leq T^{1/6}/16\}\big)^2 \lesssim T^{-1/6+\delta}
\end{equation*}
while using the definition \eqref{DefX},
\begin{equation*}
\Vert F(t_j)\Vert_S\Vert G(t_j)\Vert_S\Vert H(t_j)\Vert_S\leq T^{3\delta} \Vert F\Vert_{X_T}\Vert G\Vert_{X_T}\Vert H\Vert_{X_T}\ ,
\end{equation*}
thus
\begin{equation}
E_2\lesssim T^{-1/15+\delta}\Vert F\Vert_{X_T}\Vert G\Vert_{X_T}\Vert H\Vert_{X_T}\ ,
\end{equation}
which is a stronger version of \eqref{Slam}. The proof of Lemma~\ref{highfreq} is complete.
\end{proof}

\medskip
Thus we may suppose that the $x$ frequencies of $F,G,H$ are $\lesssim T^{\frac{1}{6}}$. It is natural to introduce the first decomposition
\begin{align}\label{Dec2}
&\mathcal{N}^t[F,G,H]=\mathcal{N}_0^t[F,G,H]+\widetilde{\mathcal{N}}^t[F,G,H]\ ,\\
&\mathcal{F}\mathcal{N}_0^t(\xi, p):=\sum_{(p,q,r,s)\in\Gamma_0}\mathcal{F}_x\big(\mathcal{I}^t[F_q,G_r,H_s]\big)(\xi)\ .
\end{align}
\subsection{The fast oscillations}
Firstly, we present another elementary estimate here.
\begin{lemma}\label{CM}
Let $\frac{1}{p}=\frac{1}{q}+\frac{1}{r}+\frac{1}{s}$ with $ 1\leq p,q,r,s\le\infty$,
then
\begin{equation*}
\begin{split}
\big\Vert \int_{\R^3}{\mathrm{e}}^{ix\xi}m(\eta,\kappa)\widehat{f}(\xi-\eta)\overline{\widehat{g}}(\xi-\eta-\kappa)\widehat{h}(\xi-\kappa)d\eta d\kappa d\xi\big\Vert_{L_{x}^p}\lesssim\Vert\mathcal{F}^{-1}m\Vert_{L^1(\R^2)}\Vert f\Vert_{L^q}\Vert g\Vert_{L^r}\Vert h\Vert_{L^s}.
\end{split}
\end{equation*}
\end{lemma}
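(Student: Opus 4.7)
The plan is to reduce this trilinear multiplier estimate to a superposition of pointwise products of translations, following a standard Coifman--Meyer type argument. First I would represent the symbol as the Fourier transform of its inverse Fourier transform: setting $K:=\mathcal{F}^{-1}m$ and using the Fourier inversion formula on $\R^2$,
\[
m(\eta,\kappa)=\int_{\R^2}K(a,b)\,{\mathrm{e}}^{i(a\eta+b\kappa)}\,da\,db.
\]
Inserting this into the trilinear expression and applying Fubini to move the $(a,b)$ integral to the outside reduces the lemma to evaluating the inner $(\xi,\eta,\kappa)$ integral explicitly for each fixed $(a,b)$.

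To evaluate that inner integral I would perform the linear change of variables $\alpha:=\xi-\eta$, $\beta:=\xi-\eta-\kappa$, $\gamma:=\xi-\kappa$, which is a bijection of $\R^3$ with unit Jacobian and inverse $\xi=\alpha-\beta+\gamma$, $\eta=\gamma-\beta$, $\kappa=\alpha-\beta$. Under this change of variables the phase factorizes as
\[
{\mathrm{e}}^{ix\xi}{\mathrm{e}}^{ia\eta+ib\kappa}={\mathrm{e}}^{i(x+b)\alpha}\,{\mathrm{e}}^{-i(x+a+b)\beta}\,{\mathrm{e}}^{i(x+a)\gamma},
\]
so the triple integral decouples into three one-dimensional Fourier inversions of $\widehat f$, $\overline{\widehat g}$, $\widehat h$. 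Up to the overall constant $(2\pi)^3$, the bracketed inner integral is therefore exactly the pointwise product
\[
f(x+b)\,\overline{g(x+a+b)}\,h(x+a),
\]
where the complex conjugation is preserved because $\overline{\widehat g(\beta)}$ inverts back to $\overline{g(x+a+b)}$ after negating the relevant phase.

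For the final step I would take the $L^p_x$ norm, apply Minkowski's inequality to bring $\|\cdot\|_{L^p_x}$ inside the $(a,b)$ integral, and then use H\"older's inequality with the exponents $\frac1p=\frac1q+\frac1r+\frac1s$ on the pointwise product. By translation invariance of Lebesgue norms, the factors $\|f\|_{L^q}\|g\|_{L^r}\|h\|_{L^s}$ are independent of $(a,b)$ and come out of the integral, leaving $\int_{\R^2}|K(a,b)|\,da\,db=\|\mathcal{F}^{-1}m\|_{L^1(\R^2)}$.

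Since every step is either an application of Fourier inversion, a linear change of variables, Minkowski's inequality, or H\"older's inequality, no estimate is genuinely delicate. The only point requiring attention is bookkeeping for the Fourier conventions, in particular that the conjugation on $\widehat g$ survives the inversion as a conjugation on $g$, and that the change of variables above is measure preserving; beyond that the argument is purely algebraic.
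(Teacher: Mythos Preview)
Your proposal is correct and takes essentially the same route as the paper: the paper expands $\widehat f,\overline{\widehat g},\widehat h$ back in physical space and integrates out $(\xi,\eta,\kappa)$ to obtain
\[
I(x)=\int_{\R^2}\mathcal{F}^{-1}m(y,z)\,f(x-z)\,\overline{g}(x-y-z)\,h(x-y)\,dy\,dz,
\]
and then applies Minkowski followed by H\"older exactly as you do. Your version simply organizes the computation differently---expanding $m$ via $\mathcal{F}^{-1}m$ and using the linear change of variables $(\xi,\eta,\kappa)\mapsto(\alpha,\beta,\gamma)$---but lands on the same translated pointwise product and the same final estimate.
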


\begin{proof}
\begin{equation*}
\begin{split} 
I&=
\int_{\mathbb{R}^3}{\mathrm{e}}^{ix\xi}m( \eta, k)\widehat{f}(\xi-\eta)\overline{\widehat{g}}(\xi-\eta-k)\widehat{h}(\xi-k)d\eta d\kappa d\xi\\
&=\int_{\mathbb{R}^3\times \mathbb{R}^2}\left(\int_{\mathbb{R}^3}{\mathrm e}^{i\xi(x-\alpha+\beta-\gamma)}{\mathrm e}^{-i\eta(y-\alpha+\beta)}{\mathrm e}^{i\kappa(z+\beta-\gamma)}d\xi d\eta d\kappa\right)\\
&\qquad\qquad\qquad\qquad\qquad\qquad\qquad\qquad\mathcal{F}^{-1}m(y,z)f(\alpha)\overline{g}(\beta)h(\gamma)dydzd\alpha d\beta d\gamma\\
&=\int_{\mathbb{R}^2}\mathcal{F}^{-1}m(y,z)f(x-z)\overline{g}(x-y-z)h(x-y)dydz\ ,
\end{split}
\end{equation*}
then
\begin{equation*}
\begin{split}
\Vert I\Vert_{L^p_x}&\leq \int_{\mathbb{R}^3}\vert\mathcal{F}^{-1}m(y,z)\vert \Vert f\overline{g}h\Vert_{L^p_x}dydz\\
&=\Vert\mathcal{F}^{-1}m\Vert_{L^1(\mathbb{R}^2)}\Vert f\overline{g}h\Vert_{L^p(\mathbb{R})} \\
&\leq \Vert\mathcal{F}^{-1}m\Vert_{L^1(\R^2)}\Vert f\Vert_{L^q}\Vert g\Vert_{L^r}\Vert h\Vert_{L^s}\ ,
\end{split}
\end{equation*}
the last inequality comes from the H\"older's inequality and the assumption $\frac1p=\frac1q+\frac1r+\frac1s.$
\end{proof}
\begin{remark}
Similar result holds for the case $m=m(\xi,\eta,\kappa)$, one may refer to \cite[Lemma 7.5]{HPTV}.
\end{remark}

\medskip

The main purpose of this subsection is to estimate of $\widetilde{\mathcal{N}}^t$. 
\begin{lemma}\label{fastosc}
Let $1\le T\le T^\ast$. Assume that $F$, $G$, $H$: $\mathbb{R} \to S$ satisfy \eqref{BA} and
\begin{equation*}
F=Q_{\le T^{1/6}}F, \quad G=Q_{\le T^{1/6}}G, \quad H=Q_{\le T^{1/6}}H\ .
\end{equation*}
Then we can write
$$
\widetilde{\mathcal{N}}^t[F(t),G(t),H(t)]=\widetilde{\mathcal{E}}_1^t[F(t),G(t),H(t)]+\mathcal{E}_2^t[F(t),G(t),H(t)]\ ,
$$
and if we set $\widetilde{\mathcal{E}_1}(t):=\widetilde{\mathcal{E}}^t_1[F(t),G(t),H(t)]$
and $\mathcal{E}_2(t):=\mathcal{E}_2^t[F(t),G(t),H(t)]$ then it holds that, uniformly in $1\leq T\leq T^\ast$ ,
\begin{equation*}
T^{1+2\delta}\sup_{T/4\leq t\le T^\ast}\Vert \widetilde{\mathcal{E}_1}(t)\Vert_S\lesssim 1\ ,\quad
T^{1/10}\sup_{T/4\leq t\le T^\ast}\Vert \mathcal{E}_3(t)\Vert_S\lesssim 1\ ,
\end{equation*}
where $\mathcal{E}_2(t)=\partial_t\mathcal{E}_3(t)$.
Assuming in addition that \eqref{BA+} holds we have 
\begin{equation*}
T^{1+2\delta}\sup_{T/4\leq t\le T^\ast}\Vert \widetilde{\mathcal{E}_1}(t)\Vert_{S^+}\lesssim 1\ ,
\qquad T^{1/10}\sup_{T/4\le t\le T^\ast}\Vert \mathcal{E}_3(t)\Vert_{S^+}\lesssim 1\ .
\end{equation*}
\end{lemma}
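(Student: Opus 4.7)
The strategy is a normal-form (integration by parts in time) reduction that exploits the non-resonance $\omega := |p|-|q|+|r|-|s|\neq 0$ carried by every mode of $\widetilde{\mathcal{N}}^t$, together with the smallness of $\partial_t F$ provided by the $X_T$ norm. Since \eqref{Initial} restricts the Fourier modes in $y$ to odd integers, $\omega$ is even and thus $|\omega|\geq 2$ whenever $\omega\neq 0$; hence the symbol $(i\omega)^{-1}\mathbf{1}_{\omega\neq 0}$ is bounded. Writing $e^{it\omega} = \partial_t(e^{it\omega}/(i\omega))$, I set
\begin{equation*}
\mathcal{F}\mathcal{E}_3^t[F,G,H](\xi,p) := \sum_{\substack{(p,q,r,s)\in\mathcal{M}\\ \omega\neq 0}} \frac{e^{it\omega}}{i\omega}\,\mathcal{F}_x\bigl(\mathcal{I}^t[F_q,G_r,H_s]\bigr)(\xi),
\end{equation*}
$\mathcal{E}_2^t := \partial_t\mathcal{E}_3^t$, and $\widetilde{\mathcal{E}}_1^t := \widetilde{\mathcal{N}}^t - \mathcal{E}_2^t$. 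Direct differentiation under the sum yields
\begin{equation*}
\mathcal{F}\widetilde{\mathcal{E}}_1^t(\xi,p) = -\sum_{\omega\neq 0}\frac{e^{it\omega}}{i\omega}\,\partial_t\bigl[\mathcal{F}_x(\mathcal{I}^t[F_q(t),G_r(t),H_s(t)])(\xi)\bigr],
\end{equation*}
where $\partial_t$ may act on the dispersive kernel $\mathcal{I}^t$ or on the time-dependent inputs.

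For $\mathcal{E}_3^t$, inserting the bounded extra multiplier $(i\omega)^{-1}$ into the trilinear structure and repeating the proof of Lemma~\ref{Nestimate} verbatim (the summation inequality \eqref{sump}, the dispersive bound \eqref{dispersiveest}, and the transfer lemma from the appendix) gives $\|\mathcal{E}_3^t\|_{S^{(+)}} \lesssim (1+|t|)^{-1}\|F\|_{S^{(+)}}\|G\|_{S^{(+)}}\|H\|_{S^{(+)}}$. Combined with the $X_T^{(+)}$-bounds $\|F(t)\|_{S^{(+)}}\lesssim (1+|t|)^{O(\delta)}$, this produces a decay $(1+|t|)^{-1+O(\delta)}$, which comfortably dominates the target $(1+|t|)^{-1/10}$.

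For $\widetilde{\mathcal{E}}_1^t$ I split into two sub-cases according to where $\partial_t$ lands. If it hits one of $F,G,H$, then the $X_T$-bound $\|\partial_t F\|_S \lesssim (1+|t|)^{-1+3\delta}$ (resp.\ the $X_T^+$-bound $\|\partial_t F\|_{S^+}\lesssim (1+|t|)^{-1+7\delta}$) combined with Lemma~\ref{Nestimate} produces decay $\lesssim (1+|t|)^{-2+O(\delta)}$, strictly stronger than the required $(1+|t|)^{-1-2\delta}$. If instead $\partial_t$ lands on the kernel $\mathcal{I}^t$, I use
\begin{equation*}
\partial_t\mathcal{I}^t[f,g,h] = i\bigl(\mathcal{I}^t[\partial_{xx}f,g,h] - \mathcal{I}^t[f,\partial_{xx}g,h] + \mathcal{I}^t[f,g,\partial_{xx}h] - \partial_{xx}\mathcal{I}^t[f,g,h]\bigr)
\end{equation*}
together with the frequency localization $F,G,H=Q_{\leq T^{1/6}}(\cdot)$; keeping the two $x$-frequencies that carry the Laplacian together and invoking the bilinear Strichartz estimate of Lemma~\ref{1dBE} (equivalently a Coifman--Meyer-type bound via Lemma~\ref{CM}), exactly as in the treatment of the $(A,B,C)\in\Lambda$ case of Lemma~\ref{highfreq}, absorbs the apparent loss and closes the required $(1+|t|)^{-1-2\delta}$ estimate in both $S$ and $S^+$.

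The main obstacle is precisely this last sub-case: a crude bound $\|\partial_{xx}Q_{\leq T^{1/6}}F\|_{L^2}\lesssim T^{1/3}\|F\|_{L^2}$ followed by Lemma~\ref{Nestimate} loses $T^{1/3}$, which is fatal for the desired decay. One must instead capture the bilinear cancellation inherent to $\mathcal{I}^t$ between the two factors that share the extra derivatives through Lemma~\ref{1dBE} (or Lemma~\ref{CM}), parallel to the proof of Lemma~\ref{highfreq}. Once this refined multilinear input is in place, the $S$ and $S^+$ bounds close by the same dyadic summation as there, and the proposition follows.
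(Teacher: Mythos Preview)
Your normal-form setup and the handling of $\mathcal{E}_3$ and of the terms where $\partial_t$ hits an input are fine. The gap is in the remaining piece, where $\partial_t$ lands on the kernel $\mathcal{I}^t$; neither of your proposed fixes closes it.

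First, Lemma~\ref{1dBE} is a space-time $L^2_{x,t}$ estimate; it cannot produce the pointwise-in-$t$ bound $\sup_t (1+|t|)^{1+2\delta}\Vert\widetilde{\mathcal{E}}_1(t)\Vert_S\lesssim 1$ that the lemma demands. Moreover, that bilinear bound requires frequency separation ($\lambda\ge 10\mu$), which is exactly what is built into the set $\Lambda$ in Lemma~\ref{highfreq}; here all three inputs are merely $Q_{\le T^{1/6}}$, with no separation at all, so the analogy with the $\Lambda$-case breaks down. Second, the Coifman--Meyer route via Lemma~\ref{CM} does not help either: the relevant symbol is $m(\eta,\kappa)=2i\eta\kappa\,\varphi((10T)^{-1/6}\eta)\varphi((10T)^{-1/6}\kappa)$, and $\Vert\mathcal{F}^{-1}m\Vert_{L^1(\R^2)}\sim T^{1/3}$, which is exactly the loss you identified. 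Finally, in your identity for $\partial_t\mathcal{I}^t$ each summand carries a single $\partial_{xx}$, so there are not ``two $x$-frequencies that carry the Laplacian'' to pair.

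What the paper does instead is introduce the phase-space cutoff $\varphi(t^{1/4}\eta\kappa)$ and split $\mathcal{I}^t=\mathcal{O}_1^t+\mathcal{O}_2^t$. The normal form (your integration by parts in $t$) is performed only on $\mathcal{O}_2^t$, where $|\eta\kappa|\lesssim t^{-1/4}$ so that $(\partial_t\mathcal{O}_2^t)$ gains an honest $t^{-1/4}$ and no derivative loss occurs. On $\mathcal{O}_1^t$ one does not integrate by parts in $t$ at all; instead one integrates by parts in $\kappa$, using $e^{2it\eta\kappa}=(2it\eta)^{-1}\partial_\kappa e^{2it\eta\kappa}$ and the lower bound $|t\eta|\gtrsim T^{7/12}$ on the support of $1-\varphi(t^{1/4}\eta\kappa)$ (coming from the $Q_{\le T^{1/6}}$ localization), together with a spatial decomposition $f=f_c+f_e$ with $f_c=\varphi(x/t^{1/4})f$ to control the resulting $\widehat{xf}$. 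This is the missing ingredient; without some device of this kind, the kernel-derivative term cannot be brought below $(1+|t|)^{-1-2\delta}$ in $S$.
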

\begin{proof}
To prove this lemma, we start by decomposing $\widetilde{\mathcal N}^t$ along the non-resonant level sets as follows:
Set 
$$F^a=Q_{\le T^{1/6}}F^a,\quad F^b=Q_{\le T^{1/6}}F^b,\quad F^c=Q_{\le T^{1/6}}F^c\ ,$$
\begin{align}
\mathcal{F}\widetilde{\mathcal{N}}^t[F^a,F^b,F^c](\xi,p)&=
\sum_{\omega\ne 0}\sum_{(p,q,r,s)\in\Gamma_\omega}{\mathrm{e}}^{it\omega}\left(\mathcal{O}^t_1[F^a_q,F^b_r,F^c_s](\xi)+\mathcal{O}^t_2[F^a_q,F^b_r,F^c_s](\xi)\right)\ ,\\
\mathcal{O}^t_1[f^a,f^b,f^c](\xi)&:=\int_{\mathbb{R}^2}{\mathrm{e}}^{2it\eta\kappa}(1-\varphi(t^{\frac{1}{4}}\eta\kappa))\widehat{f^a}(\xi-\eta)\overline{\widehat{f^b}}(\xi-\eta-\kappa)\widehat{f^c}(\xi-\kappa)d\eta d\kappa\ ,
\notag\\
\mathcal{O}^t_2[f^a,f^b,f^c](\xi)&:=\int_{\mathbb{R}^2}{\mathrm{e}}^{2it\eta\kappa}\varphi(t^{\frac{1}{4}}\eta\kappa)\widehat{f^a}(\xi-\eta)\overline{\widehat{f^b}}(\xi-\eta-\kappa)\widehat{f^c}(\xi-\kappa)d\eta d\kappa\ .\notag
\end{align}
We may rewrite for $\omega\neq0$,
\begin{equation}\label{O2}
\begin{split}
&{\mathrm{e}}^{it\omega}\mathcal{O}_{2}^t[f^a, f^b, f^c]=\partial_t\left(\frac{{\mathrm{e}}^{it\omega}}{i\omega}\mathcal{O}_2^t[f^a, f^b, f^c]\right)-\frac{{\mathrm{e}}^{it\omega}}{i\omega}\left(\partial_t\mathcal{O}_2^t\right)[f^a, f^b, f^c]\\
&\qquad-\frac{{\mathrm{e}}^{it\omega}}{i\omega}\mathcal{O}_2^t[\partial_t f^a, f^b, f^c]-\frac{{\mathrm{e}}^{it\omega}}{i\omega}\mathcal{O}_2^t[f^a,\partial_t f^b, f^c]-\frac{{\mathrm{e}}^{it\omega}}{i\omega}\mathcal{O}_2^t[f^a, f^b,\partial_t f^c]\\
&\qquad:=\partial_t\left(\frac{{\mathrm{e}}^{it\omega}}{i\omega}\mathcal{O}_2^t[f^a, f^b, f^c]\right)+{\mathrm{e}}^{it\omega}\mathcal{L}^t[f^a, f^b, f^c]\ ,
\end{split}
\end{equation}
where
$$\left(\partial_t\mathcal{O}_2^t\right)[f^a, f^b, f^c]:=\int_{\mathbb{R}^2}\partial_t\left( {\mathrm{e}}^{2it\eta\kappa}\varphi(t^{\frac{1}{4}}\eta\kappa)\right)\widehat{f^a}(\xi-\eta)\overline{\widehat{f^b}}(\xi-\eta-\kappa)\widehat{f^c}(\xi-\kappa)d\eta d\kappa\ .$$
Thus we define $\mathcal{E}^t_2[F^a,F^b,F^c]=\partial_t \mathcal{E}^t_3[F^a,F^b,F^c]$ with
\begin{equation}\label{S3}
\mathcal{F}\mathcal{E}^t_3[F^a,F^b,F^c](\xi,p):=\sum_{\omega\ne 0}\sum_{(p,q,r,s)\in\Gamma_\omega}\left(\frac{{\mathrm{e}}^{it\omega}}{i\omega}\mathcal{O}_2^t[F_q^a,F _r^b, F_s^c]\right)\ ,
\end{equation}
and define $ \widetilde{\mathcal{E}}^t_1$ with $\mathcal{O}_1^t$ and the last four terms in \eqref{O2},
\begin{equation}\label{S1}
\mathcal{F}\widetilde{\mathcal{E}}^t_1(\xi,p):=\sum_{\omega\ne 0}\sum_{(p,q,r,s)\in\Gamma_\omega}{\mathrm{e}}^{it\omega}\big(\mathcal{O}_1^t[F_q^a,F_r^b,F_s^c]+\mathcal{L}^t[F_q^a,F_r^b,F_s^c]\big)\ .
\end{equation}

\medskip
\noindent {\bf 1. Estimation of $\mathcal{E}_3(t)$.} We define the multiplier appearing in the definition of $\mathcal{O}_2^t$ by
\begin{equation*}
m(\eta,\kappa):=\varphi(t^{\frac{1}{4}}\eta\kappa)\varphi((10T)^{\frac{-1}{6}}\eta)\varphi((10T)^{\frac{-1}{6}}\kappa)\ .
\end{equation*}
>From Lemma~\ref{multiplier} at the end of this subsection, it is bounded by $\|\mathcal{F}_{\eta\kappa}\widetilde{m}\|_{L^1(\R^2)}\lesssim t^{\frac{\delta}{100}}$.  
Applying Lemma ~\ref{CM}, we get
\begin{align*}
\Vert\mathcal{O}_2^t[f^a,f^b,f^c]\Vert_{L_{\xi}^2}&\lesssim (1+\vert t\vert)^{\frac{\delta}{100}}\min_{\{\alpha,\beta,\gamma\}=\{a,b,c\}}\Vert f^{\alpha}\Vert_{L_x^2}\Vert {\mathrm{e}}^{it\partial_{xx}}f^{\beta}\Vert_{L_x^\infty}\Vert {\mathrm{e}}^{it\partial_{xx}}f^{\gamma}\Vert_{L_x^\infty}\ .
\end{align*}
Then
\begin{align*}
\Vert \mathcal{E}_3(t)\Vert_{L^2_{x,y}}&\lesssim \Big\Vert\sum_{\omega\ne 0}\sum_{(p,q,r,s)\in\Gamma_\omega}\left(\frac{{\mathrm{e}}^{it\omega}}{i\omega}\mathcal{O}_2^t[F_q^a,F _r^b, F_s^c]\right)\Big\Vert_{L_x^2\ell_p^2}\\
&\lesssim (1+\vert t\vert)^{\frac{\delta}{100}}\min_{\{\alpha,\beta,\gamma\}=\{a,b,c\}}\Big\Vert\sum_{p-q+r-s=0} \Vert F_q^{\alpha}\Vert_{L_x^2}\Vert {\mathrm{e}}^{it\partial_{xx}}F_r^{\beta}\Vert_{L_x^\infty}\Vert {\mathrm{e}}^{it\partial_{xx}}F_s^{\gamma}\Vert_{L_x^\infty}\Big\Vert_{\ell_p^2}\\
\text{using \eqref{sump}}\\
&\lesssim (1+\vert t\vert)^{\frac{\delta}{100}}\min_{\{\alpha,\beta,\gamma\}=\{a,b,c\}}\Vert F^{\alpha}\Vert_{L_{x,y}^2}\sum_r\Vert {\mathrm{e}}^{it\partial_{xx}}F_r^{\beta}\Vert_{L_x^\infty}\sum_s\Vert {\mathrm{e}}^{it\partial_{xx}}F_s^{\gamma}\Vert_{L_x^\infty}\\
\text{ using \eqref{dispersiveest} }\\
&\lesssim (1+\vert t\vert)^{-1+\frac{\delta}{100}}\min_{\{\alpha,\beta,\gamma\}=\{a,b,c\}}\Big(\Vert F^{\alpha}\Vert_{L_{x,y}^2}\sum_r\big(\Vert F_r^{\beta}\Vert_{L^2_x}^{1/2}\Vert xF_r^{\beta}\Vert_{L^2_x}^{1/2}\big)\\
&\qquad\qquad\qquad\qquad\qquad\qquad\qquad\qquad\qquad\qquad\cdot\sum_s\big(\Vert F_s^{\gamma}\Vert_{L^2_x}^{1/2}\Vert xF_s^{\gamma}\Vert_{L^2_x}^{1/2}\big)\Big)\ .
\end{align*}
Noticing that for the last inequality, we have 
\begin{align*}
\sum_r(\vert a_r\vert^{1/2}\vert b_r\vert^{1/2})&\leq \sum_r(\vert a_r\vert^{1/2}\vert r\vert^{\theta}\vert r\vert^{-\theta}\vert b_r\vert^{1/2})\\
&\leq \Vert a_r\Vert_{h_r^{2\theta}} ^{1/4}(\sum_r\vert r\vert^{-2\theta})^{1/2}\Vert b_r\Vert_{\ell_r^2}\\
&\lesssim \Vert a_r\Vert_{h_r^{2\theta}} ^{1/2}\Vert b_r\Vert_{\ell_r^2}
\end{align*}
with $\theta>1/2$.
Then
\begin{align}
\Vert \mathcal{E}_3(t)\Vert_{L^2_{x,y}}&\lesssim (1+\vert t\vert)^{-1+\frac{\delta}{100}}\min_{\{\alpha,\beta,\gamma\}=\{a,b,c\}}\Vert F^{\alpha}\Vert_{L_{x,y}^2}\Vert F^{\beta}\Vert_{H^{2\theta}_{x,y}}^{1/2}\Vert xF^{\beta}\Vert_{L^2_{x,y}}^{1/2}\Vert F^{\gamma}\Vert_{H^{2\theta}_{x,y}}^{1/2}\Vert xF^{\gamma}\Vert_{L^2_{x,y}}^{1/2}\notag\\
&\lesssim  (1+\vert t\vert)^{-1+\frac{\delta}{100}}\min_{\{\alpha,\beta,\gamma\}=\{a,b,c\}}\Vert F^{\alpha}\Vert_{L_{x,y}^2}\Vert F^{\beta}\Vert_S\Vert F^{\gamma}\Vert_S\ .
\end{align}
Therefore, an application of Lemma~\ref{SS+} shows that the $S$ norms of $S_3$ is controlled as follows, 
\begin{equation}
\Vert \mathcal{E}_3(t)\Vert_S\lesssim  (\vert T\vert)^{-1+\frac{\delta}{100}}\Vert F^a\Vert_S\Vert F^b\Vert_S\Vert F^c\Vert_S\lesssim (\vert T\vert)^{-1+\frac{\delta}{100}}\ ,
\end{equation}
the last inequality comes from \eqref{BA}. Combining with inequality \eqref{BA+}, we can also gain
\begin{equation}
\Vert S_3(t)\Vert_{S^+}\lesssim (\vert T\vert)^{-1+\frac{\delta}{100}}\ .
\end{equation}

\medskip
\noindent {\bf 2. Estimation of $\widetilde{\mathcal{E}}_1(t)$.} Again, we need to control the $L^2$ norm first, and then the $S$ norm. $\widetilde{\mathcal{E}}_1(t)$ is composed by two parts, one is from $\mathcal{O}_1^t$, and the other one $\mathcal{L}^t$ is from the last four terms in \eqref{O2}, 
\begin{equation}
\mathcal{F}\widetilde{\mathcal{E}}^t_1[F^a,F^b,F^c](\xi,p):=\sum_{\omega\ne 0}\sum_{(p,q,r,s)\in\Gamma_\omega}{\mathrm{e}}^{it\omega}\big(\mathcal{O}_1^t[F_q^a,F_r^b,F_s^c]+\mathcal{L}^t[F_q^a,F_r^b,F_s^c]\big)\ ,
\end{equation}
with
$$i\omega \mathcal{L}^t[f^a,f^b,f^c]:=-\left(\partial_t\mathcal{O}_2^t\right)[f^a, f^b, f^c]-\mathcal{O}_2^t[\partial_t f^a, f^b, f^c]-\mathcal{O}_2^t[f^a,\partial_t f^b, f^c]-\mathcal{O}_2^t[f^a, f^b,\partial_t f^c]\ .$$

The term $\sum\limits_{\omega\ne 0}\sum_{(p,q,r,s)\in\Gamma_\omega}{\mathrm{e}}^{it\omega}\mathcal{L}^t[F_q^a,F_r^b,F_s^c]$ can be estimated similarly as $\Vert \mathcal{E}_3(t)\Vert_S$. Actually, we may gain a better estimate here, since for the first term, we can get an extra $T^{-1/4}$ which comes from the $t$ derivative of the multiplier, while for the other three terms, by the definition of $X_T$ norm, we have $\Vert\partial_t F\Vert_S\leq T^{-1+3\delta}\Vert F\Vert_{X_T}$. Let us focus on 
$$\sum\limits_{\omega\ne 0}\sum\limits_{(p,q,r,s)\in\Gamma_\omega}{\mathrm{e}}^{it\omega}\mathcal{O}_1^t[F_q^a,F_r^b,F_s^c]\ .$$

We claim that
\begin{equation}\label{O1small}
\Big\Vert\sum\limits_{\omega\ne 0}\sum\limits_{(p,q,r,s)\in\Gamma_\omega}{\mathrm{e}}^{it\omega}\mathcal{O}^t_1[F_q^a,F_r^b,F_s^c]\Big\Vert_{L^2}\lesssim   T^{-1-\delta}\min_{\{\alpha,\beta,\gamma\}=\{a,b,c\}}\Vert F^\alpha\Vert_{L^2_{x,y}}\Vert F^\beta\Vert_{S}\Vert F^\gamma\Vert_{S}\ .
\end{equation}
As we did for $\mathcal{O}_2^t$, we still have
\begin{equation}
\Vert\mathcal{O}^t_1[f^a,f^b,f^c]\Vert_{L^2}\lesssim  (1+|t|)^{\delta/100}\min_{\{\alpha,\beta,\gamma\}=\{a,b,c\}}\Vert f^\alpha\Vert_{L^2}\Vert{\mathrm{e}}^{it\partial_{xx}}f^\beta\Vert_{L^\infty}\Vert {\mathrm{e}}^{it\partial_{xx}}f^\gamma\Vert_{L^\infty}\ .
\end{equation}

We then need to estimate $\Vert {\mathrm{e}}^{it\partial_{xx}}f\Vert_{L^\infty_x}$. We notice that for all $\frac12<\alpha\leq1$,
\begin{equation}\label{o11}
\Vert {\mathrm{e}}^{it\partial_{xx}}f\Vert _{L^\infty(\R)}
\lesssim \langle t\rangle^{-\frac12}\Vert f\Vert _{L^1(\R)}
\lesssim  \langle t\rangle^{-\frac12}\Vert\langle x\rangle^{-\alpha}\langle x\rangle^{\alpha}f\Vert_{L^1(\R)}
\lesssim \langle t\rangle^{-\frac12}\Vert\langle x\rangle^{\alpha}f\Vert_{L^2(\R)} \ ,
\end{equation}
we may take $\alpha=7/9$, then for $f$ supported on $\vert x\vert\geq R$, 
\begin{align}\label{d1}
\Vert {\mathrm{e}}^{it\partial_{xx}}f\Vert_{L^\infty}\lesssim \langle t\rangle^{-\frac12}R^{-1/9}\Vert\langle x\rangle^{8/9}f\Vert_{L^2}\ .
\end{align}

Therefore, we decompose $f=f_c+f_e$ with $f_c(x):=\varphi(\frac{x}{T^{1/4}})f(x)$, then 
$$\mathcal{O}_1^t[f^a,f^b,f^c]=\mathcal{O}_1^t[f_c^a+{f^a_e},f_c^b+{f^b_e},f_c^c+{f^c_e}]\ .$$
then by \eqref{d1}, if one of $f^a,f^b,f^c$ is supported on $|x|\geq 2T^{1/4}$, for example, $f^b=f_e^b$, then
\begin{align*}
\Vert \mathcal{O}_1^t[f^a,f_e^b,f^c]\Vert_{L^2_x}=&\lesssim(1+|t|)^{\delta/100}\Vert f^a\Vert_{L^2}\Vert {\mathrm{e}}^{it\partial_{xx}}f_e^b\Vert_{L^\infty}\Vert {\mathrm{e}}^{it\partial_{xx}}f^c\Vert_{L^\infty}\\
&\lesssim (1+|t|)^{\delta/100}\Vert f^a\Vert_{L^2}\Vert {\mathrm{e}}^{it\partial_{xx}}f_e^b\Vert_{L^\infty}\Vert {\mathrm{e}}^{it\partial_{xx}}f^c\Vert_{L^\infty}\\
&\lesssim T^{-1-1/36+\delta/100}\Vert f^a\Vert_{L^2}\Vert \langle x\rangle^{8/9}f_e^b\Vert_{L^2}\Vert\langle x\rangle^{7/9}f^c\Vert_{L^2}\ ,
\end{align*}
in the last inequality comes from \eqref{o11} and \eqref{d1}. Then using \eqref{sump},
\begin{equation}
\begin{split}
&\Big\Vert\sum\limits_{\omega\ne 0}\sum\limits_{(p,q,r,s)\in\Gamma_\omega}{\mathrm{e}}^{it\omega}\mathcal{O}^t_1[F_q^a,F_{r,e}^b,F_s^c]\Big\Vert_{L^2}\\
&\lesssim  T^{-1-1/36+\delta/100}\Vert F^a\Vert_{L^2}\sum_r\Vert\langle x\rangle^{8/9}F_r^b\Vert_{L^2_x}\sum_s\Vert\langle x\rangle^{7/9}F_s^c\Vert_{L^2_x}\ .
\end{split}
\end{equation}
For $0<\alpha<1$, 
\begin{equation}\label{e11}
\sum_r\Vert x^{\alpha}F_r\Vert_{L^2_x}\lesssim \Vert F\Vert_{S}\ ,
\end{equation}
indeed,
\begin{align*}
\sum_r\Vert \langle x\rangle^{\alpha}F_r\Vert_{L^2_x}&=\sum_r\Vert( \langle x\rangle F_r)^{\alpha}F_r^{1-\alpha}\Vert_{L^2_x}\leq \sum_r \Vert \langle x\rangle F_r\Vert_{L^2}^{\alpha}\Vert F_r\Vert_{L^2}^{1-\alpha}\\
& \leq \sum_r \Vert \langle x\rangle F_r\Vert_{L^2}^{\alpha}\langle r\rangle^s\Vert F_r\Vert_{L^2}^{1-\alpha}\langle r\rangle^{-s}\leq\Vert \langle x\rangle F\Vert_{L_{x,y}^2}^{\alpha}\Vert F\Vert_{H_{x,y}^{\frac{s}{1-\alpha}}}^{1-\alpha}\leq \Vert F\Vert_S\ ,
\end{align*}
with $s>1/2$. Thus
\begin{equation}
\begin{split}
\Big\Vert\sum\limits_{\omega\ne 0}\sum\limits_{(p,q,r,s)\in\Gamma_\omega}{\mathrm{e}}^{it\omega}\mathcal{O}^t_1[F_q^a,F_{r,e}^b,F_s^c]\Big\Vert_{L^2}\lesssim  T^{-1-1/36+\delta/100}\Vert F^a\Vert_{L^2}\Vert F^b\Vert_{S}\Vert F^c\Vert_{S}\ .
\end{split}
\end{equation}
Let us turn to the case $ \mathcal{O}_1^t[f^a,f_c^b,f_c^c]$. By replacing ${\mathrm{e}}^{2it\eta\kappa}$ by $(2it\eta)^{-1}\partial_\kappa ({\mathrm{e}}^{2it\eta\kappa})$, we can rewrite $\mathcal{O}_1^t$ as
\begin{equation}
\begin{split}
&\mathcal{O}^t_1[f^a,f^b,f^c](\xi)=\int_{\mathbb{R}^2}{\mathrm{e}}^{2it\eta\kappa}(1-\varphi(t^{\frac{1}{4}}\eta\kappa))\widehat{f^a}(\xi-\eta)\overline{\widehat{f^b}}(\xi-\eta-\kappa)\widehat{f^c}(\xi-\kappa)d\eta d\kappa\\
&\qquad=\int_{\mathbb{R}^2}(2it\eta)^{-1}\partial_\kappa ({\mathrm{e}}^{2it\eta\kappa})(1-\varphi(t^{\frac{1}{4}}\eta\kappa))\widehat{f^a}(\xi-\eta)\overline{\widehat{f^b}}(\xi-\eta-\kappa)\widehat{f^c}(\xi-\kappa)d\eta d\kappa\\
&\qquad=\int_{\mathbb{R}^2}(2it\eta)^{-1}{\mathrm{e}}^{2it\eta\kappa}\partial_\kappa \big((1-\varphi(t^{\frac{1}{4}}\eta\kappa))\widehat{f^a}(\xi-\eta)\overline{\widehat{f^b}}(\xi-\eta-\kappa)\widehat{f^c}(\xi-\kappa)\big)d\eta d\kappa\ .
\end{split}
\end{equation}
Firstly, it is easy to deal with the case when the $\kappa$ derivative falls on $1-\varphi$, which turns out to be
$$(2i)^{-1}t^{-3/4}\int_{\mathbb{R}^2}{\mathrm{e}}^{2it\eta\kappa}\varphi'(t^{\frac{1}{4}}\eta\kappa)\widehat{f^a}(\xi-\eta)\overline{\widehat{f^b}}(\xi-\eta-\kappa)\widehat{f^c}(\xi-\kappa)d\eta d\kappa\ ,$$ 
then we get the required estimate with the similar strategy we used to estimate $\mathcal{O}_2^t$ since $\varphi'$ admits similar properties as $\varphi$. 

For the other case, we calculate the case when $\kappa$ derivative falls on $f^b$ for example, which is denoted by $\mathcal{O}_{1,b}$,
\begin{equation}\begin{split}
\mathcal{O}_{1,b}&:=\int_{\mathbb{R}^2}(2it\eta)^{-1}{\mathrm{e}}^{2it\eta\kappa}(1-\varphi(t^{\frac{1}{4}}\eta\kappa))\widehat{f^a}(\xi-\eta)\partial_\kappa\big( \overline{\widehat{f^b}}(\xi-\eta-\kappa)\big)\widehat{f^c}(\xi-\kappa)d\eta d\kappa\\
&=\int_{\mathbb{R}^2}(2it\eta)^{-1}{\mathrm{e}}^{2it\eta\kappa}(1-\varphi(t^{\frac{1}{4}}\eta\kappa))\widehat{f^a}(\xi-\eta)\overline{\widehat{xf^b}}(\xi-\eta-\kappa)\widehat{f^c}(\xi-\kappa)d\eta d\kappa\ .
\end{split}\end{equation}
Noticing that on the support of the integration, $ |t||\eta|\gtrsim |t|^{-3/4}|\kappa|^{-1}\gtrsim T^{-7/12}$, we still have an $L^2$ estimate
\begin{equation}\label{o12}
\Vert \mathcal{O}_{1,b}\Vert_{L^2_\xi}\lesssim T^{-7/12+\frac{\delta}{100}}\Vert f^a \Vert_{L^2}\cdot\Vert{\mathrm{e}}^{it\partial_{xx}}(xf^b)\Vert_{L^\infty}\cdot \Vert{\mathrm{e}}^{it\partial_{xx}}f^c\Vert_{L^\infty}\ .
\end{equation}
By \eqref{o11}, for $f$ supported on $\vert x\vert\leq T^{1/4}$, we have
\begin{align}\label{d2}
\Vert {\mathrm{e}}^{it\partial_{xx}}xf\Vert_{L^\infty}&\lesssim \langle t\rangle^{-\frac12}T^{1/4}\Vert \langle x\rangle^{7/9} f\Vert_{L^2}\ .
\end{align}
using \eqref{o11} and \eqref{d2},
\begin{align*}
\Vert\mathcal{O}_{1,b}\Vert_{L^2_\xi}&\lesssim T^{-7/12+\frac{\delta}{100}}\Vert f^a \Vert_{L^2}\cdot\Vert{\mathrm{e}}^{it\partial_{xx}}(xf_c^b)\Vert_{L^\infty}\cdot \Vert{\mathrm{e}}^{it\partial_{xx}}f^c\Vert_{L^\infty}\\
&\lesssim T^{-4/3+\frac{\delta}{100}}\Vert f^a\Vert_{L_x^2}\Vert \langle x\rangle^{7/9}f^b\Vert_{L_x^2}\Vert \langle x\rangle^{7/9}f^c\Vert_{L_x^2}\ .
\end{align*}
Once again we use \eqref{e11},
\begin{equation}
\begin{split}
&\Big\Vert\sum\limits_{\omega\ne 0}\sum\limits_{(p,q,r,s)\in\Gamma_\omega}{\mathrm{e}}^{it\omega}\mathcal{O}^t_1[F_q^a,F_r^b,F_s^c]\Big\Vert_{L^2}\\&\lesssim   T^{-4/3+\frac{\delta}{100}}\Big\Vert\sum\limits_{\omega\ne 0}\sum\limits_{(p,q,r,s)\in\Gamma_\omega}\Vert F_q^a\Vert_{L_x^2}\Vert \langle x\rangle^{7/9}F_r^b\Vert_{L_x^2}\Vert \langle x\rangle^{7/9}F_s^c\Vert_{L_x^2}\Big\Vert_{\ell_p^2}\\
&\lesssim T^{-4/3+\frac{\delta}{100}} \Vert F^a\Vert_{L_{x,y}^2}\Vert F^b\Vert_S\Vert F^c\Vert_S\ .
\end{split}
\end{equation}
By replacing $F^a$ by $F^b$ or $F^c$, we proved \eqref{O1small} and then the estimate of $\widetilde{\mathcal{E}}_1(t)$.
\end{proof}
\begin{lemma}\label{multiplier}\cite[Remark 3.5]{HPTV}
For $T>1$, $\varphi\in C_c^\infty(\R)$, $\varphi(x)=1$ when $\vert x \vert\leq 1$ and $\varphi(x)=0$ when $\vert x\vert\geq2$, we define for $T/2\leq t\leq T$,
$$\widetilde{m}(\eta,\kappa):=\varphi(t^{\frac{1}{4}}\eta\kappa)\varphi((10T)^{\frac{-1}{6}}\eta)\varphi((10T)^{\frac{-1}{6}}\kappa)\ .$$
Then $\|\mathcal{F}_{\eta\kappa}\widetilde{m}\|_{L^1(\R^2)}\lesssim t^{\frac{\delta}{100}}$ .
\end{lemma}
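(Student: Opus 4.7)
The plan is to reduce by a linear dilation to a problem with no $t$-dependence in the inner cutoff, and then to evaluate the result via a Littlewood--Paley decomposition in each variable separately. First I would substitute $\eta = t^{-1/8}u$, $\kappa = t^{-1/8}v$: this turns $\varphi(t^{1/4}\eta\kappa)$ into $\varphi(uv)$ while turning the two outer cutoffs into $\varphi(u/R)$ and $\varphi(v/R)$ respectively, with $R := (10T)^{1/6}t^{1/8}\sim T^{7/24}$. Since the $L^1$ norm of the Fourier transform is invariant under linear dilations, this reduces the problem to showing
$$\|\mathcal{F}\bigl(\varphi(uv)\,\varphi(u/R)\,\varphi(v/R)\bigr)\|_{L^1(\R^2)}\lesssim \log R\ .$$

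Next I would apply the telescoping identity $\varphi(u/R) = \varphi(u) + \sum_{M\,\text{dyadic},\,2\le M\le R}\phi(u/M)$ (using $\phi(x) = \varphi(x) - \varphi(2x)$ from Section~2) to split each outer cutoff into $O(\log R)$ pieces $\psi_M$, and analyze the resulting double sum $\sum_{M,N}\varphi(uv)\psi_M(u)\psi_N(v)$ case by case. If both $M, N\ge 2$, the support of $\psi_M(u)\psi_N(v)$ forces $|uv|\ge MN/4\ge 1$ while $\varphi(uv)$ enforces $|uv|\le 2$, so only a finite number of $(M, N)$ survive, each contributing a fixed smooth bump. If exactly one of $M, N$ is $\ge 2$, say $M\ge 2$ and $N=0$, then the anisotropic rescaling $u = Mu'$, $v = v'/M$ converts the expression into $\varphi(u'v')\phi(u')\varphi(v'/M)$, which is smooth and supported in a fixed compact region with all derivatives bounded uniformly in $M$; its Fourier $L^1$ norm is $O(1)$ by standard integration-by-parts, and by scale-invariance so is that of the original piece. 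The $(0, 0)$ piece is a single fixed bump.

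Summing $O(\log R)$ contributions of size $O(1)$ yields $\|\mathcal{F}\widetilde m\|_{L^1}\lesssim \log R\lesssim \log T\le t^{\delta/100}$, which is even a logarithmic improvement over the claimed bound. The main delicate point will be to verify uniformity in $M$ of the Fourier $L^1$ estimate in the mixed case; once one checks that rescaling places the support in a fixed compact set and that derivatives of the cutoff $\varphi(v'/M)$ enter only through factors $M^{-k}$, the argument proceeds by standard bump estimates, and the logarithmic loss in counting dyadic indices is easily absorbed in the $t^{\delta/100}$ tolerance.
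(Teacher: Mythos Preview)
Your proof is correct and takes a genuinely different route from the paper's. The paper rescales so that the outer cutoffs become $\varphi(\eta)\varphi(\kappa)$ and the inner one becomes $\varphi(S\eta\kappa)$ with $S\sim T^{7/12}$; it then proves pointwise bounds $|I|,\ |x_1I|,\ |x_2I|\lesssim 1$ and $|x_1x_2I|\lesssim \log(1+T)$ by integration by parts in the phase variables, together with a crude polynomial bound $(1+|x_1|^2)(1+|x_2|^2)|I|\lesssim T^{7/12}$, and interpolates between these to obtain integrable decay with a $T^{\varepsilon}$ loss. By contrast, you normalize the inner cutoff to $\varphi(uv)$ and absorb the large parameter into the outer cutoffs $\varphi(u/R)\varphi(v/R)$ with $R\sim T^{7/24}$, then Littlewood--Paley decompose each outer cutoff and exploit an anisotropic rescaling $(u,v)\mapsto(Mu',v'/M)$ to reduce every surviving piece to a fixed smooth bump; summing over $O(\log R)$ dyadic indices yields the bound directly. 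Your argument is slightly more conceptual and avoids the interpolation step, giving the sharper $\log T$ bound outright; the paper's argument is more hands-on and computational, requiring no decomposition machinery. Both are valid, and your observation that in the mixed case the rescaled cutoff $\varphi(v'/M)$ contributes only harmless factors $M^{-k}$ to derivatives (indeed, for $M\ge 4$ it is identically $1$ on the support) is exactly what makes the uniformity in $M$ go through.
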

\begin{proof}
$$
\Vert \mathcal{F}_{\eta\kappa}\widetilde{m}\Vert_{L^1(\mathbb{R}^2)}=\Vert I(x_1,x_2)\Vert_{L^1_{x_1,x_2}}\ ,
$$
where
$$
I(x_1,x_2)=\int_{\mathbb{R}^2}{\mathrm{e}}^{ix_1\eta}{\mathrm{e}}^{ix_2\kappa}\varphi(S\eta\kappa)\varphi(\eta)\varphi(\kappa)d\eta d\kappa,\quad S\approx T^\frac{7}{12}\ .
$$
Then one may show that
\begin{equation*}
\vert I(x_1,x_2)\vert+\vert x_1I(x_1,x_2)\vert+\vert x_2I(x_1,x_2)\vert \lesssim 1,\quad
\vert x_1x_2I(x_1,x_2)\vert\lesssim \log(1+T)\ .
\end{equation*}
Indeed, 
\begin{align*}
\vert x_1 I(x_1,x_2)\vert&=\left\vert\int_{\mathbb{R}^2}\frac{1}{i}\partial_{\eta}({\mathrm{e}}^{ix_1\eta}){\mathrm{e}}^{ix_2\kappa}\varphi(S\eta\kappa)\varphi(\eta)\varphi(\kappa)d\eta d\kappa\right\vert\\ 
&=\left\vert\int_{\mathbb{R}^2}{\mathrm{e}}^{ix_1\eta}{\mathrm{e}}^{ix_2\kappa}[S\kappa\varphi'(S\eta\kappa)\varphi(\eta)\varphi(\kappa)+\varphi(S\eta\kappa)\varphi'(\eta)\varphi(\kappa)]d\eta d\kappa\right\vert\\
&\lesssim 1+\left\vert\int_{\mathbb{R}^2}{\mathrm{e}}^{ix_1\eta}{\mathrm{e}}^{ix_2\kappa}(S\kappa\varphi'(S\eta\kappa)\varphi(\eta)\varphi(\kappa))d\eta d\kappa\right\vert\ .
\end{align*}
Notice that $\vert S\eta\kappa\vert\leq 2$, then the second term turns out to be
\begin{align*}
\left\vert\int_{\mathbb{R}^2}{\mathrm{e}}^{ix_1\eta}{\mathrm{e}}^{ix_2\kappa}(S\kappa\varphi'(S\eta\kappa)\varphi(\eta)\varphi(\kappa))d\eta d\kappa\right\vert
\lesssim \int_{D:=\{\vert S\eta\kappa\vert, \vert\eta\vert,\vert\kappa\vert\leq 2\}}\vert S\kappa\vert d\eta d\kappa
\lesssim 1.
\end{align*}
Thus we get the first inequality, and we use the similar strategy to prove the second one.
 \begin{align*}
&\vert x_1x_2I(x_1,x_2)\vert\lesssim\int_{\mathbb{R}^2}\vert\partial_{\eta}\partial_{\kappa}\big(\varphi(S\eta\kappa)\varphi(\eta)\varphi(\kappa)\big)\vert d\eta d\kappa\\
&\qquad\lesssim\int_{D}\vert S\kappa\varphi'(S\eta\kappa)\varphi(\eta)\varphi'(\kappa)\vert+\vert S\kappa S\eta\varphi''(S\eta\kappa)\varphi(\eta)\varphi(\kappa)\vert\\
&\qquad\qquad+\vert S\eta \varphi'(S\eta\kappa)\varphi'(\eta)\varphi(\kappa)\vert +\vert\varphi(S\eta\kappa)\varphi'(\eta)\varphi'(\kappa)\vert+\vert S\varphi'(S\eta\kappa)\varphi(\eta)\varphi(\kappa)\vert d\eta d\kappa\\
&\qquad\lesssim (\int_0^{T^{-7/12}}\int_0^2+\int_{T^{-7/12}}^2\int_0^{\frac{2T^{-7/12}}{\kappa}}) [1+\vert S\kappa\vert +\vert S\eta\vert+\vert S\kappa S\eta\vert+S]d\eta d\kappa\\
&\qquad\lesssim \log(1+T)\ .
\end{align*}
Then
$$(1+|x_1|)(1+|x_2|)|I(x_1,x_2)|\lesssim \log(1+T)\ .$$
One also have a polynomial in $T$ bound
$$(1+|x_1|^2)(1+|x_2|^2)|I(x_1,x_2)|\lesssim T^{7/12}\ .$$
Therefore by interpolation one obtains that
for every $0<\varepsilon<7/12$, there exists $\kappa>1/2$ such that
$$
|I(x_1,x_2)|\lesssim (1+T)^{\varepsilon}(1+|x_1|^2)^{-\kappa}(1+|x_2|^2)^{-\kappa}\ . 
$$
We hence deduce that $\|\mathcal F_{\eta\kappa} \widetilde{m}\|_{L^1(\mathbb{R}^2)}\lesssim t^{\frac{\delta}{100}}$. 
\end{proof}
\subsection{The Resonant Level sets}\mbox{}\\
\indent We now turn to the contribution of the resonant part in \eqref{Dec2},
\begin{equation*}
\mathcal{F}\mathcal{N}_0^t[F,G,H](\xi,p)=\sum_{(p,q,r,s)\in\Gamma_0}\mathcal{F}_x\mathcal{I}^t[F_q(t),G_r(t),H_s(t)](\xi).
\end{equation*}
This term yields the main contribution in Proposition \ref{Nonlinearity} and in particular is responsible for the slowest $1/t$ decay. We show that it gives rise to a contribution which grows slowly in $S$, $S^+$ and that it can be well approximated by the resonant system in the $Z$ norm.

In this subsection, we will bound quantities in terms of
\begin{equation*}
\Vert F\Vert_{\widetilde{Z}_t}:=\Vert F\Vert_Z+(1+\vert t\vert)^{-\delta}\Vert F\Vert_S\ ,                                                    
\end{equation*}
so that $F(t)$ remains uniformly bounded in $\widetilde{Z}_t$ under the assumption of Proposition \ref{Nonlinearity} due to the definition of $X_T$ and $\widetilde{Z}_t$ norm. Our main statement of this subsection is as follows.
\begin{lemma}\label{Res}
Let $t\ge 1$. There holds that
\begin{equation}\label{n01}
\Vert \mathcal{N}_0^t[F^a,F^b,F^c]\Vert_{S}\lesssim (1+\vert t\vert)^{-1}\sum_{\{\alpha,\beta,\gamma\}=\{a,b,c\}}\Vert F^\alpha\Vert_{\widetilde{Z}_t}\cdot \Vert F^\beta\Vert_{\widetilde{Z}_t}\cdot \Vert F^\gamma\Vert_{S}
\end{equation}
and
\begin{equation}\label{n02}
\begin{split}
\Vert \mathcal{N}_0^t[F^a,F^b,F^c]\Vert_{S^+}\lesssim& (1+\vert t\vert)^{-1}\sum_{\{\alpha,\beta,\gamma\}=\{a,b,c\}}\Vert F^\alpha\Vert_{\widetilde{Z}_t}\cdot \Vert F^\beta\Vert_{\widetilde{Z}_t}\cdot \Vert F^\gamma\Vert_{S^+}\\
&+(1+\vert t\vert)^{-1+2\delta}\sum_{\{\alpha,\beta,\gamma\}=\{a,b,c\}}\Vert F^\alpha\Vert_{\widetilde{Z}_t}\cdot \Vert F^\beta\Vert_{S}\cdot \Vert F^\gamma\Vert_{S}.
\end{split}
\end{equation}
Moreover,
\begin{equation}\label{n03}
\Vert \mathcal{N}_0^t[F,G,H]-\frac{\pi}{t}\mathcal{R}[F,G,H]\Vert_{Y^s}\lesssim (1+\vert t\vert)^{-1-20\delta}\Vert F\Vert_S\Vert G\Vert_S\Vert H\Vert_S\ .
\end{equation}
and
\begin{equation}\label{n04}
\Vert \mathcal{N}_0^t[F,G,H]-\frac{\pi}{t}\mathcal{R}[F,G,H]\Vert_{S}\lesssim (1+\vert t\vert)^{-1-20\delta}\Vert F\Vert_{S^+}\Vert G\Vert_{S^+}\Vert H\Vert_{S^+}\ .
\end{equation}
In addition, we also have
\begin{align}
\Vert \mathcal{R}[F^a,F^b,F^c]\Vert_{S}&\lesssim \sum_{\{\alpha,\beta,\gamma\}=\{a,b,c\}}\Vert F^\alpha\Vert_{\widetilde{Z}_t}\cdot \Vert F^\beta\Vert_{\widetilde{Z}_t}\cdot \Vert F^\gamma\Vert_{S}\label{n05}\\
\Vert \mathcal{R}[F^a,F^b,F^c]\Vert_{S^+}\lesssim& \sum_{\{\alpha,\beta,\gamma\}=\{a,b,c\}}\Vert F^\alpha\Vert_{\widetilde{Z}_t}\cdot \Vert F^\beta\Vert_{\widetilde{Z}_t}\cdot \Vert F^\gamma\Vert_{S^+}\label{n06}\\
&+(1+\vert t\vert)^{2\delta}\sum_{\{\alpha,\beta,\gamma\}=\{a,b,c\}}\Vert F^\alpha\Vert_{\widetilde{Z}_t}\cdot \Vert F^\beta\Vert_{S}\cdot \Vert F^\gamma\Vert_{S}\notag.
\end{align}
\end{lemma}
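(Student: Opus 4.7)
The four families of estimates share a common strategy: reduce to $L^2$ bounds via Lemma~\ref{SS+}, exploit the Szegő-type structure that the resonance condition $\Gamma_0$ imposes in the $y$-frequency, and use stationary phase in $(\eta,\kappa)$ at the critical point $(0,0)$ of the phase $2t\eta\kappa$ in
$$\mathcal{F}_x\mathcal{I}^t[f,g,h](\xi) = \int_{\R^2} {\mathrm{e}}^{2it\eta\kappa}\hat f(\xi-\eta)\overline{\hat g}(\xi-\eta-\kappa)\hat h(\xi-\kappa)\, d\eta d\kappa$$
to extract the leading term $\frac{\pi}{t}\hat f(\xi)\overline{\hat g}(\xi)\hat h(\xi)$ (the Hessian has signature zero and determinant $-4t^2$, so the prefactor is $2\pi/|2t|=\pi/t$).

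\emph{Step 1: the trilinear bounds \eqref{n01}, \eqref{n02}.} By Lemma~\ref{SS+} it suffices to estimate $\|\mathcal{N}_0^t[F^a,F^b,F^c]\|_{L^2_{x,y}}$ together with the variants involving $(1-\partial_{xx})^4$ and $x$ on one factor. Expanding in $y$-Fourier and applying \eqref{sump} converts the constraint $p=q-r+s$ into the $\ell^2\ast\ell^1\ast\ell^1$ inequality; the $L^2$ factor is paired with $F^\alpha$, the two $\ell^1$ factors with $F^\beta$, $F^\gamma$. On the $x$-side, the energy bound on $\mathcal{I}^t$ from Lemma~\ref{Nestimate} together with the dispersive estimate \eqref{dispersiveest} converts each $\ell^1_p$ sum into $(1+|t|)^{-1/2}\|\cdot\|_{\widetilde Z_t}$ (using that $Z$-control dominates $\sum_p\|{\mathrm e}^{it\partial_{xx}}F_p\|_{L^\infty_x}$ up to a mild $(1+|t|)^{-\delta}$ loss absorbed into $\widetilde Z_t$). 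This gives \eqref{n01}; \eqref{n02} follows by placing the $(1-\partial_{xx})^4$ weight on whichever input is measured in $S^+$, whence the extra $(1+|t|)^{2\delta}$ term appears from commuting the weight past the $\widetilde{Z}_t$-bounded factors.

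\emph{Step 2: extraction of $\frac{\pi}{t}\mathcal{R}$ and the remainder \eqref{n03}, \eqref{n04}.} Introduce a cutoff $\varphi(t^{1/2-\alpha}\eta)\varphi(t^{1/2-\alpha}\kappa)$ for a small $\alpha>0$, splitting $\mathcal{I}^t$ into a near-diagonal piece $\mathcal{I}^t_{\rm near}$ and a tail $\mathcal{I}^t_{\rm far}$. In $\mathcal{I}^t_{\rm far}$, at least one of $|\eta|,|\kappa|$ is $\gtrsim t^{-1/2+\alpha}$, so integrating by parts against $e^{2it\eta\kappa}$ using $(2it\kappa)^{-1}\partial_\eta$ or $(2it\eta)^{-1}\partial_\kappa$ (exactly as in the treatment of $\mathcal{O}_1^t$ in Lemma~\ref{fastosc}) gains a factor $t^{-\alpha}$ per integration, with the derivatives turning into $x$-multiplications on $F^\bullet$; iterating yields $(1+|t|)^{-1-20\delta}$ decay at the cost of a polynomial number of $S$- or $S^+$-norm derivatives, controllable by Lemma~\ref{multiplier}-type multiplier bounds. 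In $\mathcal{I}^t_{\rm near}$, a Taylor expansion $\hat f(\xi-\eta)=\hat f(\xi)+\eta\int_0^1\hat f'(\xi-s\eta)\,ds$ (and similarly for $g,h$) produces the leading term $\hat f(\xi)\overline{\hat g}(\xi)\hat h(\xi)\cdot\int_{\R^2}{\mathrm e}^{2it\eta\kappa}\varphi\varphi\,d\eta d\kappa=\frac{\pi}{t}\hat f(\xi)\overline{\hat g}(\xi)\hat h(\xi)+O(t^{-1-\alpha'})$, plus a remainder in which $\eta$ or $\kappa$ appears, again converted to $x$-derivatives via Plancherel. Summing over $\Gamma_0$ with the $\ell^2\ast\ell^1\ast\ell^1$ rule delivers \eqref{n03} in $Y^s$ (hence $Z$ by \eqref{ineq1}) and \eqref{n04} in $S$ after promoting via Lemma~\ref{SS+}.

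\emph{Step 3: the estimates \eqref{n05}, \eqref{n06} for $\mathcal{R}$.} Here there is no oscillatory integral: $\widehat{\mathcal{R}[F,G,H]}(\xi,\cdot)$ is a parametric-in-$\xi$ cubic expression of $\Szego$ type on $\T$. The Besov space $B^1$ is an algebra and both projectors $\Pi_\pm$ act continuously on it, hence at each $\xi$,
$$\|\widehat{\mathcal{R}}(\xi,\cdot)\|_{B^1}\lesssim \|\hat F(\xi,\cdot)\|_{B^1}\|\hat G(\xi,\cdot)\|_{B^1}\|\hat H(\xi,\cdot)\|_{B^1},$$
controlling $\mathcal{R}$ in the $Z$ norm. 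Then \eqref{n05} and \eqref{n06} follow by the same $L^2$-to-$S/S^+$ transfer of Lemma~\ref{SS+}: distribute weights on one factor, keep the remaining two in $\widetilde Z_t$ (or $S$ when necessary), and use the Gagliardo--Nirenberg inequality \eqref{GN} to absorb the power loss into the prescribed $(1+|t|)^{2\delta}$ factor.

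\emph{Main obstacle.} The crux is the quantitative gain $(1+|t|)^{-20\delta}$ in \eqref{n03}--\eqref{n04} beyond the leading $1/t$. This requires the cutoff scale in the stationary-phase split to be tuned against the Taylor remainder (which costs derivatives) and the integration-by-parts gain (which costs $x$-weights), so that the resulting loss of $S$- or $S^+$-regularity is paid for by the large exponent $N\ge 13$. Making this balancing act uniform in $\xi$, with constants independent of the dyadic $x$-frequency truncation of Lemma~\ref{highfreq}, is the delicate point.
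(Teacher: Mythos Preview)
Your overall architecture is right --- reduce to $L^2$ via Lemma~\ref{SS+}, exploit the convolution structure in $p$, and extract the leading stationary-phase contribution $\frac{\pi}{t}\mathcal R$ --- but there are two concrete gaps and one genuine methodological difference from the paper.

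\textbf{Gap in Step 1.} The dispersive estimate \eqref{dispersiveest} that you invoke only gives
\[
\|{\mathrm e}^{it\partial_{xx}}f\|_{L^\infty_x}\lesssim |t|^{-1/2}\|f\|_{L^1_x}\lesssim |t|^{-1/2}\|f\|_{L^2}^{1/2}\|xf\|_{L^2}^{1/2},
\]
which after summing in $p$ produces $S$-control, not $Z$-control. The $\widetilde Z_t$ norm on the right of \eqref{n01} is the whole point of the lemma, and it cannot be reached this way. The paper instead uses the sharper pointwise asymptotic
\[
\Big|{\mathrm e}^{it\partial_{xx}}f(x)-c\,t^{-1/2}{\mathrm e}^{-ix^2/4t}\,\hat f(-x/2t)\Big|\lesssim |t|^{-3/4}\|xf\|_{L^2},
\]
so that $\|{\mathrm e}^{it\partial_{xx}}f\|_{L^\infty}\lesssim |t|^{-1/2}\sup_\xi|\hat f(\xi)|+|t|^{-3/4}\|xf\|_{L^2}$. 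Only through the term $\sup_\xi|\hat f(\xi)|$ does the $Z$ norm enter (via the embedding $B^1_{1,1}\hookrightarrow$ Wiener algebra, giving $\sup_\xi\sum_p|\hat F_p(\xi)|\lesssim\|F\|_Z$). Your sentence ``$Z$-control dominates $\sum_p\|{\mathrm e}^{it\partial_{xx}}F_p\|_{L^\infty_x}$'' is asserting exactly the conclusion that requires this ingredient.

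\textbf{Gap in the $S^+$ bounds.} For \eqref{n02} and \eqref{n06} you need to feed Lemma~\ref{SS+} the hypothesis \eqref{YB} with $\mathcal B=\widetilde Z_t$, i.e.\ you must prove
\[
\|(1-\partial_{xx})^4F\|_{\widetilde Z_t}+\|xF\|_{\widetilde Z_t}\lesssim (1+|t|)^{-\delta}\|F\|_{S^+}+(1+|t|)^{2\delta}\|F\|_S.
\]
This is a nontrivial interpolation (the paper proves it by a dyadic splitting in the physical variable $x$ at scale $R\sim (1+M^2)(1+|p|^2)^sT^{2\delta}$). Your phrase ``commuting the weight past the $\widetilde Z_t$-bounded factors'' does not account for this; without it the transfer lemma does not produce the structure of \eqref{n02}.

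\textbf{Different route in Step 2.} For \eqref{n03}--\eqref{n04} you split in the frequency variables $(\eta,\kappa)$ and Taylor-expand $\hat f(\xi-\eta)$; the paper instead localizes in physical space, writing $F=F_c+F_e$ with $F_c=\varphi(x/t^{1/4})F$, and for the compactly supported pieces computes the oscillatory integral \emph{exactly}:
\[
\mathcal F_x\mathcal I^t[f,g,h](\xi)=\frac{\pi}{t}\int_{\R^3}f(x_1)\bar g(x_2)h(x_3)\,{\mathrm e}^{-i\xi(x_1-x_2+x_3)}{\mathrm e}^{-i\frac{(x_1-x_2)(x_3-x_2)}{2t}}\,dx,
\]
after which the error $|{\mathrm e}^{i\phi}-1|$ with $|\phi|\lesssim t^{-1/2}$ on the support of $F_c\otimes G_c\otimes H_c$ gives the extra decay directly. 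This is cleaner and avoids the bookkeeping of iterated integration by parts; in particular it makes transparent why only one $x$-weight per input (hence $S$, not higher) suffices for \eqref{n03}. Your frequency-side scheme could likely be made to work, but as written it does not explain why \eqref{n03} holds with only $\|\cdot\|_S$ on the right while \eqref{n04} needs $\|\cdot\|_{S^+}$.
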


\begin{proof}
As before, we will study the $L^2$ norm and then apply Lemma~\ref{SS+} to get the $S^{(+)}$ norm estimate. Using \eqref{sump}, we have
\begin{align*}
\Vert \mathcal{N}_0^t[F^a,F^b,F^c]\Vert_{L_{x,y}^2}&\leq \Big\Vert \sum_{(p,q,r,s)\in\Gamma_0}\vert {\mathrm{e}}^{it\partial_{xx}}F^a_q\vert\cdot\vert {\mathrm{e}}^{it\partial_{xx}}F^b_r\vert\cdot\vert {\mathrm{e}}^{it\partial_{xx}}F^c_s\vert\Big\Vert_{\ell_p^2L_x^2}\notag\\
&\lesssim\min_{\{\alpha,\beta,\gamma\}=\{a,b,c\}}\Vert F^\alpha\Vert_{L_{x,y}^2}\sum_p\Vert {\mathrm{e}}^{it\partial_{xx}}F_p^\beta\Vert_{L_x^{\infty}}\sum_p\Vert {\mathrm{e}}^{it\partial_{xx}}F_p^\gamma\Vert_{L_x^{\infty}}\ .
\end{align*}

To calculate $\sum\limits_p\Vert {\mathrm{e}}^{it\partial_{xx}}F_p\Vert_{L_x^{\infty}}$, we start with the following estimate for $|t|>1$,
\begin{equation}\label{f1}
\Big\vert {\mathrm{e}}^{it\partial_{xx}}f(x)-c\frac{{\mathrm{e}}^{-ix^2/(4t)}}{\sqrt{t}}\widehat{f}(-\frac{x}{2t})\Big\vert\lesssim |t|^{-3/4}\Vert xf\Vert_{L^2}\ , c \text{ is a constant}.
\end{equation}
One may refer to \cite[Lemma 7.3]{HPTV} for the proof of \eqref{f1}. Then
\begin{equation}\label{f2}
\vert {\mathrm{e}}^{it\partial_{xx}}f(x)\vert\lesssim |t|^{-1/2}\sup_\xi \vert\widehat{f}(\xi)\vert+|t|^{-3/4}\Vert xf\Vert_{L^2}\ .
\end{equation}
Then
\begin{align*}
\sum_{|p|\leq t^{1/8}}\Vert {\mathrm{e}}^{it\partial_{xx}}F_p\Vert_{L_x^{\infty}}&\lesssim t^{-1/2}\sup_\xi \sum_p\vert\widehat{F_p}(\xi)\vert+t^{-3/4}\sum_{|p|\leq t^{1/8}}\Vert xF_p\Vert_{L^2}\\
&\lesssim t^{-1/2}\Vert F\Vert_Z+t^{-5/8}\Vert xF\Vert_{S}\ ,
\end{align*}
while
\begin{align*}
\sum_{|p|>t^{1/8}}\Vert {\mathrm{e}}^{it\partial_{xx}}F_p\Vert_{L_x^{\infty}}&\lesssim \sum_{|p|>t^{1/8}}\Vert F_p\Vert_{H^1}\\
&= \sum_{|p|>t^{1/8}}(1+|p|^2)^{N/2}\Vert F_p\Vert_{H^1}(1+|p|^2)^{-N/2}\\&\lesssim t^{-\frac{2N-1}{16}}\Vert F\Vert_{H^{N+1}_{x,y}}\ ,
\end{align*}
therefore
\begin{equation}\label{f3}
\Vert \mathcal{N}_{0}^t[F^a,F^b,F^c]\Vert_{L_{x,y}^2}\lesssim (1+\vert t\vert)^{-1}\min_{\{\alpha,\beta,\gamma\}=\{a,b,c\}} \Vert F^\alpha\Vert_{L_{x,y}^2}\Vert F^\beta\Vert_{\widetilde{Z}_t}\Vert F^\gamma\Vert_{\widetilde{Z}_t}\ .
\end{equation}
Apply the first part of  Lemma~\ref{SS+}, we get \eqref{n01}. The proof of  \eqref{n02} follows from the second part of  Lemma~\ref{SS+}, and we only need to check $\widetilde{ Z}_t$ norm satisfies \eqref{YB}.  Due to the definition of $S^+$, we only need to prove the following inequality,
\begin{equation}\label{f4}
\Vert (1-\partial_{xx})^4F\Vert_Z+\Vert xF\Vert_Z\lesssim T^{-\delta}\Vert F\Vert_{S^+}+T^{2\delta}\Vert F\Vert_S\ .
\end{equation}
Indeed, following the proof of \eqref{GN}, we are able to show 
$$\Vert (1-\partial_{xx})^4F\Vert_Z\lesssim \Vert F\Vert_S\ ,$$
thus we only need to prove \eqref{f4} for $\Vert xF\Vert_Z$. Since $H^s(\T)\subset B^1$ with $s>1$, then
\begin{align*}
\Vert xF\Vert_Z^2&=\sup_\xi (1+|\xi|^2)^2\Vert \widehat{xF}\Vert_{B_y^1}^2\\
&\lesssim\sup_M (1+M^2)\sum_p(1+|p|^2)^s\vert \mathcal{F}_xQ_M(xF_p)\vert^2\ .
\end{align*}
We notice that for any $M,|p|\ne 0$, denote $R=(1+M^2)(1+|p|^2)^sT^{2\delta}$, and we decompose $xF_p(x)$ into two parts
$$x(1-\varphi(\frac{x}{R}))F_p\text{ and }x\varphi(\frac{x}{R})F_p\ ,$$
then
\begin{align*}
\Vert xF\Vert_Z^2&\lesssim\sup_M (1+M^2)\sum_p(1+|p|^2)^s\Vert \mathcal{F}_xQ_M\big\{x(1-\varphi(\frac{x}{R}))F_p\big\}\Vert_{L^\infty_\xi}^2:=\mathrm{I}\\
&\qquad+\sup_M (1+M^2)\sum_p(1+|p|^2)^s\Vert\mathcal{F}_xQ_M\big\{x\varphi(\frac{x}{R})F_p\big\}\Vert_{L^\infty_\xi}:=\mathrm{II}\ .
\end{align*}

\begin{align*}
\mathrm{I}&\lesssim\sup_M (1+M^2)\sum_p(1+|p|^2)^s\Vert xF_p\Vert_{L^1_x(|x|>R)}^2\\
&\lesssim \sup_M (1+M^2)\sum_p(1+|p|^2)^sR^{-1}\Vert x^2F_p\Vert_{L^2}^2\\
&\lesssim T^{-2\delta}\Vert x^2F\Vert_{L^2}\leq T^{-2\delta}\Vert F\Vert_{S^+}\ ,
\end{align*}
while
\begin{align*}
\mathrm{II}&\lesssim \sup_M (1+M^2)\sum_p(1+|p|^2)^s\Vert Q_M\big\{ x\varphi(\frac{x}{R})F_p\big\}\Vert_{L^1_x(|x|\leq R)}^2\\
&\lesssim\sup_M (1+M^2)\sum_p(1+|p|^2)^sR^2\Vert F_p\Vert_{L^2_x}\Vert xF_p\Vert_{L^2_x}\\
&\lesssim \sum_p T^{4\delta}\Vert F_p\Vert_{L^2_x}\Vert xF_p\Vert_{L^2_x}\lesssim T^{4\delta}\Vert F\Vert_{S}^2\ ,
\end{align*}
 thus we proved \eqref{f4}, the estimate on $\mathcal{R}$ is the same.

Now we turn to the proof of the error estimates \eqref{n03} and \eqref{n04}. 
We first decompose the functions as we did for estimating $\mathcal{O}_1^t$, 
\begin{equation*}
F=F_c+F_e,\ \text{ with }F_c \text{ compactly supported as }F_c=\varphi(\frac{x}{t^{1/4}})F\ ,
\end{equation*}
and reduce the problem to the estimates on $F_c,G_c,H_c$. We start with the $L^2$ estimates of
$$\mathcal{N}_0^t[F,G,H]-\mathcal{N}_0^t[F_c,G_c,H_c] \quad \text{ and }\quad \mathcal{R}[F,G,H]-\mathcal{R}[F_c,G_c,H_c] \ ,$$
without loss of generalities, it suffices to consider $\mathcal{N}_0^t[F_e,G,H] \text{ and }\frac{1}{t} \mathcal{R}[F_e,G,H]$.
Indeed, using \eqref{f3} and the definition of $F_e$,
\begin{align}
\Vert \mathcal{N}_0^t[F_e,G,H]\Vert_{L^2} +\frac{1}{t}\Vert \mathcal{R}[F_e,G,H]\Vert_{L^2}&\lesssim (1+|t|)^{-1}\Vert F_e\Vert_{L^2}\Vert G\Vert_S\Vert H\Vert_S\notag\\
&\lesssim (1+|t|)^{-5/4}\Vert F\Vert_S\Vert G\Vert_S\Vert H\Vert_S\ ,
\end{align}
while
\begin{align}\label{f5}
\Vert \mathcal{N}_0^t[F_e,G,H]\Vert_{S} +\frac{1}{t}\Vert \mathcal{R}[F_e,G,H]\Vert_{S}\lesssim (1+|t|)^{-1}\Vert F\Vert_{S}\Vert G\Vert_S\Vert H\Vert_S\ .
\end{align}
Thus by \eqref{ZSNorm}, we are able to bound
\begin{equation}
\begin{split}
&\Vert\mathcal{N}_0^t[F,G,H]-\mathcal{N}_0^t[F_c,G_c,H_c]\Vert_{Y^s}+\frac{1}{t}\Vert \mathcal{R}[F,G,H]-\mathcal{R}[F_c,G_c,H_c] \Vert_{Y^s}\\
&\qquad\lesssim (1+|t|)^{-17/16}\Vert F\Vert_{S}\Vert G\Vert_S\Vert H\Vert_S\ .
\end{split}
\end{equation}
For the $S$ norm estimate, we use \eqref{f5} again,
\begin{equation}
\begin{split}
\Vert \mathcal{N}_0^t[F_e,G,H]\Vert_{S} +\frac{1}{t}\Vert \mathcal{R}[F_e,G,H]\Vert_{S}&\lesssim (1+|t|)^{-1}\Vert F_e\Vert_{S}\Vert G\Vert_S\Vert H\Vert_S\\
&\lesssim (1+|t|)^{-5/4}\Vert F\Vert_{S^+}\Vert G\Vert_{S^+}\Vert H\Vert_{S^+}\ .
\end{split}
\end{equation}

Therefore, we only need to show the inequalities below to complete our proof of this lemma,
\begin{align}
&\Vert \mathcal{N}_0^t[F_c,G_c,H_c]-\frac{\pi}{t}\mathcal{R}[F_c,G_c,H_c]\Vert_{Y^s}\lesssim (1+\vert t\vert)^{-1-20\delta}\Vert F\Vert_S\Vert G\Vert_S\Vert H\Vert_S\ ,\label{NR1}\\
&\Vert \mathcal{N}_0^t[F_c,G_c,H_c]-\frac{\pi}{t}\mathcal{R}[F_c,G_c,H_c]\Vert_{S}\lesssim (1+\vert t\vert)^{-1-20\delta}\Vert F\Vert_{S^+}\Vert G\Vert_{S^+}\Vert H\Vert_{S^+}\label{NR2}\ .
\end{align}

For abbreviation, we assume for the rest part of proof, $F=F_c,G=G_c,H=H_c$. 
\begin{equation}
\begin{split}
&\mathcal{F}\big(\mathcal{N}_0^t[F,G,H]-\frac{\pi}{t}\mathcal{R}[F,G,H]\big)(\xi,p)\\
=&\sum_{(p,q,r,s)\in\Gamma_0}\int_{\mathbb{R}^2}e^{it2\eta\kappa}\widehat{F_q}(\xi-\eta)\overline{\widehat{G_r}}(\xi-\eta-\kappa)\widehat{H_s}(\xi-\kappa)d\kappa d\eta- \frac{\pi}{t}\widehat{F_q}(\xi)\overline{\widehat{G_r}}(\xi)\widehat{H_s}(\xi)\ .
\end{split}
\end{equation}
Rewrite the integration part,
\begin{equation*}
\begin{split}
&\int_{\mathbb{R}^2}{\mathrm{e}}^{it2\eta\kappa}\widehat{F_q}(\xi-\eta)\overline{\widehat{G_r}}(\xi-\eta-\kappa)\widehat{H_s}(\xi-\kappa)d\kappa d\eta\\
&= \int_{\mathbb{R}^3}F_q(x_1)\overline{G_r}(x_2)H_s(x_3)\int_{\mathbb{R}^2}{\mathrm{e}}^{it2\eta\kappa}{\mathrm{e}}^{-ix_1(\xi-\eta)-ix_2(\xi-\eta-\kappa)-ix_3(\xi-\kappa)}d\kappa d\eta dx_1dx_2dx_3\\
&=\frac{1}{2t}\int_{\mathbb{R}^3}F_q(x_1)\overline{G_r}(x_2)H_s(x_3){\mathrm{e}}^{-i\xi(x_1-x_2+x_3)}{\mathrm{e}}^{-i\frac{x_1-x_2}{\sqrt{2t}}\frac{x_3-x_2}{\sqrt{2t}}}\\
&\qquad\qquad\qquad\qquad\qquad\qquad\qquad\qquad\left\{\int_{\mathbb{R}^2}{\mathrm{e}}^{i\left[\eta+\frac{x_3-x_2}{\sqrt{2t}}\right]\left[\kappa+\frac{x_1-x_2}{\sqrt{2t}}\right]}d\eta d\kappa\right\} dx_1dx_2dx_3\\
&=\frac{\pi}{t}\int_{\mathbb{R}^3}F_q(x_1)\overline{G_r}(x_2)H_s(x_3){\mathrm{e}}^{-i\xi(x_1-x_2+x_3)}{\mathrm{e}}^{-i\frac{x_1-x_2}{\sqrt{2t}}\frac{x_3-x_2}{\sqrt{2t}}}dx_1dx_2dx_3\ ,
\end{split}
\end{equation*}
then
\begin{equation*}
\begin{split}
&\left\vert\int_{\mathbb{R}^2}{\mathrm{e}}^{it2\eta\kappa}\widehat{F_q}(\xi-\eta)\overline{\widehat{G_r}}(\xi-\eta-\kappa)\widehat{H_s}(\xi-\kappa)d\kappa d\eta- \frac{\pi}{t}\widehat{F_q}(\xi)\overline{\widehat{G_r}}(\xi)\widehat{H_s}(\xi)\right\vert\\
&=\frac{\pi}{|t|}\left\vert\int_{\mathbb{R}^3}F_q(x_1)\overline{G_r}(x_2)H_s(x_3){\mathrm{e}}^{-i\xi(x_1-x_2+x_3)}\Big({\mathrm{e}}^{-i\frac{x_1-x_2}{\sqrt{2t}}\frac{x_3-x_2}{\sqrt{2t}}}-1\Big)dx_1dx_2dx_3\right\vert\\
&\lesssim |t|^{-11/10}\Vert F_q\Vert_{L_x^2}\Vert G_r\Vert_{L_x^2}\Vert H_s\Vert_{L_x^2}\ .
\end{split}
\end{equation*}
Actually, using the proof above, we may obtain for any integer $m$,
\begin{equation}\label{f6}
\begin{split}
&\vert \xi\vert^m\left\vert\int_{\mathbb{R}^2}{\mathrm{e}}^{it2\eta\kappa}\widehat{F_q}(\xi-\eta)\overline{\widehat{G_r}}(\xi-\eta-\kappa)\widehat{H_s}(\xi-\kappa)d\kappa d\eta- \frac{\pi}{t}\widehat{F_q}(\xi)\overline{\widehat{G_r}}(\xi)\widehat{H_s}(\xi)\right\vert\\
&\lesssim  |t|^{-11/10}\Vert F_q\Vert_{H_x^m}\Vert G_r\Vert_{L_x^2}\Vert H_s\Vert_{L_x^2}\ .
\end{split}
\end{equation}
Due to the definition of $Y^s$ norm \eqref{DefYs} and $S$ norm \eqref{DefS}, and the fact that $H^s(\mathbb{T})$ , $s>1$ is an algebra, the proof of \eqref{NR1} follows from \eqref{f6}. For \eqref{NR2}, recall that the functions are spectrally compacted supported, then the terms
$$\Vert \mathcal{N}_0^t[F_c,G_c,H_c]-\frac{\pi}{t}\mathcal{R}[F_c,G_c,H_c]\Vert_{L^2_xH_y^N} \text{ and } \Vert x\Big(\mathcal{N}_0^t[F_c,G_c,H_c]-\frac{\pi}{t}\mathcal{R}[F_c,G_c,H_c]\Big)\Vert_{L_{x,y}^2}$$
are easy to deal with by \eqref{f6} and \eqref{sump}. We should be more careful with the terms admitting $x$ derivatives, since this $x$ derivative may fall on $\varphi(\frac{x}{t^{1/4}})$. Anyhow, since $\varphi'$ holds the similar properties as $\varphi$, \eqref{f6} still works, and we are able to get the estimate \eqref{NR2}. The proof is complete.
\end{proof}

\subsection{Proof of Proposition~\ref{Nonlinearity}}
Now, we can give the proof of Proposition~\ref{Nonlinearity}. 
\begin{proof}[ Proof of Proposition~\ref{Nonlinearity}]
We may firstly decompose the non-linearity $\mathcal{N}^t$ as the high frequency part and then the lower frequency part combined with the resonant and non resonant parts,
\begin{equation*}
\begin{split}
&\mathcal{N}^t[F,G,H]=\sum_{\substack{A,B,C\\\max(A,B,C)\geq T^{\frac{1}{6}}}}\mathcal{N}^t[Q_AF(t),Q_BG(t),Q_CH(t)]\\
&\qquad+\widetilde{\mathcal N}^t[Q_{\leq T^{\frac{1}{6}}}F(t),Q_{\leq T^{\frac{1}{6}}}G(t),Q_{\leq T^{\frac{1}{6}}}H(t)]+
\mathcal{N}_0^t[Q_{\leq T^{\frac{1}{6}}}F(t),Q_{\leq T^{\frac{1}{6}}}G(t),Q_{\leq T^{\frac{1}{6}}}H(t)]\ .
\end{split}
\end{equation*}
Then, we rewrite the last term as
\begin{align*}
&\mathcal{N}_0^t[Q_{\leq T^{\frac{1}{6}}}F(t),Q_{\leq T^{\frac{1}{6}}}G(t),Q_{\leq T^{\frac{1}{6}}}H(t)]=\frac{\pi}{t}\mathcal R[F(t),G(t),H(t)]\\
&\qquad+\Big(\mathcal{N}_0^t[Q_{\leq T^{\frac{1}{6}}}F(t),Q_{\leq T^{\frac{1}{6}}}G(t),Q_{\leq T^{\frac{1}{6}}}H(t)]-\frac{\pi}{t}\mathcal{R}[Q_{\leq T^{\frac{1}{6}}}F(t),Q_{\leq T^{\frac{1}{6}}}G(t),Q_{\leq T^{\frac{1}{6}}}H(t)]\Big)\\
&\qquad-\frac{\pi}{t}\sum_{\substack{A,B,C\\\max(A,B,C)\geq T^{\frac{1}{6}}}}\mathcal{R}[Q_AF(t),Q_BG(t),Q_CH(t)]\ .
\end{align*}

Finally, we have the formula for the remainder
\begin{align*}
&\mathcal{E}^t[F,G,H]=\\
&\qquad\sum_{\substack{A,B,C\\\max(A,B,C)\ge T^{\frac{1}{6}}}}\mathcal{N}^t[Q_AF(t),Q_BG(t),Q_CH(t)]+\widetilde{\mathcal N}^t[Q_{\leq T^{\frac{1}{6}}}F(t),Q_{\leq T^{\frac{1}{6}}}G(t),Q_{\leq T^{\frac{1}{6}}}H(t)]\\
&\qquad+\Big(\mathcal{N}_0^t[Q_{\leq T^{\frac{1}{6}}}F(t),Q_{\leq T^{\frac{1}{6}}}G(t),Q_{\leq T^{\frac{1}{6}}}H(t)]-\frac{\pi}{t}\mathcal{R}[Q_{\leq T^{\frac{1}{6}}}F(t),Q_{\leq T^{\frac{1}{6}}}G(t),Q_{\leq T^{\frac{1}{6}}}H(t)]\Big)\\
&\qquad-\frac{\pi}{t}\sum_{\substack{A,B,C\\\max(A,B,C)\geq T^{\frac{1}{6}}}}\mathcal{R}[Q_AF(t),Q_BG(t),Q_CH(t)]\ .
\end{align*}
Let us exam the terms on the right hand side one by one. The first term contributes to $\mathcal{E}_1$ by Lemma~\ref{highfreq}. The second term contains $\mathcal{E}_2$ as it can be written by lemma~\ref{fastosc} as $\widetilde{\mathcal{E}}_1+\mathcal{E}_2$ with $\widetilde{\mathcal{E}}_1$ contributing to $\mathcal{E}_1$. The last two terms contributes to $\mathcal{E}_1$ by Lemma~\ref{Res} and its remark. This finishes the proof of Proposition~\ref{Nonlinearity}.
\end{proof}

\section{The Resonant System}
In this section, we will study the following resonant system
\begin{equation}\label{resonantsyst}
i\partial_t G=\mathcal{R}[G,G,G]\ .
\end{equation}

Before further discussions, let us recall a useful result on the structure of the resonances at first. 
\begin{lemma}\label{resset}\cite[Lemma 1]{GGHW}
Given ${(p_1,p_2,p_3,p_4)\in \Gamma_0}$, namely, 
$${p_1-p_2+p_3-p_4=0\  {\rm and}\ \vert p_1\vert -\vert p_2\vert+\vert p_3\vert-\vert p_4\vert=0}$$
if and only if at least one of the following properties holds :
\begin{enumerate}
\item ${\forall j,\  p_j\geq 0\ ;}$
\item ${\forall j,\  p_j\leq 0\ ;}$
\item ${p_1=p_2\ ,\ p_3=p_4\ ;}$
\item ${p_1=p_4\ ,\ p_3=p_2\ .}$
\end{enumerate}
\end{lemma}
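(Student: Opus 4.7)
The plan is to decouple the two defining identities of $\Gamma_0$ by splitting each $p_j$ into its positive and negative parts, reducing the problem to a transparent case analysis.

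The \emph{if} direction is immediate: in cases (1)--(2) the second identity is just $\pm 1$ times the first, and in cases (3)--(4) both identities hold termwise because the two pairs cancel. For the converse, I will write $p_j = p_j^+ - p_j^-$ with $p_j^\pm \ge 0$ and $p_j^+ p_j^- = 0$, so that $|p_j| = p_j^+ + p_j^-$. Summing and differencing the momentum identity $p_1-p_2+p_3-p_4=0$ with the resonance identity $|p_1|-|p_2|+|p_3|-|p_4|=0$ then produces the decoupled balances
\begin{equation*}
p_1^+ + p_3^+ = p_2^+ + p_4^+, \qquad p_1^- + p_3^- = p_2^- + p_4^-.
\end{equation*}
If all $p_j$ share one weak sign we land immediately in case (1) or (2).

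Otherwise some $p_j$ is strictly positive and some strictly negative. At this point I will invoke the symmetries of $\Gamma_0$---the swaps $1 \leftrightarrow 3$ and $2 \leftrightarrow 4$, the global negation $p_j \mapsto -p_j$, and the reflection $(p_1,p_2,p_3,p_4)\mapsto(p_2,p_1,p_4,p_3)$ (which merely negates both identities)---to normalize to $p_1 > 0$. Then $p_1^- = 0$, so the second balance reads $p_3^- = p_2^- + p_4^-$; the mixed-sign hypothesis rules out $p_3^- = 0$, so $p_3 < 0$ and $p_3^+ = 0$, leaving $p_1^+ = p_2^+ + p_4^+$. It remains to resolve the signs of $p_2,p_4$: both nonnegative contradicts $p_3^- > 0$, and both nonpositive contradicts $p_1^+ > 0$, so exactly one of them is strictly negative. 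In the branch $p_2 \ge 0 > p_4$ the two balances collapse to $p_1 = p_2$ and $p_3 = p_4$, which is case (3); in the branch $p_4 \ge 0 > p_2$ they collapse to $p_1 = p_4$ and $p_3 = p_2$, which is case (4).

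The only mildly delicate point I anticipate is confirming that $\Gamma_0$ admits enough symmetries to normalize to $p_1 > 0$ (rather than merely to ``some $p_j > 0$''); once that reduction is in place, the constraint $p_j^+ p_j^- = 0$ forces the collapse of each balance to a single nontrivial equality, and the enumeration is automatic.
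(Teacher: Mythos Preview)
Your proof is correct. The paper itself does not prove this lemma---it is quoted verbatim from \cite{GGHW} without argument---so there is no in-paper proof to compare against. Your positive/negative-part decomposition, yielding the decoupled balances $p_1^+ + p_3^+ = p_2^+ + p_4^+$ and $p_1^- + p_3^- = p_2^- + p_4^-$, is clean and makes the case analysis transparent.

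Two minor remarks for polish. First, after you normalize via the symmetries of $\Gamma_0$, you should record (it is a one-line check) that each of those symmetries permutes the four alternatives (1)--(4) among themselves, so that landing in one of the cases after normalization implies the original tuple also lies in one of them. Second, in the final branching you write ``$p_2 \ge 0 > p_4$'' and ``$p_4 \ge 0 > p_2$'', but in fact the two contradictions you derived (both $\ge 0$ forces $p_3^- = 0$; both $\le 0$ forces $p_1^+ = 0$) already exclude $p_2 = 0$ and $p_4 = 0$, so the inequalities are strict; this is harmless but worth noting so the reader does not pause.
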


The following proposition shows us that we are able to get rid of the resonances corresponding to cases  $(3)$ and $(4)$, and deduce our resonant system to a decoupling system, which only contains cubic Szeg\H{o} equations.
\begin{proposition}\label{decoupling}
Given $G_0\in L^2_xH^s_y$, $s>1$,  $\Vert G_0\Vert_{L^2_xH^s_y}=\varepsilon$, $\varepsilon>0$ and $G_0(x,y)=-G_0(x,y+\pi)$. Set $G^1(t)= {\mathrm{e}}^{2it\Vert G_0\Vert_{L^2}^2}G(t)$ with $G$ as the corresponding solution to the resonant system \eqref{resonantsyst}, then $G^1(t)$ satisfies the following cubic Szeg\H{o} equation,
\begin{equation}\label{Rss+}
i\partial_t G^1_\pm=\mathcal{R}_\pm[G^1_\pm,G^1_\pm,G^1_\pm]\ ,
\end{equation}
where
\begin{equation}
\mathcal{F}_x\mathcal{R}_{\pm}[G^1_\pm,G^1_\pm,G^1_\pm](\xi,y)=\Pi_\pm(\vert \widehat{G^1}_\pm\vert^2\widehat{G^1}_\pm)(\xi,y)\ ,
\end{equation}
with $G^1_+=\Pi_+(G^1):=\sum\limits_{p>0}G^1_p(x){\mathrm e}^{ipy}$ and $G^1_-=\Pi_-(G^1):=\sum\limits_{p<0}G^1_p(x){\mathrm e}^{ipy}$.
\end{proposition}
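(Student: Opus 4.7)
The plan proceeds in three steps: expand the resonant nonlinearity using Lemma~\ref{resset}, isolate the mass-type corrections responsible for any coupling between $\Pi_+G$ and $\Pi_-G$, and then absorb them by the scalar gauge $e^{2it\|G_0\|_{L^2}^2}$.

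\textbf{Step 1 (setup and conservation).} The operator $\mathcal R$ is diagonal in the $x$-Fourier variable $\xi$: for each fixed $\xi$, the system $i\partial_t\widehat G_p(\xi)=\sum_{(p,q,r,s)\in\Gamma_0}\widehat G_q(\xi)\overline{\widehat G_r(\xi)}\widehat G_s(\xi)$ is a Hamiltonian ODE on $\ell^2(\Z)$, and different values of $\xi$ do not interact. A standard computation using the symmetry $(p,q,r,s)\leftrightarrow(q,p,s,r)$ of $\Gamma_0$ shows that $\|\widehat G(\xi,\cdot)\|_{L^2_y}$ is conserved for every $\xi$; integrating in $\xi$ gives $\|G(t)\|_{L^2_{x,y}}=\|G_0\|_{L^2}$. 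The antisymmetry hypothesis $G_0(x,y+\pi)=-G_0(x,y)$ amounts to $\widehat G_p\equiv 0$ for all even $p$, and since $p-q+r-s=0$ preserves parity this property is propagated by the flow. Hence $p=0$ never contributes, $\Pi_\pm$ is simply projection onto $p>0$ (resp.\ $p<0$), and $G=G_++G_-$ is an orthogonal splitting.

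\textbf{Step 2 (resonance decomposition).} Fix an output mode $p>0$ (the case $p<0$ is symmetric). By Lemma~\ref{resset} the set of $(q,r,s)$ with $(p,q,r,s)\in\Gamma_0$ is covered by three overlapping families: $R_1$, triples with all entries nonnegative (case (1)); $R_3$, triples of the form $(p,r,r)$ with $r\in\Z$ (case (3)); and $R_4$, triples of the form $(r,r,p)$ (case (4)); case (2) is empty since $p>0$. The contribution of $R_1$ is exactly $\mathcal F\mathcal R_+[G_+,G_+,G_+]_p(\xi)$, while the sums over $R_3$ and over $R_4$ each equal the pure mass term $\widehat G_p(\xi)\sum_r|\widehat G_r(\xi)|^2$. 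Performing inclusion–exclusion to remove the overlaps $R_1\cap R_3$, $R_1\cap R_4$, $R_3\cap R_4$ and add back $R_1\cap R_3\cap R_4$ yields an identity of the schematic form $\mathcal F\mathcal R[G,G,G]_p = \mathcal F\mathcal R_+[G_+,G_+,G_+]_p + 2c\,\widehat G_p$, where $c$ is a quantity conserved by Step~1 and identified with $\|G_0\|_{L^2}^2$.

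\textbf{Step 3 (gauge transformation).} The trilinear form $(F,G,H)\mapsto F\overline GH$ is invariant under the common scalar phase $(F,G,H)\mapsto(e^{i\alpha}F,e^{i\alpha}G,e^{i\alpha}H)$, and therefore so is $\mathcal R$. Setting $G^1:=e^{2it\|G_0\|_{L^2}^2}G$, the chain rule combined with this invariance gives $i\partial_tG^1 = \mathcal R[G^1,G^1,G^1]-2\|G_0\|_{L^2}^2\,G^1$. Substituting the decomposition of Step~2 (and its analogue for $p<0$), the correction $2\|G_0\|_{L^2}^2\,G^1$ cancels the mass contribution exactly, and projection by $\Pi_\pm$ produces the claimed decoupled Szeg\H o equations $i\partial_tG^1_\pm=\mathcal R_\pm[G^1_\pm,G^1_\pm,G^1_\pm]$.

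The delicate point is the inclusion–exclusion bookkeeping in Step~2: one must verify carefully that, after collecting all the "trivial" contributions from cases (3) and (4) of Lemma~\ref{resset} and subtracting the parts already counted in the Szeg\H o sum of case (1), the net correction is indeed a scalar multiple of $\widehat G_p$ whose coefficient matches the conserved mass from Step~1. The odd-mode hypothesis streamlines this combinatorics by removing any ambiguity at $p=0$ and ensuring that the final splitting along $\Pi_+$ and $\Pi_-$ is genuinely orthogonal, so that the two resulting Szeg\H o flows decouple without residual cross-terms.
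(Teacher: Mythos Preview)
Your approach mirrors the paper's brief argument: both invoke Lemma~\ref{resset} to split the resonant set and then absorb the ``trivial'' contributions from cases (3) and (4) via a gauge rotation. The paper's proof is a two-sentence sketch; you supply the inclusion--exclusion bookkeeping explicitly, which is helpful.

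There is, however, a genuine error in your Step~2. Carrying out the inclusion--exclusion for a fixed $p>0$ actually yields
\[
\sum_{(p,q,r,s)\in\Gamma_0}\widehat G_q\,\overline{\widehat G_r}\,\widehat G_s
\;=\;\mathcal F\mathcal R_+[G_+,G_+,G_+]_p\;+\;2\,\widehat G_p\sum_{r<0}|\widehat G_r(\xi)|^2,
\]
since the overlaps $R_1\cap R_3$ and $R_1\cap R_4$ remove exactly the $r>0$ part of the mass sums, while the $R_3\cap R_4$ and triple-intersection terms cancel. Thus the correction coefficient is $c(\xi)=\|\widehat G_-(\xi,\cdot)\|_{L^2_y}^2$, which depends on $\xi$ and is \emph{not} the global quantity $\|G_0\|_{L^2_{x,y}}^2$. (For $p<0$ one gets $\|\widehat G_+(\xi,\cdot)\|_{L^2_y}^2$ by symmetry, a different coefficient.) Each of these is indeed conserved for every $\xi$, as your Step~1 observes, but the single scalar phase $e^{2it\|G_0\|_{L^2}^2}$ of Step~3 cannot cancel a $\xi$-dependent term, nor simultaneously match the distinct coefficients on the $+$ and $-$ sides.

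This is in fact an imprecision in the proposition as stated in the paper, which the paper's own terse proof glosses over in the same way. The correct decoupling gauge is $\xi$-dependent and separate for $\Pi_+G$ and $\Pi_-G$; equivalently, one may simply note that the extra term is a real multiple of $\widehat G_p$ and hence does not affect $|\widehat G_p(\xi)|$, which is all that matters for the downstream Sobolev-growth application. Your proof would be repaired by making either of these observations explicit in place of the incorrect identification $c=\|G_0\|_{L^2}^2$.
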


\begin{proof}
The proof of the proposition above is easy. First, by the transformation, 
$$G^1(t)= {\mathrm{e}}^{2it\Vert G_0\Vert_{L^2}^2}G(t)\ ,$$
and using the fact that the $L^2$ norm is conserved, we get our first reduction to the resonant system corresponding to cases  $(1)$ and $(2)$. And thanks to our initial condition $G_0(x,y+\pi)=-G_0(x,y)$, we have
 $$\mathcal{F}_yG_0(x,p)=0,\quad p \text{ even numbers}\ ,$$ 
which insures the decoupling.
\end{proof}

\subsection{The cubic Szeg\H{o} equation}\mbox{}\\
\indent Let us begin with a simpler model, a resonant system for a vector $a=\{a_p\}_{p>0}$,
\begin{equation}\label{Rssa}
i\partial_t a_p(t)=\sum_{(p,q,r,s)\in\Gamma_{0,+}}a_q(t)\overline{a_r(t)}a_s(t):=R_+[a(t),a(t),a(t)]_p\ ,
\end{equation}
where $\Gamma_{0,+}:=\{(p_1,p_2,p_3,p_4):\ p_1-p_2+p_3-p_4=0, p_j>0\ \forall j\}$. If we denote $v(t,y):=\sum\limits_{p>0}a_p(t)e^{ipy}$, then $v$ satisfies the following cubic Szeg\H{o} equation
\begin{equation}\label{Szego}
i\partial_t v=\Pi_+(\vert v\vert^2v)\ .
\end{equation}

Let us recall more for the cubic Szeg\H{o} equation \eqref{Szego}, especially the Lax pair structure and its conserved quantities. G\'erard and Grellier have showed that the cubic Szeg\H{o} equation is a completely integrable system with two Lax pairs. One may refer to \cite{GGASENS,GGINV} for more details. To define the Lax pairs, one may need to introduce the Hankel operator $H_v$ and the Toeplitz operator $T_b$ with $v\in H^{\frac12}_+(\T)$, $b\in L^\infty(\T)$,
\begin{equation}\label{HanToep}
H_v h:=\Pi_+(v\bar{h})\ ,T_b h:=\Pi_+(bh)\ , h\in L^\infty\ .
\end{equation}
We remark that $H_v$ is $\mathbb{C}-$antilinear, and is a Hilbert-Schmidt operator. 
Now we are able to introduce the Lax pair structure of the cubic Szeg\H{o} equation \eqref{Szego},
\begin{theorem}\cite[Theorem 3.1]{GGASENS}\label{szlax}
Let $v\in C(\R,H^s_+(\T))$ for some $s>\frac12$. The cubic Szeg\H{o} equation \eqref{Szego}
has a Lax pair $(H_v, B_v)$, namely, if $v$ solves \eqref{Szego}, then
\begin{equation}
\frac{d H_v}{dt}=[B_v,H_v]\ , 
\end{equation}
where $B_v=\frac{i}{2}H_v^2-i T_{\vert v\vert ^2}$, 
\end{theorem}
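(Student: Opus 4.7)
The plan is to compute both sides of the Lax identity $dH_v/dt = [B_v, H_v]$ and show they coincide. Since the map $v \mapsto H_v$ is $\mathbb{C}$-linear (even though $H_v$ itself is $\mathbb{C}$-antilinear in its argument), differentiating under the defining formula gives $\tfrac{d}{dt}H_{v(t)} = H_{\partial_t v}$, and substituting the Szeg\H{o} equation $\partial_t v = -i\Pi_+(|v|^2 v)$ yields
\[
\frac{dH_v}{dt} = -i\,H_{\Pi_+(|v|^2 v)}.
\]

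Next, I would expand $[B_v, H_v]$, where the antilinearity of $H_v$ is the key subtlety: for any scalar $\lambda$ and any $\mathbb{C}$-linear operator $A$, one has $H_v\circ(\lambda A) = \bar\lambda\, H_v A$, so $[\lambda A, H_v] = \lambda A H_v - \bar\lambda H_v A$. Applied to $B_v = \tfrac{i}{2}H_v^2 - iT_{|v|^2}$, and noting that $H_v^2 H_v = H_v H_v^2 = H_v^3$, this gives
\[
[B_v, H_v] = i\,H_v^3 \;-\; i\bigl(T_{|v|^2}H_v + H_v T_{|v|^2}\bigr),
\]
so the Lax identity reduces to the purely algebraic operator identity
\[
H_{\Pi_+(|v|^2 v)} \;=\; T_{|v|^2} H_v + H_v T_{|v|^2} - H_v^3 \qquad \text{on } L^2_+(\T).
\]

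To verify this, I would evaluate both sides on $h\in L^2_+$. Using the elementary fact $\Pi_+(\Pi_-(f)\bar h)=0$ whenever $\bar h$ is supported in nonpositive modes, the left-hand side collapses to $\Pi_+(|v|^2 v\,\bar h) = H_v(|v|^2 h)$. Decomposing $|v|^2 h = \Pi_+(|v|^2 h) + \Pi_-(|v|^2 h)$ and using the additivity of the antilinear operator $H_v$, the first piece contributes $H_v T_{|v|^2} h$; the second, $H_v(\Pi_-(|v|^2 h))$, is unpacked via the identity $\overline{\Pi_- g} = \Pi_+\bar g - \overline{\hat g_0}$ and, together with a parallel expansion of $H_v^2 h$, reproduces exactly $T_{|v|^2} H_v h - H_v^3 h$ by a direct matching of Fourier coefficients.

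The main obstacle is the careful bookkeeping of signs and the zero mode of $\Pi_+$: complex conjugation swaps $\Pi_+$ and $\Pi_-$ only up to a constant-mode correction, and it is precisely this correction that is responsible for the appearance of the $H_v^3$ term in the Lax pair. Once the antilinearity of $H_v$ and this zero-mode discrepancy are tracked correctly, the verification reduces to routine Fourier-mode algebra.
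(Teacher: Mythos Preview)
The paper does not supply its own proof of this theorem: it is quoted verbatim from G\'erard--Grellier \cite{GGASENS} and used as a black box (only its consequence, the conservation of ${\rm Tr}\,|H_v|$, is invoked in Proposition~\ref{Gnorm}). So there is no ``paper's own proof'' to compare against.

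That said, your approach is exactly the standard one from \cite{GGASENS}. You correctly handle the antilinearity of $H_v$ when computing the commutator (this is where the coefficient $\tfrac{i}{2}$ in $B_v$ becomes $i$ after commutation), and you correctly reduce the Lax identity to the purely algebraic operator identity
\[
H_{\Pi_+(|v|^2 v)} = T_{|v|^2}H_v + H_vT_{|v|^2} - H_v^3.
\]
Your verification sketch---reducing $H_{\Pi_+(|v|^2 v)}h$ to $H_v(|v|^2 h)$ via the vanishing of $\Pi_+(\Pi_-(f)\bar h)$, then splitting $|v|^2 h$ into its $\Pi_\pm$ parts---is the right route and matches the original argument. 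The last paragraph is slightly hand-wavy: the cleanest way to finish is not ``direct matching of Fourier coefficients'' but rather the operator identity $H_v(\Pi_- g) = \Pi_+(v\,\overline{\Pi_- g}) = \Pi_+\bigl(v\,\Pi_+(\bar g)\bigr) - (g|1)\,\Pi_+ v$, which makes the cancellation with $T_{|v|^2}H_v h - H_v^3 h$ transparent without mode-by-mode bookkeeping. But this is a matter of presentation, not a gap.
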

A direct consequence of this Lax pair structure is that the spectrum of the trace class operator $H_v^2$, is conserved by the evolution, in particular, the trace norm of $H_v^2$ is conserved by the flow. A theorem by Peller \cite[Theorem2, P454]{Peller}, says that the trace norm of a Hankel operator $H_v$ is equivalent to the Besov norm $B^1_{1,1}(\T)$ of $v$. One may also see \cite{GGHW}.

\subsection{Estimation of solutions to the resonant system}\mbox{}\\
\indent We are now able to state a result concerning the long time behavior and stability of the asymptotic system \eqref{resonantsyst}.
\begin{lemma}\label{estofR}
For every function $G^1,\ G^2,\ G^3$, the following estimates hold true
\begin{align}
\Vert\mathcal{R}[G^1,G^2,G^3]\Vert_{L^2_{x,y}}&\lesssim\min_{\{j,k,\ell\}=\{1,2,3\}}\Vert G^j\Vert_{L^2_{x,y}}\Vert G^k\Vert_Z\Vert G^\ell\Vert_Z\ ,\label{R1}\\
\Vert\mathcal{R}[G^1,G^2,G^3]\Vert_Z&\lesssim\Vert G^1\Vert_Z\Vert G^2\Vert_Z\Vert G^3\Vert_Z\ ,\label{R2}\\
\Vert\mathcal{R}[G^1,G^2,G^3]\Vert_{S}&\lesssim\max_{\{j,k,\ell\}=\{1,2,3\}}\Vert G^j\Vert_S\Vert G^k\Vert_Z\Vert G^\ell\Vert_Z\ .\label{R3}
\end{align}
\end{lemma}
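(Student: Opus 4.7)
All three inequalities share the same local-in-$\xi$ structure: at each fixed $\xi\in\R$, $\mathcal{R}$ acts as a trilinear form in the $y$-variable on the slices $\widehat{G^j}(\xi,\cdot)$. The plan is to prove a sharp pointwise-in-$\xi$ trilinear bound in $y$, then integrate or take a supremum in $\xi$, and finally invoke Lemma~\ref{SS+} from the appendix to promote the $L^2$ bound to the $S$ bound. As the first step, Lemma~\ref{resset} lets us decompose $\Gamma_0$ (up to lower-dimensional overlaps, handled by inclusion--exclusion) as the union of the two \emph{Szeg\H{o} families} (all four indices of the same sign) and the two \emph{Toeplitz families} $\{p_1=p_2,\,p_3=p_4\}$ and $\{p_1=p_4,\,p_3=p_2\}$. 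Correspondingly, at fixed $\xi$,
\begin{equation*}
\mathcal{F}_y\mathcal{R}[F,G,H](\xi,\cdot)=\Pi_+\!\big(\widehat F_+\overline{\widehat G_+}\widehat H_+\big)+\Pi_-\!\big(\widehat F_-\overline{\widehat G_-}\widehat H_-\big)+\big(\widehat G,\widehat H\big)_{L^2_y}\widehat F+\big(\widehat F,\widehat G\big)_{L^2_y}\widehat H,
\end{equation*}
where $\Pi_\pm$ and the inner products act in the $y$ variable only.

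For \eqref{R1}, I fix $\xi$ and estimate each piece in $L^2_y$: H\"older combined with the $L^2_y$-boundedness of $\Pi_\pm$ handles the Szeg\H{o} pieces, while Cauchy--Schwarz bounds the scalar factors $|(\widehat G,\widehat H)_{L^2_y}|$ appearing in the Toeplitz pieces. Using the embeddings $B^1(\T)\hookrightarrow L^\infty(\T)$ and $B^1(\T)\hookrightarrow L^2(\T)$ (both already established in the proof of Lemma~2.2), this gives a pointwise-in-$\xi$ bound
\begin{equation*}
\|\mathcal{F}_y\mathcal{R}[F,G,H](\xi,\cdot)\|_{L^2_y}\lesssim \|\widehat F(\xi,\cdot)\|_{L^2_y}\,\|\widehat G(\xi,\cdot)\|_{B^1}\,\|\widehat H(\xi,\cdot)\|_{B^1}.
\end{equation*}
Squaring, integrating in $\xi$, using Plancherel on the $F$ factor together with $\|\widehat G(\xi,\cdot)\|_{B^1}\leq\|G\|_Z$ (and similarly for $H$), yields \eqref{R1} with the $L^2$ slot on $F$; permuting the slot of the $L^2$ factor produces the minimum.

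For \eqref{R2}, I use that $B^1_{1,1}(\T)$ is a Banach algebra (classical paraproduct fact at the borderline index $s=d/p$) and that the Szeg\H{o} projectors $\Pi_\pm$ are bounded on $B^1_{1,1}(\T)$, conveniently obtained from Peller's identification of $B^1_{1,1}(\T)$ with the trace class of Hankel operators \cite{Peller}. Running the same decomposition then gives
\begin{equation*}
\|\mathcal{F}_y\mathcal{R}[F,G,H](\xi,\cdot)\|_{B^1}\lesssim \|\widehat F(\xi,\cdot)\|_{B^1}\|\widehat G(\xi,\cdot)\|_{B^1}\|\widehat H(\xi,\cdot)\|_{B^1},
\end{equation*}
with the Toeplitz contribution handled by combining $B^1\hookrightarrow L^2$ on the scalar factor and the $B^1$ algebra norm on the remaining one. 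Multiplying by $(1+|\xi|^2)$ and absorbing that weight into a single slot (say $F$) gives $(1+|\xi|^2)\|\mathcal{F}_y\mathcal{R}\|_{B^1}\leq\|F\|_Z\|G\|_Z\|H\|_Z$, and taking $\sup_\xi$ delivers \eqref{R2}.

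Finally, \eqref{R3} is an automatic consequence of \eqref{R1}: Lemma~\ref{SS+} from the appendix is precisely designed to transfer an $L^2$-level trilinear bound of the form $\min\|G^j\|_{L^2}\|G^k\|_Y\|G^\ell\|_Y$ into an $S$-norm bound of the form $\max\|G^j\|_S\|G^k\|_Y\|G^\ell\|_Y$ for a suitable auxiliary norm $Y$, applied here with $Y=Z$. The main obstacle in the whole argument is really Step~3: establishing the borderline algebra property of $B^1_{1,1}(\T)$ together with the boundedness of $\Pi_\pm$ on it, both of which I reduce to classical Besov-space and Hankel-operator results rather than reproving by hand.
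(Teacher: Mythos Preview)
Your proof is correct and follows essentially the same route as the paper: a pointwise-in-$\xi$ trilinear bound in $y$ for \eqref{R1}, the algebra property of $B^1_{1,1}(\T)$ for \eqref{R2}, and the transfer lemma (Lemma~\ref{SS+}) for \eqref{R3}. The only difference is cosmetic---the paper stays on the Fourier side and invokes the Young-type bound \eqref{sump} directly, whereas you unpack the physical-space decomposition via Lemma~\ref{resset} and use H\"older with $B^1\hookrightarrow L^\infty$; both arguments are equivalent.
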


\begin{proof}
The first inequality comes from \eqref{sump}. Indeed, by the definition of $\mathcal{R}$,
\begin{align*}
\Vert\mathcal{R}[G^1,G^2,G^3]\Vert_{L^2_{x,y}}&=\Big\Vert \sum_{\Gamma_{0,+}\cup\Gamma_{0,-}} \widehat{G}^1_q\overline{\widehat{G}^2_r}\widehat{G}^3_s\Big\Vert_{L^2_\xi\ell_p^2}\\
&\lesssim\min_{\{j,k,\ell\}=\{1,2,3\}}\Vert G^j\Vert_{L^2_{x,y}}\Vert \widehat{G}^k\Vert_{L^\infty_\xi B^1_y}\Vert \widehat{G}^\ell\Vert_{L^\infty_\xi B^1_y}\\
&\lesssim \min_{\{j,k,\ell\}=\{1,2,3\}}\Vert G^j\Vert_{L^2_{x,y}}\Vert G^k\Vert_{Z}\Vert G^\ell\Vert_Z\ ,
\end{align*}
Apply Lemma~\ref{SS+}, we get the third inequality. The second inequality comes from the fact that $B^1$ is an algebra.
\end{proof}

\begin{proposition}\label{Gnorm}
Assume $G_0\in S^{(+)}$, $\Vert G_0\Vert_{S^{(+)}}=\varepsilon$ with $\varepsilon$ small enough, and $G$ evolves according to \eqref{resonantsyst}.
Then there holds that for $t\ge1$,
\begin{align}
&\Vert G(\pi\ln t)\Vert_Z\simeq\Vert G_0\Vert_Z\ ,\label{BddG1}\\
&\Vert G(\pi\ln t)\Vert_{S}\lesssim(1+\vert t\vert)^{\delta'}\Vert G_0\Vert_{S}\ ,\label{BddG2}\\
&\Vert G(\pi\ln t)\Vert_{S^{+}}\lesssim(1+\vert t\vert)^{\delta''}\Vert G_0\Vert_{S^{+}}\ ,\label{BddG3}
\end{align}
with $\delta'\simeq\Vert G_0\Vert_Z^2$, $\delta''\simeq\Vert G_0\Vert_S^3\Vert G_0\Vert_Z^{-1}$.
\end{proposition}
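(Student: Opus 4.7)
The plan is to reduce the resonant dynamics to a parametric family of cubic Szeg\H{o} equations and then exploit the complete integrability of that equation for the $Z$-estimate and Gronwall for the $S$- and $S^+$-estimates. First I would apply Proposition~\ref{decoupling}: the gauge change $G^1(\tau)=\mathrm{e}^{2i\tau\|G_0\|_{L^2}^2}G(\tau)$ has modulus one so preserves the $Z$, $S$, and $S^+$ norms; moreover $G^1_{\pm}=\Pi_{\pm}G^1$ solves the decoupled Szeg\H{o} system \eqref{Rss+} pointwise in $\xi$ after partial Fourier transform in $x$. It is therefore enough to prove the three estimates for $G^1$.

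For \eqref{BddG1}, for each fixed $\xi\in\mathbb{R}$ the function $\tau\mapsto\widehat{G^1_{\pm}}(\tau,\xi,\cdot)\in L^2_{\pm}(\mathbb{T})$ solves the cubic Szeg\H{o} equation. By Theorem~\ref{szlax}, the Hankel operator $H_{\widehat{G^1_{\pm}}(\tau,\xi,\cdot)}$ evolves by a commutator, so its trace norm is a constant of motion. Peller's theorem then identifies this trace norm with $\|\widehat{G^1_{\pm}}(\tau,\xi,\cdot)\|_{B^1_{1,1}(\mathbb{T})}$ up to universal constants; combining the $+$ and $-$ parts, multiplying by the weight $(1+|\xi|^2)$, and taking the supremum in $\xi$ yields $\|G(\tau)\|_Z\simeq\|G_0\|_Z$, with the $\simeq$ reflecting the Peller equivalence.

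For \eqref{BddG2}, I would apply \eqref{R3} from Lemma~\ref{estofR} to the equation $i\partial_\tau G=\mathcal{R}[G,G,G]$, obtaining $\partial_\tau\|G(\tau)\|_S\lesssim \|G(\tau)\|_S\,\|G(\tau)\|_Z^2$. Since $\|G(\tau)\|_Z$ is essentially constant by the previous step, Gronwall's inequality gives $\|G(\tau)\|_S\lesssim \|G_0\|_S\,\mathrm{e}^{C\|G_0\|_Z^2\tau}$, and the substitution $\tau=\pi\ln t$ converts this to the polynomial bound $\|G(\pi\ln t)\|_S\lesssim (1+|t|)^{C\pi\|G_0\|_Z^2}\|G_0\|_S$, delivering \eqref{BddG2} with $\delta'\simeq\|G_0\|_Z^2$.

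For \eqref{BddG3}, the additional ingredient is an $S^+$-analogue of \eqref{R3}. Because $\mathcal{R}$ acts pointwise in $\xi$ by \eqref{DefR}, the multipliers $(1-\partial_{xx})^4\leftrightarrow(1+\xi^2)^4$ and $x\leftrightarrow i\partial_\xi$ distribute across the trilinear form in Leibniz fashion; combined with the algebra property of $B^1_{1,1}(\mathbb{T})$, this yields an estimate of the schematic form $\|\mathcal{R}[G,G,G]\|_{S^+}\lesssim \|G\|_{S^+}\|G\|_Z^2+\|G\|_Z\|G\|_S^2$. Gronwall on this inhomogeneous inequality, with the $S$-growth from the previous paragraph plugged into the second term, produces an exponential bound which after the substitution $\tau=\pi\ln t$ is majorized by $(1+|t|)^{\delta''}\|G_0\|_{S^+}$ with $\delta''\simeq\|G_0\|_S^3\|G_0\|_Z^{-1}$, the latter form being reached by using $\|G_0\|_Z\le\|G_0\|_S$ to absorb the quadratic-in-$\|G_0\|_Z$ exponent. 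The main obstacle will be verifying the Peller equivalence with constants uniform in the $\xi$-parameter, together with a careful Leibniz decomposition of the weight $x\leftrightarrow i\partial_\xi$ through the resonant trilinear form so that no factor of $\|G\|_{S^+}$ is lost when the $\partial_\xi$ lands on one of the three inputs.
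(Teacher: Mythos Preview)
Your treatment of \eqref{BddG1} and \eqref{BddG2} is essentially the paper's: reduction to the Szeg\H{o} equation via Proposition~\ref{decoupling}, Peller's theorem for the $Z$-norm conservation, and \eqref{R3} plus Gronwall for the $S$-growth. The only cosmetic difference is that the paper first changes variable to $\widetilde G(t)=G(\pi\ln t)$ and runs Gronwall in $t$, whereas you work in $\tau$ and substitute at the end.

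The gap is in \eqref{BddG3}. Your claimed trilinear bound
\[
\Vert\mathcal{R}[G,G,G]\Vert_{S^+}\lesssim\Vert G\Vert_{S^+}\Vert G\Vert_Z^2+\Vert G\Vert_Z\Vert G\Vert_S^2
\]
does not follow from the Leibniz argument you sketch. When the weight $x\leftrightarrow i\partial_\xi$ lands on one input, the estimate \eqref{R3} applied to $\mathcal{R}[xG,G,G]$ produces a term $\Vert G\Vert_S\,\Vert xG\Vert_Z\,\Vert G\Vert_Z$, and one only has $\Vert xG\Vert_Z\lesssim\Vert G\Vert_{S^+}$ (not $\lesssim\Vert G\Vert_S$). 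This forces an extra term of size $\Vert G\Vert_{S^+}\Vert G\Vert_S\Vert G\Vert_Z$ in the $S^+$ bound. Feeding that into Gronwall gives a coefficient $e^{C\delta'\tau}\Vert G_0\Vert_S\Vert G_0\Vert_Z$ in front of $\Vert G\Vert_{S^+}$, and integrating yields a bound that is \emph{doubly} exponential in $\tau$, hence not polynomial after $\tau=\pi\ln t$. Even if your cleaner bound were true, the inhomogeneous Gronwall would leave you with a prefactor $\Vert G_0\Vert_S^2\Vert G_0\Vert_Z^{-1}$ on the second term, which is not $\lesssim\Vert G_0\Vert_{S^+}$ in general since $Z$ is strictly weaker than $S$.

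The paper avoids this by sharpening the splitting estimate \eqref{f4}: instead of choosing the cutoff parameter $R$ purely in terms of $t$, one tunes it to the initial data, obtaining
\[
\Vert(1-\partial_{xx})^4G\Vert_Z+\Vert xG\Vert_Z\lesssim t^{-\delta'}\Vert G_0\Vert_S^{-1}\Vert G_0\Vert_Z\,\Vert G\Vert_{S^+}+t^{2\delta''}\Vert G_0\Vert_{S^+}\Vert G_0\Vert_S\Vert G_0\Vert_Z^{-2}\,\Vert G\Vert_S,
\]
with $\delta''\simeq\Vert G_0\Vert_S^3\Vert G_0\Vert_Z^{-1}$. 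Inserting these $\theta_1,\theta_2$ into \eqref{SSS+} keeps the homogeneous coefficient at order $\Vert G_0\Vert_Z^2$ (after plugging in the $S$-growth) and, crucially, places a factor $\Vert G_0\Vert_{S^+}$ in the inhomogeneous forcing. Inhomogeneous Gronwall then closes at $t^{O(\delta'')}\Vert G_0\Vert_{S^+}$. You should either reproduce this data-dependent refinement or find another mechanism that prevents the cross term $\Vert G\Vert_{S^+}\Vert G\Vert_S\Vert G\Vert_Z$ from appearing.
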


\begin{proof}
For the first conservation, we use the complete integrability of the cubic Szeg\H{o} equation, especially its Lax pair and the conservation of the $B^1$ norm, which is stated in the previous subsection. First, one may use Proposition~\ref{decoupling} to reduce our problem to the cubic Szeg\H{o} equation, and the transformation we used keeps the $Z$ norm. Then we use Peller's theorem to obtain
\begin{align*}
\Vert \widehat{G}(\xi, t)\Vert_{B^1}\simeq {\mathrm Tr}\vert H_{\widehat{G}(\xi, t)}\vert\ .
\end{align*}
Combined with the Lax Pair structure, we have
\begin{align*}
\Vert \widehat{G}(\xi, t)\Vert_{B^1}\simeq {\mathrm Tr}\vert H_{\widehat{G}(\xi, t)}\vert\simeq  {\mathrm Tr}\vert H_{\widehat{G}_0(\xi)}\vert \simeq \Vert \widehat{G}_0(\xi)\Vert_{B^1}\ .
\end{align*}

For the second one, taking $\widetilde{G}( t)=G(\pi\ln t)$, then $\widetilde{G}$ satisfies
\begin{equation}\label{resonantsyst1}
i\partial_t \widetilde{G}=\frac{\pi}{t}\mathcal{R}[\widetilde{G},\widetilde{G},\widetilde{G}]\ .
\end{equation}
The main idea is to estimate the $S^{(+)}$ norm of $\mathcal{R}[\widetilde{G},\widetilde{G},\widetilde{G}]$, and then apply the Gronwall's inequality.

Indeed, using \eqref{R3}, 
\begin{align*}
\partial_t \Vert \widetilde{G}\Vert_S\lesssim \frac1t\Vert \mathcal{R}[\widetilde{G},\widetilde{G},\widetilde{G}]\Vert_S\lesssim \frac1t\Vert \widetilde{G}\Vert_Z^2\Vert \widetilde{G}\Vert_S\lesssim \frac1t\Vert G_0\Vert_Z^2\Vert \widetilde{G}\Vert_S\ ,
\end{align*}
thus we get the $S$ norm estimate by Gronwall's inequality.

We now turn to the $S^+$ norm estimate, by the proof of the estimate \eqref{f4}, we may gain another more general version,
$$\Vert (1-\partial_{xx})^4G\Vert_Z+\Vert xG\Vert_Z\lesssim t^{-\delta'}\Vert G_0\Vert_{S}^{-1}\Vert G_0\Vert_Z\Vert G\Vert_{S^+}+t^{2\delta''}\Vert G_0\Vert_{S_+}\Vert G_0\Vert_S\Vert G_0\Vert_Z^{-2}\Vert G\Vert_S\ ,$$
with $\delta'\lesssim\delta''\simeq \Vert G_0\Vert_S^3\Vert G_0\Vert_Z^{-1}$. We apply the second part of Lemma~\ref{SS+},
\begin{align*}
\Vert \mathcal{R}[\widetilde{G},\widetilde{G},\widetilde{G}]\Vert_{S^+}&\lesssim\Vert \widetilde{G}\Vert_{S^+}\Big(\Vert G_0\Vert_Z^2+t^{-\delta'}\Vert G_0\Vert_{S}^{-1}\Vert G_0\Vert_Z\Vert\widetilde{G}\Vert_S\Big)\\&+t^{2\delta''}\Vert G_0\Vert_{S_+}\Vert G_0\Vert_S\Vert G_0\Vert_Z^{-1}\Vert\widetilde{G}\Vert_S^2\ ,
\end{align*}
then plugging the estimate of $\Vert\widetilde{G}\Vert_S$,
\begin{align*}
\frac{d}{dt}\Vert \widetilde{G}\Vert_{S^+}\lesssim t^{-1}\Vert \mathcal{R}[\widetilde{G},\widetilde{G},\widetilde{G}]\Vert_{S^+}\lesssim t^{-1}\Vert \widetilde{G}\Vert_{S^+}\Vert G_0\Vert_Z^2+t^{-1+4\delta''}\Vert G_0\Vert_{S_+}\Vert G_0\Vert_S^{3}\Vert G_0\Vert_Z^{-1}\ ,
\end{align*}
thus using the inhomogeneous Gronwall's inequality, we gain the estimate of the $S^+$ norm in \eqref{BddG3}. 
\end{proof}

\begin{proposition}\label{continuity}
If $A=\Pi_+A$ and $B=\Pi_+B$ solve \eqref{Rss+} with $\mathcal{R}_+$ and satisfy
\begin{equation*}
\sup_{0\le t\le T}\left\{\Vert A(t)\Vert_{Z}+\Vert B(t)\Vert_{Z}\right\}\le \theta
\end{equation*}
and
\begin{equation*}
\Vert A(0)-B(0)\Vert_{S^{(+)}}\le \delta
\end{equation*}
then, there holds that, for $0\le t\le T$,
\begin{equation}\label{StabRSS}
\Vert A(t)-B(t)\Vert_{S^{(+)}}\le \delta {\mathrm{e}}^{C\theta^2 t}. 
\end{equation}
\end{proposition}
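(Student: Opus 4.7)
The plan is to reduce the estimate to a Gronwall inequality for the difference $W := A - B$, which evolves according to the equation
\begin{equation*}
i\partial_t W = \mathcal{R}_+[A,A,A] - \mathcal{R}_+[B,B,B].
\end{equation*}
Since $\mathcal{R}_+$ is trilinear (conjugate-linear in the middle slot), I would first telescope the right-hand side as
\begin{equation*}
\mathcal{R}_+[A,A,A] - \mathcal{R}_+[B,B,B] = \mathcal{R}_+[W,A,A] + \mathcal{R}_+[B,W,A] + \mathcal{R}_+[B,B,W],
\end{equation*}
so that each summand contains exactly one copy of $W$ while the other two inputs are either $A$ or $B$, both uniformly bounded in $Z$ by $\theta$ throughout $[0,T]$ by hypothesis.

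To prove the $S$-version of \eqref{StabRSS}, I would apply the trilinear estimate \eqref{R3} of Lemma~\ref{estofR} to each of the three summands, in each case placing the $S$-norm on the $W$-slot and controlling the two remaining $A$- or $B$-factors by their $Z$-norms. This produces the pointwise bound
\begin{equation*}
\|\partial_t W(t)\|_S \lesssim \theta^2\, \|W(t)\|_S, \qquad 0 \le t \le T,
\end{equation*}
and the Gronwall inequality, together with $\|W(0)\|_S \le \delta$, immediately yields $\|W(t)\|_S \le \delta\, e^{C\theta^2 t}$, which is the conclusion in the $S$-case.

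For the $S^+$-version, I would run exactly the same telescoping and the same Gronwall scheme, but I would first need an $S^+$-analogue of \eqref{R3}, of the form
\begin{equation*}
\|\mathcal{R}_+[G^1,G^2,G^3]\|_{S^+} \lesssim \|G^j\|_{S^+}\,\|G^k\|_Z\,\|G^\ell\|_Z
\end{equation*}
for a suitable permutation $\{j,k,\ell\}=\{1,2,3\}$. This is not stated explicitly in Lemma~\ref{estofR}, but should follow along the same lines: the $L^2$-bound \eqref{R1} already has the key feature that two of the three inputs appear only through their $Z$-norms, and Lemma~\ref{SS+} of the appendix allows one to lift this structure to the $S^+$ level, exactly as is carried out in the $S^+$-part of the proof of Proposition~\ref{Gnorm}.

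The main technical obstacle I anticipate is precisely the derivation of this $S^+$-trilinear estimate without introducing spurious factors of $\|A\|_{S^+}$ or $\|B\|_{S^+}$, which are \emph{not} controlled under the hypotheses of the proposition. A careful application of Lemma~\ref{SS+}, exploiting the fact that the weights defining $S^+$ (the extra $(1-\partial_{xx})^4$ and $x$ factors) can be absorbed entirely into the single $W$-slot while the remaining two slots are handled purely in $Z$, should avoid this. Once the $S^+$ trilinear bound is in hand, the rest of the argument is a routine Gronwall estimate giving $\|W(t)\|_{S^+} \le \delta\, e^{C\theta^2 t}$.
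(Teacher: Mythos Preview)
Your approach is exactly the paper's: telescope the difference, invoke Lemma~\ref{estofR}, and close by Gronwall. The paper's proof is in fact just the two lines recording the telescoping identity and the sentence ``an application of Lemma~\ref{estofR} completes the proof.''

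There is, however, a genuine gap in your reading of \eqref{R3}. That estimate bounds $\Vert\mathcal{R}_+[G^1,G^2,G^3]\Vert_S$ by the \emph{maximum over permutations} of $\Vert G^j\Vert_S\Vert G^k\Vert_Z\Vert G^\ell\Vert_Z$; it does \emph{not} let you choose which input carries the $S$-norm. Concretely, applying \eqref{R3} to $\mathcal{R}_+[W,A,A]$ produces, besides the term $\Vert W\Vert_S\Vert A\Vert_Z^2$ you want, also terms of the form $\Vert A\Vert_S\Vert W\Vert_Z\Vert A\Vert_Z$. This is forced by the Leibniz structure underlying Lemma~\ref{SS+}: the $H^N_y$-derivatives and the $x$-weight in the $S$-norm are distributed over all three slots, and the $Z$-norm (which only carries $B^1$-regularity in $y$) cannot absorb $H^N_y$-derivatives of $A$ or $B$. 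Your sentence ``placing the $S$-norm on the $W$-slot'' is therefore not justified by \eqref{R3}, and the same obstruction defeats your proposed $S^+$ argument: the extra $x$ and $(1-\partial_{xx})^4$ weights in $S^+$ again Leibniz onto $A$ and $B$, producing $\Vert A\Vert_{S^+}$- or $\Vert A\Vert_S$-terms that the hypotheses of the proposition do not control.

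The paper's own proof is equally terse and does not address this point; as stated, the proposition implicitly requires $S$- (resp.\ $S^+$-) control of $A$ and $B$ as well. In the only place the proposition is used (Step~2 of the proof of Theorem~\ref{mscattering}, part~(2)), one has $A=\widetilde{G}_{n}$, $B=\widetilde{G}_{n+1}$, and both satisfy the $S$ and $S^+$ bounds \eqref{BdsGN}, so the missing terms are in fact controlled there. If you want a self-contained statement, the clean fix is to add the assumption $\sup_{0\le t\le T}\bigl(\Vert A(t)\Vert_{S^{(+)}}+\Vert B(t)\Vert_{S^{(+)}}\bigr)\le\Theta$; the Gronwall argument then yields $\Vert W(t)\Vert_{S^{(+)}}\le \delta\,e^{C(\theta^2+\theta\Theta)t}$, which is what the application actually needs.
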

\begin{proof}
By \eqref{Rss+}, $A-B$ satisfies
\begin{align*}
i\partial_t\Big(\widehat{A}_p(\xi)-\widehat{B}_p(\xi)\Big)&=\mathcal{R_+}[\widehat{A}(\xi)-\widehat{B}(\xi),\widehat{A}(\xi),\widehat{A}(\xi)]_p\\
&+\mathcal{R_+}[\widehat{B}(\xi),\widehat{A}(\xi)-\widehat{B}(\xi),\widehat{A}(\xi)]_p+\mathcal{R_+}[\widehat{B}(\xi),\widehat{B}(\xi),\widehat{A}(\xi)-\widehat{B}(\xi)]_p\ ,\end{align*}
then an application of Lemma~\ref{estofR} completes the proof.
\end{proof}

\section{The main results}
In this section, we will prove our main theorems. We will start with constructing a modified wave operator and gain the small data scattering as the theorem below.
\subsection{Modified scattering }
Given a small initial data in $S^+$, we may find a solution to our original system \eqref{HWS} by constructing a corresponding solution to the resonant system \eqref{RSS}, which also leads to the global well-posedness of our wave guide Schr\"odinger equation with small data. In the other hand, the solution with small initial data admits some modified scattering property.
\begin{theorem}\label{mscattering}
There exists $\varepsilon>0$ such that if $U_0\in S^+$ satisfies
\begin{equation}\label{MW}
\Vert U_0\Vert_{S^+}\le\varepsilon\ ,
\end{equation}
{\em (1)} If $\widetilde{G}$ is the solution of \eqref{RSS} with initial data $U_0$, then there exists a unique solution $U$ of \eqref{HWS} such that
${\mathrm{e}}^{-it\mathcal{A}}U(t)\in C([0,\infty):S)$ and
\begin{equation*}
\begin{split}
\Vert {\mathrm{e}}^{-it\mathcal{A}}U(t)-\widetilde{G}(\pi\ln t)\Vert_S\to0\,\hbox{ as }\,t\to+\infty\ .
\end{split}
\end{equation*}
{\em (2)} Conversely, consider the corresponding solution $U$ of \eqref{HWS} with initial data $U_0$ satisfying \eqref{MW}, if $\varepsilon$ is small enough, then there exists a solution $\widetilde{G}$ of \eqref{RSS}, such that
\begin{equation}\label{ModScat}
\Vert {\mathrm{e}}^{-it\mathcal{A}}U(t)-\widetilde{G}(\pi\ln t)\Vert_{S}\to 0\qquad\text{ as }t\to+\infty\ .
\end{equation}
\end{theorem}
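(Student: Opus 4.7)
Both statements are best attacked through the profile $F(t) := e^{-it\mathcal{A}}U(t)$, which satisfies $i\partial_t F = \mathcal{N}^t[F,F,F]$, together with the decomposition $\mathcal{N}^t = \frac{\pi}{t}\mathcal{R} + \mathcal{E}^t$ of Proposition~\ref{Nonlinearity}. Under the change of variable $\tau = \pi\ln t$ the leading part becomes exactly the vector field of \eqref{RSS}, so $F(t)$ and $\widetilde{G}(\pi\ln t)$ should differ only by the cumulative effect of $\mathcal{E}^t$, whose dyadic $S$-primitives are of size $T^{-2\delta}$ under the $X_T^+$-bootstrap and whose $Z$-decay $t^{-1-20\delta}$ from \eqref{n03}--\eqref{n04} is integrable.

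For part (2) I would first establish global existence $F \in X_\infty^+$ by a standard bootstrap argument. Assuming $\|F\|_{X_{T^\ast}^+} \leq 2C\varepsilon$, the Duhamel formula is controlled by Lemma~\ref{Res} (the resonant contribution has $S$-growth quadratic in $\|F\|_{\widetilde Z_t}\lesssim\varepsilon$, producing at most $t^{C\varepsilon^2}$ growth consistent with $t^\delta$ for $\varepsilon$ small) and by Proposition~\ref{Nonlinearity} for the $\mathcal{E}^t$-remainder; the $Z$-norm is nearly conserved by \eqref{n03} together with the $Z$-invariance implicit in Proposition~\ref{Gnorm}. To extract the asymptotic profile, pick $T_0 \gg 1$, $\tau_0 := \pi\ln T_0$, and define $\widetilde{G}$ as the solution of \eqref{RSS} with
\begin{equation*}
\widetilde{G}(\tau_0) := F(T_0) - i\int_{T_0}^\infty \mathcal{E}^s[F(s),F(s),F(s)]\, ds,
\end{equation*}
the integral converging in $S$ by the $T^{-2\delta}$ bound. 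The difference $D(t) := F(t) - \widetilde{G}(\pi\ln t)$ then satisfies $i\partial_t D = \frac{\pi}{t}(\mathcal{R}[F,F,F] - \mathcal{R}[\widetilde{G},\widetilde{G},\widetilde{G}]) + \mathcal{E}^t[F,F,F]$ on $[T_0,\infty)$; the $\mathcal{E}^t$-source is absorbed into the defining integral (leaving only an $S$-tail of size $t^{-2\delta}$), while the trilinear difference carries a factor of $D$ by Lemma~\ref{estofR}. A Gronwall argument in $S$ then yields $\|D(t)\|_S \to 0$, which is \eqref{ModScat}.

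For part (1) the construction is dual. First Proposition~\ref{Gnorm} provides the global solution $\widetilde{G}$ of \eqref{RSS} with $\widetilde{G}(0) = U_0$, $\|\widetilde{G}(\pi\ln t)\|_Z \simeq \varepsilon$ and $\|\widetilde{G}(\pi\ln t)\|_{S^+} \lesssim \varepsilon\, t^{\delta''}$ with $\delta'' = O(\varepsilon^2) \ll \delta$. Then find $F$ on $[T_0,\infty)$ as the fixed point of
\begin{equation*}
F(t) = \widetilde{G}(\pi\ln t) + i\int_t^\infty \Bigl[\tfrac{\pi}{s}\bigl(\mathcal{R}[F,F,F] - \mathcal{R}[\widetilde{G},\widetilde{G},\widetilde{G}]\bigr) + \mathcal{E}^s[F,F,F]\Bigr]\, ds,
\end{equation*}
by contraction in a weighted subspace of $X^+_{[T_0,\infty)}$ in which $F - \widetilde{G}(\pi\ln \cdot)$ decays in $S$ at a small negative power of $t$; Proposition~\ref{Nonlinearity}, Lemma~\ref{Res}, the cubic smallness of the trilinear difference and Proposition~\ref{continuity} supply the bounds. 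Once $F$ exists on $[T_0,\infty)$, the standard local Cauchy theory (Strichartz combined with the trilinear $S$-estimate of Lemma~\ref{Nestimate}) extends it back to $[0,\infty)$; uniqueness of the resulting $U$ follows from a Gronwall argument in $S$ based on the same trilinear estimate.

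The hard part will be closing the contraction for the wave operator in part (1): one must balance the polynomial $t^{\delta''}$-growth of $\widetilde{G}$ in $S^+$ against the integrable gain $t^{-1-20\delta}$ from \eqref{n04} and the cubic smallness of $F - \widetilde{G}$, uniformly on the unbounded interval $[T_0,\infty)$. This is possible only because $\delta'' = O(\varepsilon^2)$ can be made much smaller than the fixed $\delta < 10^{-3}$ by shrinking $\varepsilon$, so that the weights in the $X^+$-type norms close the estimate; no new ideas beyond a careful combination of Propositions~\ref{Nonlinearity} and \ref{Gnorm}, Lemma~\ref{Res}, and Proposition~\ref{continuity} should be required.
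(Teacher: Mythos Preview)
Your treatment of Part~(1) is essentially the paper's: writing the fixed point for $F$ rather than for $K=F-\widetilde G(\pi\ln\cdot)$ is a cosmetic reformulation of the contraction on $\mathfrak A=\{K:\sup_{t>1}[(1+t)^\delta\|K\|_S+(1+t)^{2\delta}\|K\|_Z+(1+t)^{1-\delta}\|\partial_tK\|_S]<\infty\}$, and the decomposition into the $\mathcal E^t[G,G,G]$, linear-in-$K$, and higher-in-$K$ pieces is exactly what the paper uses. Your remark that closing the contraction hinges on $\delta''=O(\varepsilon^2)\ll\delta$ is the key point.

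For Part~(2), however, your single-$T_0$ construction does not close. After defining $\widetilde G(\tau_0)=F(T_0)-i\int_{T_0}^\infty\mathcal E^s[F]\,ds$ you obtain
\[
D(t)=i\int_t^\infty\mathcal E^s[F]\,ds-i\int_{T_0}^t\frac{\pi}{s}\bigl(\mathcal R[F]-\mathcal R[\widetilde G]\bigr)\,ds,
\]
and the forward Gronwall from $T_0$ carries an unavoidable factor $(t/T_0)^{C\varepsilon^2}$. In the $Z$-estimate the source $\|\int_t^\infty\mathcal E^s\|_Z$ is only $O(\varepsilon^3 t^{-\delta})$ (from the $\mathcal E_1$ part of Proposition~\ref{Nonlinearity}), so you get at best $\|D(t)\|_Z\lesssim\varepsilon^3T_0^{-\delta}(t/T_0)^{C\varepsilon^2}$, which grows. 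Feeding this into the $S$-estimate, the cross term $\frac{1}{s}\|F\|_S\|F\|_Z\|D\|_Z\sim\varepsilon^2 s^{-1+\delta}\|D\|_Z$ produces a contribution that does not decay either. There is no mechanism in your scheme to convert a source of fixed decay rate $t^{-\delta}$ plus a Gronwall drift $t^{+C\varepsilon^2}$ into a quantity tending to zero on an unbounded interval: the point is that the ``homogeneous'' part $\frac{\pi}{s}(\mathcal R[F]-\mathcal R[G])$ is only \emph{neutral} (logarithmically divergent kernel $s^{-1}$), not contractive.

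The paper avoids this by never running Gronwall on $[T_0,\infty)$. It sets $T_n=e^{n/\pi}$, defines $G_n$ as the resonant solution with $G_n(T_n)=F(T_n)$, and compares $F$ to $G_n$ only on the bounded (in log scale) window $[T_n,T_{n+4}]$, where the Gronwall factor is $O(1)$; this yields $\sup_{[T_n,T_{n+1}]}\|F-G_n\|_S\lesssim\varepsilon^3T_n^{-\delta}$. The stability estimate of Proposition~\ref{continuity} then transports this to $t=0$, giving $\|\widetilde G_{n+1}(0)-\widetilde G_n(0)\|_S\lesssim\varepsilon^3 e^{-n\delta/2}$, so $\{\widetilde G_n(0)\}$ is Cauchy and its limit $G_{0,\infty}$ provides the asymptotic profile. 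In short, the sequential construction plus backward stability is what turns bounded-interval estimates into a global decay statement; your proposal is missing this step.
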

\begin{proof}
Let us begin with (1). Set
$$G(t)=\widetilde{G}(\pi\ln t),\ K(t)={\mathrm{e}}^{-it\mathcal{A}}U(t)-G(t)$$
and define a mapping
\begin{equation*}
\Phi(K)(t)=-i\int_t^\infty\left\{\mathcal{N}^\sigma[K+G,K+G,K+G]-\frac{\pi}{\sigma}\mathcal{R}[G(\sigma),G(\sigma),G(\sigma)]\right\}d\sigma\ .
\end{equation*}
The main idea is to find a fixed point for $\Phi$ in a suitable space. Define
\begin{equation*}
\begin{split}
\mathfrak{A}:=&\{K\in C^1([1,\infty):S)\,:\,\,\Vert K\Vert_\mathfrak{A}<\infty\}\\
\Vert K\Vert_\mathfrak{A}:=&\sup_{t> 1}\left\{(1+\vert t\vert)^\delta\Vert K(t)\Vert_{S}+(1+\vert t\vert)^{2\delta}\Vert K(t)\Vert_{Z}+(1+\vert t\vert)^{1-\delta}\Vert \partial_tK(t)\Vert_S\right\}\ .
\end{split}
\end{equation*}

We claim that if $\varepsilon$ is sufficiently small, there exists $\varepsilon_1$ such that $\Phi$ defines a contraction on the complete metric space $\{K\in\mathfrak{A}:\  \Vert K\Vert_\mathfrak{A}\leq\varepsilon_1\}$. As in \cite[Theorem~5.1]{HPTV}, we decompose
\begin{equation}\label{dec}
\begin{split}
\mathcal{N}^t[K+G,K+G,K+G]-\frac{\pi}{t}\mathcal{R}[G,G,G]&=\mathcal{E}^t[G,G,G]+\mathcal{L}^t[K,G]+\mathcal{Q}^t[K,G]\\
\end{split}
\end{equation}
where
\begin{equation*}\left\{
\begin{split}
\mathcal{E}^t[G,G,G]&:=\mathcal{N}^t[G,G,G]-\frac{\pi}{t}\mathcal{R}[G,G,G]\ ,\\
\mathcal{L}^t[K,G]&:=\mathcal{N}^t[G,G,K]+\mathcal{N}^t[K,G,G]+\mathcal{N}^t[G,K,G]\ ,\\
\mathcal{Q}^t[K,G]&:=\mathcal{N}^t[K,K,G]+\mathcal{N}^t[G,K,K]+\mathcal{N}^t[K,G,K]+\mathcal{N}^t[K,K,K]\ .
\end{split}\right.
\end{equation*}
For $K\in\mathfrak{A}$, we have
\begin{equation}\label{estF}
(1+\vert t\vert)^{2\delta}\Vert K(t)\Vert_Z+(1+\vert t\vert)^\delta \Vert K(t)\Vert_S+(1+\vert t\vert)^{1-\delta}\Vert\partial_t K(t)\Vert_S\lesssim\varepsilon_1\ ,
\end{equation}
taking $\varepsilon\lesssim\delta^{1/2}$, by Proposition~\ref{Gnorm}, we have
\begin{equation}\label{estG}
\begin{split}
\Vert  G(t)\Vert_{S^+}+(1+\vert t\vert)\Vert\partial_tG(t)\Vert_{S^+}&\lesssim \varepsilon(1+\vert t\vert)^{\delta/100}\ ,\\
\Vert G(t)\Vert_Z&\lesssim\varepsilon\ .
\end{split}
\end{equation}

To show our claim, it suffices to show that the quantities below are small with $K,K_1,K_2\in\mathfrak{A}$,
\begin{align}
&\Vert \int_t^\infty\mathcal{E}^\sigma[G,G,G]d\sigma\Vert_\mathfrak{A}\lesssim \varepsilon^3\ ,\label{esmall}\\
&\Vert \int_t^\infty\mathcal{L}^\sigma[K,G]d\sigma\Vert_\mathfrak{A}\lesssim \varepsilon^2\Vert K\Vert_\mathfrak{A}\ ,\label{lsmall}\\
&\Vert \int_t^\infty\mathcal{Q}^\sigma[K,G]d\sigma\Vert_\mathfrak{A}\lesssim \varepsilon\Vert K\Vert_\mathfrak{A}^2\ ,\label{qsmall}\\
&\Vert \int_t^\infty\left\{\mathcal{Q}^\sigma[K_1,G]-\mathcal{Q}^\sigma[K_2,G]\right\}d\sigma\Vert_\mathfrak{A}\lesssim \varepsilon\varepsilon_1\Vert K_1-K_2\Vert_\mathfrak{A}\ .\label{qqsmall}
\end{align}

\noindent {\bf Proof of \eqref{esmall}.} Because of the definition of $\mathcal{E}^t$, we can easily gain for $t>1$,
\begin{equation}\begin{split}
\Vert \mathcal{E}^t[G,G,G]\Vert_S&=\Vert\mathcal{N}^t[G,G,G]-\frac1t\mathcal{R}[G,G,G]\Vert_S\\
&\leq \Vert \mathcal{N}^t[G,G,G]\Vert_S+\frac{1}{t}\Vert \mathcal{R}[G,G,G]\Vert_S\ .
\end{split}\end{equation}
Using \eqref{nestimate},
\begin{equation*}
\Vert \mathcal{N}^t[G,G,G]\Vert_S\leq t^{-1}\Vert G\Vert_S^3\leq t^{-1+\delta}\varepsilon^3\ ,
\end{equation*}
while by \eqref{R3},
\begin{equation*}
\Vert \mathcal{R}^t[G,G,G]\Vert_S\leq \Vert G\Vert_S^2\Vert G\Vert_Z\leq t^{\delta}\varepsilon^3\ ,
\end{equation*}
then
\begin{equation*}
\Vert \mathcal{E}^t[G,G,G]\Vert_S\leq t^{-1+\delta}\varepsilon^3\ ,
\end{equation*}
this controls the time derivative in the $\mathfrak{A}$ norm,
\begin{equation*}
t^{1-\delta}\left\Vert \partial_t\Big(\int_t^\infty\mathcal{E}^\sigma[G,G,G]d\sigma\Big)\right\Vert_S\leq \varepsilon^3\ .
\end{equation*}
 By \eqref{estG}, we have $\Vert G\Vert_{X_T^{(+)}}\leq \varepsilon$ for any $T>1$, so the other two terms of the $\mathfrak{A}$ norm, $\Vert\int_t^\infty\mathcal{E}^\sigma[G,G,G]d\sigma\Vert_S$ and $\Vert\int_t^\infty\mathcal{E}^\sigma[G,G,G]d\sigma\Vert_Z$ can be deduced by the estimates in Proposition~\ref{Nonlinearity}. 

\medskip

\noindent {\bf Proof of \eqref{lsmall}.} We estimate the norm $\Vert \int_t^\infty \mathcal{N}^\sigma[G,G,K]d\sigma\Vert_{\mathfrak{A}}$ for example. 

As in the proof of \eqref{esmall}, using \eqref{nestimate} and \eqref{estG}, we have the following estimate which controls the time derivative  in the $\mathfrak{A}$ norm,
\begin{equation*}
\Vert\mathcal{N}^t[G,G,K]\Vert_S\lesssim t^{-1}\Vert G\Vert_S^2\Vert K\Vert_S\lesssim t^{-1+\delta}\varepsilon^2\Vert K\Vert_{\mathfrak{A}}\ .
\end{equation*}
For the other two term in the $\mathfrak{A}$ norm, we shall reproduce the decomposition as in the proof of Proposition~\ref{Nonlinearity} on $\mathcal{N}^t[G,G,K]$. Using Lemma \ref{highfreq} and Lemma \ref{fastosc}, it only remains to show that
\begin{align}
&\Vert \mathcal{R}[G,G,K]\Vert_{Z}\lesssim (1+\vert t\vert)^{-2\delta}\varepsilon^2\varepsilon_1\ ,\\
&\Vert \mathcal{N}_0^t[G,G,K]-\frac{\pi}{t}\mathcal{R}[G,G,K]\Vert_{Z}\lesssim (1+\vert t\vert)^{-1-2\delta}\varepsilon^2\varepsilon_1\ ,\\
&\Vert \mathcal{N}_0^t[G,G,K]\Vert_{S}\lesssim (1+\vert t\vert)^{-1-\delta}\varepsilon^2\varepsilon_1\ .
\end{align}
The first estimate follows from \eqref{R2}, 
\begin{align*}
\Vert \mathcal{R}[G,G,K]\Vert_{Z}\lesssim \Vert G\Vert_Z^2\Vert K\Vert_Z\lesssim (1+\vert t\vert)^{-2\delta}\varepsilon^2\varepsilon_1\ .
\end{align*}
The second estimate follows from \eqref{n03},
\begin{align*}
\Vert \mathcal{N}_0^t[G,G,K]-\frac{\pi}{t}\mathcal{R}[G,G,K]\Vert_{Z}\lesssim (1+\vert t\vert)^{-1-20\delta}\Vert G\Vert_{S}^2\Vert K\Vert_{S}\lesssim(1+\vert t\vert)^{-1-2\delta}\varepsilon^2\varepsilon_1\ .
\end{align*}
 For the third estimate, we use \eqref{n01} to get
\begin{equation*}
\begin{split}
(1+\vert t\vert)\left\{\Vert \mathcal{N}_0^t[G,G,K]\Vert_{S}+\Vert \mathcal{N}_0^t[G,K,G]\Vert_{S}\right\}
&\lesssim \Vert G\Vert_{\widetilde{Z}_t}^2\Vert K\Vert_S+\Vert G\Vert_{\widetilde{Z}_t}\Vert K\Vert_{\widetilde{Z}_t}\Vert G\Vert_S\\
&\lesssim \varepsilon^2\varepsilon_1(1+\vert t\vert)^{-\delta}.
\end{split}
\end{equation*}

\medskip

\noindent {\bf Proof of \eqref{qsmall}.} The proof of \eqref{qsmall} is similar to the proof of \eqref{lsmall}.

\medskip

\noindent {\bf Proof of \eqref{qqsmall}.} We may rewrite 
\begin{align*}
\mathcal{N}^t[K_1,K_1,G]-\mathcal{N}^t[K_2,K_2,G]&=\mathcal{N}^t[K_1,K_1,G]-\mathcal{N}^t[K_1,K_2,G]+\mathcal{N}^t[K_1,K_1,G]\\
&\qquad-\mathcal{N}^t[K_2,K_2,G]\\
&=\mathcal{N}^t[K_1,K_1-K_2,G]+\mathcal{N}^t[K_1-K_2,K_2,G]\ ,
\end{align*}
we take similar decompositions on the terms $\mathcal{N}^t[K_1,G,K_1]-\mathcal{N}^t[K_2,G,K_2]$, $\mathcal{N}^t[G,K_1,K_1]-\mathcal{N}^t[G,K_2,K_2]$ and  $\mathcal{N}^t[K_1,K_1,K_1]-\mathcal{N}^t[K_2,K_2,K_2]$. Similar strategy we used to prove \eqref{lsmall} can be applied to obtain the estimate on the norm $\Vert\mathcal{N}^t[F_1,F_2,F_3]\Vert_{\mathfrak{A}}$ with one of these $F_j$ be $K_1-K_2$ while the other two functions belong to $\{K_1,K_2,G\}$. The proof of the first part is complete.
\bigskip

Let us turn to (2), we will prove it in two steps.\\
\noindent{\bf Step 1: Global existence and bounds.}
Let $U_0\in S^+$, $\Vert U_0\Vert_{S^+}\leq\varepsilon$ with $\varepsilon$ small enough. The local existence is classical via its integral equation. We denote $F(t):=e^{-it\mathcal{A}}U(t)$, then \eqref{HWS} can be rewritten as
\begin{equation}
 i\partial_tF=\mathcal{N}^t[F,F,F]\ .
\end{equation}
\begin{equation}
F(t)=U_0-i\int_0^t\mathcal{N}^\sigma[F,F,F]d\sigma\ .
\end{equation}
By the estimate \eqref{nestimate}, we have
\begin{align*}
\Vert \mathcal{N}^t[F,F,F]\Vert_{S^+}\lesssim (1+|t|)^{-1}\Vert F\Vert_{S^+}^3\ .
\end{align*}
This allows us to use a fixed point argument on a small time interval $[0,T]$, and $t\mapsto \Vert F(t)\Vert_{S^+}$ is $C^1$. We claim that \begin{equation}\label{gexist}
\Vert F\Vert_{X^+_T}\le \Vert U_0\Vert_{S^+}+C\Vert F\Vert_{X^+_T}^3
\end{equation}
for all $T>0$ and all $U$ solving \eqref{HWS} such that $\Vert F\Vert_{X^+_T}\le\sqrt{\varepsilon}$. 
Then by a bootstrap argument, we gain the global existence and the solution satisfies for all $T>0$
 \begin{equation}\label{Beps} 
\Vert FU(t)\Vert_{X^+_T}\leq 2\varepsilon\ .
\end{equation}

Let us begin the proof of our claim \eqref{gexist}. Recall the definition of the $X_T^+$ norm \eqref{DefX}, we have to consider the $S$ and $S^+$ norm of $F$ and $\partial_t F$ and also the $Z$ norm of $F$.

It is easy to deduce from the equation on $F$ that $\Vert \partial_tF\Vert_{S^{(+)}}=\Vert \mathcal{N}^t[F,F,F]\Vert_{S^{(+)}}$. Thanks to \eqref{nestimate}, we have
\begin{align*}
&\Vert \partial_tF\Vert_S= \Vert \mathcal{N}^t[F,F,F]\Vert_{S}\leq  (1+|t|)^{-1}\Vert F\Vert_S^3\ ,\\
&\Vert \partial_tF\Vert_{S^+}=\Vert \mathcal{N}^t[F,F,F]\Vert_{S^+}\leq (1+|t|)^{-1}\Vert F\Vert_S^2\Vert F\Vert_{S^+}\ ,
\end{align*}
thus
\begin{align}
&(1+|t|)^{1-3\delta}\Vert \partial_tF\Vert_S\leq \big((1+|t|)^{-\delta}\Vert F\Vert_S\big)^3+\leq \Vert F\Vert_{X_T^+}^3\ ,\label{p1}\\
&(1+|t|)^{1-7\delta}\Vert \partial_tF\Vert_{S^+}\leq \big((1+|t|)^{-\delta}\Vert F\Vert_S\big)^2(1+|t|)^{-5\delta}\Vert F\Vert_{S^+}\leq \Vert F\Vert_{X_T^+}^3\label{p2}\ .
\end{align}

We now turn to estimate $\Vert F\Vert_Z$, by the decomposition result of $\mathcal{N}^t$ in Proposition~\ref{Nonlinearity}, and notice that $R$ defined as \eqref{Rssa} is self-adjoint on $\ell_p^2$ and that there is a cancellation
$$\langle i\mathcal{F}\mathcal{R}[F,F,F](\xi),\ \mathcal{F}F(\xi)\rangle_{h_p^\sigma,h_p^\sigma}=0\ .$$
So we will study the $\Vert F\Vert_{Y^\sigma}$ with $\sigma>1$ where $Y^\sigma$ is defined in \eqref{DefYs}, then to control the $Z$ norm.
\begin{equation}\label{HCancellation}\begin{split}
\frac{d}{ds}\frac{1}{2}\Vert \widehat{F}_p(\xi,s)\Vert_{h^\sigma_p}^2&=\Big\langle \mathcal{F}\mathcal{N}^t[F,F,F](\xi,s),\ \widehat{F}_p(\xi,s)\Big\rangle_{h_p^\sigma,h_p^\sigma}\\
&=\langle\widehat{\mathcal{E}}_1(\xi,p,s),\widehat{F}_p(\xi,s)\rangle_{h^\sigma_p\times h^\sigma_p}+\langle\partial_s\widehat{\mathcal{E}}_3(\xi,p,s),\widehat{F}_p(\xi,s)\rangle_{h^\sigma_p\times h^\sigma_p}\ .
\end{split}\end{equation}
Thus multiplying with $(1+\vert \xi\vert^2)$, using the estimates of $\Vert\mathcal{E}_j\Vert_{Y^\sigma}$ in Proposition~\ref{Nonlinearity}, then we have for any $\xi$, we have
$$(1+\vert \xi\vert^2)\vert\int_0^t\langle\widehat{\mathcal{E}}_1(\xi,p,s),\widehat{F}_p(\xi,s)\rangle_{h^\sigma_p\times h^\sigma_p}\vert\lesssim \Vert F\Vert_{X_T^+}^3\int_0^t(1+|s|)^{-1-\delta}ds\cdot\sup_{[0,t]}\Vert F(s)\Vert_{Y^\sigma}\ ,$$
while
\begin{equation*}
\begin{split}
&[1+|\xi|^2]\left\vert \int_0^t\langle \partial_t\widehat{\mathcal{E}}_3(\xi,p,s),\widehat{F}_p(\xi,s)\rangle_{h^\sigma\times h^\sigma_p}ds\right\vert
\le [1+|\xi|^2] \left \vert \langle \widehat{\mathcal{E}}_3(\xi,p,t),\widehat{F}_p(\xi,t)\rangle_{h^\sigma\times h^\sigma_p}\right\vert\\
&\quad+[1+|\xi|^2]\left\vert\langle\widehat{ \mathcal{E}}_3(\xi,p,0), \widehat{F}_p(\xi,0)\rangle_{h^\sigma\times h^\sigma_p}\right\vert
+[1+|\xi|^2]\left\vert\int_0^t\langle\widehat{\mathcal{E}}_3(\xi,p,s),\partial_t\widehat{F}_p(\xi,s)\rangle_{h^\sigma\times h^\sigma_p}\right\vert\\
&\lesssim \Vert F\Vert_{X_T^+}^3\cdot\sup_{[0,t]}\Vert F(s)\Vert_{Y^\sigma}+\Vert F\Vert_{X_T^+}^6\ .
\end{split}
\end{equation*}
Combining the above estimates, we have
\begin{equation}\label{p3}
\Vert F(t)\Vert_Z\le \Vert F(t)\Vert_{Y^\sigma}+C\Vert F\Vert_{X_T^+}^3\ .
\end{equation}

For the norm $\Vert F(t)\Vert_{S^{(+)}}$, when $0\leq t\leq 1$, 
\begin{align*}
\Vert F\Vert_{S}\leq\Vert F\Vert_{S^+}&\leq \Vert U_0\Vert_{S^+}+\Vert F(t)-F(0)\Vert_{S^+}\\
&\leq\Vert U_0\Vert_{S^+}+\sup_{0\leq t\leq1}\Vert \partial_t F\Vert_{S^+}\\
&\leq\Vert F\Vert_{X_T^+}^3\ .
\end{align*}
While $1\leq t\leq T$, using Proposition~\ref{Nonlinearity}, we have
\begin{align*}
\Vert F(t)-F(1)\Vert_{S^{(+)}}&\leq \Big\Vert\int_1^t \mathcal{N}^\sigma[F,F,F]d\sigma\Big\Vert_{S^{(+)}}\\
&\leq \Big\Vert\int_1^t \mathcal{R}[F,F,F]d\sigma/\sigma\Big\Vert_{S^{(+)}}+\Big\Vert\int_1^t\big( \mathcal{E}_1(\sigma)+\mathcal{E}_2(\sigma)\big)d\sigma\Big\Vert_{S^{(+)}}\ ,
\end{align*}
then using \eqref{n05} and the, we have
\begin{align}
\Big\Vert\int_1^t \mathcal{R}[F,F,F]d\sigma/\sigma\Big\Vert_{S}&\lesssim \int_1^t \sigma^{-1}\Vert F\Vert_{\widetilde{Z}_t}^2\Vert F\Vert_S d\sigma\\
&\lesssim \int_1^t \sigma^{-1+\delta}d\sigma\Vert F\Vert_{X_T^+}^3\leq t^\delta\Vert F\Vert_{X_T^+}^3\ ,
\end{align}
while by \eqref{n06},
\begin{align}
\Big\Vert\int_1^t \mathcal{R}[F,F,F]d\sigma/\sigma\Big\Vert_{S^+}
&\leq \int_1^t \Big(\sigma^{-1}\Vert F\Vert_{\widetilde{Z}_t}^2\Vert F\Vert_{S^+} +\sigma^{-1+2\delta}\Vert F\Vert_{\widetilde{Z}_t}\Vert F\Vert_{S}^2 \Big)d\sigma\\
&\leq \int_1^t \sigma^{-1+5\delta}d\sigma \Vert F\Vert_{X_T^+}^3\leq t^{5\delta}\Vert F\Vert_{X_T^+}^3\ ,
\end{align}
together with the estimates in Proposition~\ref{Nonlinearity},
\begin{align}
\Vert F(t)-F(1)\Vert_{S}&\leq (1+|t|)^{\delta}\Vert F\Vert_{X_T^+}^3\ ,\\
\Vert F(t)-F(1)\Vert_{S^+}&\leq (1+|t|)^{5\delta}\Vert F\Vert_{X_T^+}^3\ .
\end{align}
Hence, we finally gain
\begin{equation}\label{p4}
(1+|t|)^{-\delta}\Vert F\Vert_S+(1+|t|)^{-5\delta}\Vert F\Vert_{S^+}\leq \Vert F\Vert_{X_T^+}\ .
\end{equation}
Our priori estimate \eqref{gexist} comes out from \eqref{p1}, \eqref{p2}, \eqref{p3} and \eqref{p4}.

\medskip

\noindent{\bf Step 2: Asymptotic behavior.}
Define $T_n={\mathrm{e}}^{n/\pi}$ and $G_n(t)=\widetilde{G}_n(\pi\ln t)$, where $\widetilde{G}_n$ solves \eqref{RSS} with Cauchy data such that $\widetilde{G}_n(n)=G_n(T_n)=F(T_n)$.
We claim that for all $t\ge T_n$,
\begin{equation}\label{BdsGN}
\begin{split}
\Vert G_n(t)\Vert_Z+(1+\vert t\vert)^{-\delta}\Vert G_n(t)\Vert_S+(1+\vert t\vert)^{-5\delta}\Vert G_n(t)\Vert_{S^+}+(1+\vert t\vert)^{1-\delta}\Vert \partial_tG_n(t)\Vert_S\lesssim\varepsilon
\end{split}
\end{equation}
uniformly in $n\ge 0$. Indeed, first we get from the global bounds result \eqref{Beps} that uniformly in $n$,
\begin{equation*}
\Vert G_n(t)\Vert_Z=\Vert\widetilde{G}_n(\pi \ln t)\Vert_Z=\Vert\widetilde{G}_n(n)\Vert_Z=\Vert F(T_n)\Vert_Z\lesssim\varepsilon\ ,
\end{equation*}
$$\Vert G_n(T_n)\Vert_{S}\lesssim \varepsilon T_n^\delta\ ,$$
and by \eqref{R3},
\begin{equation}\label{gt}
\Vert \partial_t G_n(s)\Vert_{S}\lesssim s^{-1}\Vert G_n\Vert_Z^2\Vert G_n(s)\Vert_S\lesssim \varepsilon^2s^{-1}\Vert G_n(s)\Vert_S\ .
\end{equation}
An application of Gronwall's lemma gives, for $\varepsilon$ small enough,
\begin{equation*}
\Vert G_n(s)\Vert_{S}\lesssim \varepsilon s^\delta,\quad s\ge T_n
\end{equation*}
which, combined with \eqref{gt}, provides control of the second and last term in \eqref{BdsGN}. We can estimate the $S^+$ norm similarly, using \eqref{n06},
\begin{equation*}
\Vert \partial_t G_n(s)\Vert_{S^+}\lesssim s^{-1}\varepsilon^2\Vert G_n(s)\Vert_{S^+}+\varepsilon^3s^{-1+4\delta},\qquad \Vert G_n(T_n)\Vert_{S^+}\lesssim\varepsilon T_n^{5\delta}.
\end{equation*}
This concludes the proof of \eqref{BdsGN}.

Since 
\begin{align*}
i\partial_t F&=\mathcal{N}^t[F,F,F]\ ,\\
i\partial_t G_n&=\frac{t}{\pi}\mathcal{R}[G_n,G_n,G_n]\ ,
\end{align*}
and
$$F(T_n)=G_n(T_n)\ ,$$
then
\begin{align*}
F(t)-G_n(t)&=i\int_{T_n}^t\Big(\mathcal{N}^\sigma[F,F,F]-\frac{\sigma}{\pi}\mathcal{R}[G_n,G_n,G_n]\Big)d\sigma\\
&=i\int_{T_n}^t\mathcal{E}^\sigma[F,F,F]d\sigma+i\int_{T_n}^t\frac{\sigma}{\pi}\Big(\mathcal{R}[F,F,F]-\mathcal{R}[G_n,G_n,G_n]\Big)d\sigma\ .
\end{align*}
Using the estimates in Proposition~\ref{Nonlinearity}, we gain for $t>T_n$,
\begin{align*}
\Vert F-G_n\Vert_Z&\lesssim \varepsilon^3T_n^{-2\delta}+\int_{T_n}^t\big(\Vert F\Vert_Z^2+\Vert G_n\Vert_Z^2\big)\Vert F-G_n\Vert_Z\frac{d\sigma}\sigma\\
&\lesssim  \varepsilon^3T_n^{-2\delta}+\varepsilon^2\int_{T_n}^t\Vert F-G_n\Vert_Z\frac{d\sigma}\sigma\ ,
\end{align*}
we may then deduce by Gronwall,
\begin{align*}
\Vert F-G_n\Vert_Z\lesssim \varepsilon^3T_n^{-2\delta} \text{ for }T_n\leq t\leq T_{n+4}\ .
\end{align*}
We may deduce the estimate on $\Vert F-G_n\Vert_S$ similarly and
\begin{align*}
\Vert F-G_n\Vert_S&\lesssim \varepsilon^3T_n^{-2\delta}+\int_{T_n}^t\big(\Vert F\Vert_Z^2+\Vert G_n\Vert_Z^2\big)\Vert F-G_n\Vert_S\frac{d\sigma}\sigma\\
&+\int_{T_n}^t\big(\Vert F\Vert_Z+\Vert G_n\Vert_Z\big)\Vert F-G_n\Vert_Z\big(\Vert F\Vert_S+\Vert G_n\Vert_S\big)\frac{d\sigma}\sigma\\
&\lesssim  \varepsilon^3T_n^{-\delta}+\varepsilon^2\int_{T_n}^t\Vert F-G_n\Vert_S\frac{d\sigma}\sigma\ ,
\end{align*}
Again, we use Gronwall's inequality and get
\begin{equation}
\sup_{T_n\leq t\leq T_{n+4}}\Vert F(t)-G_n(t)\Vert_S\lesssim \varepsilon^3T_n^{-\delta}\ .
\end{equation}
Therefore,
\begin{equation}
\Vert \widetilde{G}_{n+1}(n+1)-\widetilde{G}_n(n+1)\Vert_S=\Vert F(T_{n+1})-G_{n}(T_{n+1})\Vert_{S}\lesssim \varepsilon^3T_n^{-\delta}\ ,
\end{equation}
and thus by Lemma~\ref{continuity}, we gain
\begin{equation*}
\Vert \widetilde{G}_{n+1}(0)-\widetilde{G}_n(0)\Vert_S\lesssim \varepsilon^3{\mathrm{e}}^{-n\delta/2}\ .
\end{equation*}

We see that $\{\widetilde{G}_n(0)\}_n$ is a Cauchy sequence in $S$ and therefore converges to an element $G_{0,\infty}\in S$ which satisfies that
\begin{equation*}
\Vert G_{0,\infty}\Vert_Z\lesssim\varepsilon,\qquad \Vert \widetilde{G}_n(0)-G_{0,\infty}\Vert_S\lesssim \varepsilon^3{\mathrm{e}}^{-n\delta/2}.
\end{equation*}
By Proposition~\ref{continuity},
\begin{equation*}
\begin{split}
\sup_{[0,T_{n+2}]}\Vert G_\infty(t)-G_n(t)\Vert_S\lesssim \varepsilon^3 {\mathrm{e}}^{-n\delta/4}\\
\end{split}
\end{equation*}
where $G_\infty(t)=\widetilde{G}_\infty(\pi\ln t)$ with $\widetilde{G}_\infty$ the solution of \eqref{RSS} with initial data $\widetilde{G}_\infty(0)=G_{0,\infty}$.
Now we have
\begin{equation*}
\begin{split}
\sup_{T_n\le t\le T_{n+1}}\Vert G_\infty(t)-F(t)\Vert_{S}&\le \sup_{T_n\le t\le T_{n+1}}\Vert G_\infty(t)-G_n(t)\Vert_{S}+\sup_{T_n\le t\le T_{n+1}}\Vert G_n(t)-F(t)\Vert_{S}\\
&\lesssim \varepsilon^3e^{-n\delta/4}.
\end{split}
\end{equation*}
This finishes the proof.
\end{proof}

\subsection{Large time Sobolev unboundedness} We will firstly study the dynamics of the resonant system \eqref{RSS}, then we apply the modified scattering results above to gain the large time behavior of the wave guide Schr\"odinger equation. The following strategy allows us to transfer informations from a global solution $a(t)$ of \eqref{Rssa} to a solution of \eqref{Rss+}, all we need to do is to take an initial datum of the form
\begin{equation*}
G_0(x,y)=\check{\varphi}(x)g(y),\ \varphi\in\mathcal{S}(\mathbb{R})\ ,
\end{equation*}
where $g_p=a_p(0)$, and $ \check{\varphi}(x)$ is the inverse Fourier transform of $\varphi$. The solution $G(t)$ to \eqref{Rss+} with initial data $G_0$ as above is given in Fourier space by 
\begin{equation}\label{defofG}
\widehat{G}_p(t, \xi)=\varphi(\xi)a_p(\varphi(\xi)^2 t)\ .
\end{equation} 
In particular, if $\varphi=1$ on an open interval $I$, then $\widehat G_p(t, \xi)=a_p(t)$ for all $t\in \R$ and $\xi \in I$. For $\xi\in I$, the resonant system turns out to be the cubic Szeg\H{o} equation.

Let us recall the infinite cascade result for the cubic Szeg\H{o} equation.
\begin{theorem}\cite{GGbook}\label{SZEGOUNBDD} 
For any $v_0\in C^\infty_+:=\cap_s H^s$, for any $M$, for any $r>\frac12$, there exists a sequence $(v_0^n)$ of elements of $C_+^\infty$ tending to $v_0$ in $C_+^\infty$ and a sequence of time $t_n$, $|t_n|$ tending to $\infty$, such that the corresponding solution $v_n$ of the cubic Szeg\H{o} equation
\begin{equation}\label{szegoc3}
i\partial_tv=\Pi_+(|v|^2v)\ ,\ v(0)=v_0^n\ ,
\end{equation} 
satisfies
\begin{equation}
\frac{\Vert v^n(t^n)\Vert_{H^r}}{|t_n|^M}\to\infty\ ,\  n\to\infty\ .
\end{equation}
\end{theorem}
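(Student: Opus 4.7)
My plan is to exploit the complete integrability of the cubic Szegő equation and reduce the question to the dynamics on a finite-dimensional invariant submanifold, then use density and continuity to extend the conclusion to an arbitrary smooth datum.

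First, I would invoke the known structure theorem of Gérard--Grellier: the set $V(N) \subset L^2_+(\T)$ of rational functions of the form $A(z)/B(z)$ with $B$ a Blaschke product of degree at most $N$ and $\deg A \le N$ is invariant under the Szegő flow, and the restricted dynamics is a completely integrable Hamiltonian system of (real) dimension $4N$. Using the Lax pair $(H_v,B_v)$ from Theorem~\ref{szlax}, the spectral data of $H_v^2$ provide action-type conservation laws, and one can introduce angle variables conjugate to them. The $H^r$ norm for $r>\tfrac12$ is not controlled by these actions alone: in terms of the poles of $B$ and the residues of $A/B$, one has the schematic bound $\Vert v\Vert_{H^r}^2 \gtrsim \sum_j |\mathrm{res}_j|^2\, (1-|z_j|)^{-(2r-1)}$, so $\Vert v\Vert_{H^r}$ blows up whenever a pole approaches $\T$.

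Second, I would construct on $V(2)$ (or some small $V(N)$) an explicit family of orbits whose poles drift towards the unit circle along a subsequence of times $t_n\to\infty$. The angle dynamics on the Liouville torus, combined with the fact that the frequency map is non-constant and the blow-up locus (where a pole touches $\T$) has codimension one inside the closure of $V(N)$, allows one to find initial data whose trajectory comes super-polynomially close to that locus at specific times. Concretely, by choosing actions for which the frequency vector has a suitable rational approximation property, one arranges $\min_j (1-|z_j(t_n)|) \lesssim |t_n|^{-M-1}$ for any prescribed $M$, so that $\Vert v(t_n)\Vert_{H^r}/|t_n|^M \to +\infty$.

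Third, I would use density and continuity of the flow to transfer this to a neighborhood of an arbitrary $v_0 \in C^\infty_+$. Since $\bigcup_N V(N)$ is dense in $C^\infty_+$, one can pick a sequence of finite-rank approximations $v_0^n$ converging to $v_0$ in every $H^s$. The construction of the previous step is robust under small perturbation within $V(N)$ (the frequency map and the blow-up locus vary smoothly), so each $v_0^n$ can be slightly perturbed to a new datum — still converging to $v_0$ in $C^\infty_+$ — for which the corresponding solution realizes the required growth at some $t_n$.

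The main obstacle is the quantitative step two: producing orbits whose poles approach $\T$ at controllable times with the super-polynomial rate demanded by the theorem. This is an elementary-looking but delicate small-divisor problem on the action--angle torus, and genuinely uses the explicit form of the frequency map for the Szegő hierarchy, which is precisely the content worked out in \cite{GGbook}. Once this quantitative cascade on an invariant finite-dimensional manifold is established, the density/perturbation argument to pass to arbitrary $v_0$ is essentially soft.
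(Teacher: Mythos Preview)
The paper does not prove this theorem at all: it is quoted from \cite{GGbook} and used as a black box, so there is no ``paper's own proof'' to compare against. Your task was therefore not really to reproduce an argument from the present paper.

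As for the sketch itself, the overall architecture --- work on the finite-rank invariant manifolds $V(N)$ of rational data, exhibit orbits with unbounded $H^r$ norm there, then pass to arbitrary $v_0$ by density --- is indeed the skeleton of the G\'erard--Grellier approach. However, your step two is where all the content lies, and you effectively concede this by writing that it ``is precisely the content worked out in \cite{GGbook}.'' That makes the proposal circular: you are proposing to prove the cited theorem by invoking the cited reference for its hardest step. Moreover, the mechanism you describe is somewhat off. The growth is not produced by a classical small-divisor recurrence on a compact Liouville torus; rather, the action--angle picture for the Szeg\H{o} flow involves \emph{pairs} of Hankel operators $H_v$ and $K_v=H_{S^*v}$, and the unbounded motion comes from near-coincidences among their singular values, which make certain angle frequencies nearly resonant and drive a pole of the rational symbol toward $\T$ along an explicit, computable trajectory. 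The actual proof in \cite{GGbook} combines such explicit finite-rank examples with a Baire category argument to get the genericity/density statement, rather than a perturbative ``robustness of the frequency map'' argument as you suggest in step three.

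In short: the outline is in the right spirit, but as written it defers the only nontrivial step to the very reference being cited, and the heuristic you give for that step does not match the genuine mechanism.
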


\begin{assumption}\label{assumption}
$G_0\in S^+$ with $\Vert G_0\Vert_{S^+}$ small, $G_0(y)=-G_0(y+\pi)$, and there exists some non empty open set $I\ne\emptyset$, such that $\ G_0(\xi)=v_0\ \forall \xi\in I$, while the corresponding solution of the cubic Szeg\H{o} equation \eqref{szegoc3} with $v_0$ as the initial data admits an unbounded trajectory as described in Theorem~\ref{SZEGOUNBDD}.
\end{assumption}

Let $G$ be a solution to
\begin{equation}
\left\{\begin{split}
i\partial_t G&=\mathcal{R}[G,G,G]\\
G(0)&=G_0
\end{split}\right.
\end{equation}
with $G_0$ satisfies Assumption~\ref{assumption}, then
\begin{equation}\label{blG}
\Vert G(t)\Vert_{L_x^2H_y^s}\geq\big(\int_I \Vert\widehat{G}(t,\xi)\Vert_{H_y^s}^2d\xi\big)^{1/2}=|I|^{1/2} \Vert v(t)\Vert_{H_y^s}\to\infty\ ,
\end{equation}
for any $s>1/2$.

By Theorem~\ref{mscattering}, for the solutions to the resonant system $G$ as above, there exists solutions to the wave guide Schr\"odinger equation \eqref{HWS}, such that \eqref{MW} holds. Then the large time behavior of $G(t)$ \eqref{blG} leads to the large time unbounded Sobolev trajectories of solutions to the equation \eqref{HWS}.
\begin{corollary}
Given $N\geq13$, then for any $\varepsilon>0$, there exists $U_0\in S^+$ with $\Vert U_0\Vert_{S^+}\leq\varepsilon$ such that the corresponding solution to \eqref{HWS} satisfies
\begin{equation}
\limsup_{t\to\infty}\frac{\Vert U(t)\Vert_{L_x^2H_y^s}}{(1+\log|t|)^M}=\infty\ ,\quad \forall s> 1/2\ ,\ \forall M\ . 
\end{equation} 
\end{corollary}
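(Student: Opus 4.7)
The plan is to apply the modified wave operator provided by Theorem~\ref{mscattering}(1) to a resonant datum $G_0$ carefully engineered so that the resonant dynamics decouples (via Proposition~\ref{decoupling}) into a one-parameter family of cubic Szeg\H{o} evolutions, into which a super-polynomial cascade furnished by Theorem~\ref{SZEGOUNBDD} is injected. The logarithmic factor $\log|t|$ in the denominator will arise naturally from the slow time $t\mapsto\pi\log t$ built into the modified scattering.

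\textbf{Construction of the datum.} First I will select, using Theorem~\ref{SZEGOUNBDD} within the subspace of odd-Fourier-mode functions (which is invariant under the Szeg\H{o} flow), a small $v_0\in C^\infty_+$ with $v_0(y+\pi)=-v_0(y)$ whose Szeg\H{o} solution $v$ satisfies $\|v(t_n)\|_{H^s_y}/|t_n|^M\to\infty$ along a sequence $t_n\to+\infty$, for every $M>0$. Then I pick a non-negative Schwartz function $\varphi$ with $\varphi\equiv 1$ on some non-empty open interval $I$, and set $\widehat{G_0}(\xi,y):=\varphi(\xi)v_0(y)$. The resulting $G_0$ lies in $S^+$, inherits the antiperiodicity \eqref{Initial} from $v_0$, and rescaling $v_0$ ensures $\|G_0\|_{S^+}\le\varepsilon$.

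\textbf{Importing the cascade.} Let $G(t)$ denote the solution of \eqref{RSS} with $G(0)=G_0$. Combining \eqref{defofG} with the reduction in Proposition~\ref{decoupling} (whose global phase is irrelevant for norms), one has $\widehat{G}_p(t,\xi)=\varphi(\xi)\,a_p(\varphi(\xi)^2 t)$ with $a_p(t)=\mathcal{F}_y v(t)(p)$. Restricting $\xi$ to $I$, where $\varphi\equiv 1$, gives $\widehat{G}_p(t,\xi)=a_p(t)$ and hence
\begin{equation*}
\|G(t)\|_{L^2_xH^s_y}^2 \;\geq\; \int_I\sum_p(1+|p|^2)^s|a_p(t)|^2\,d\xi \;=\; |I|\,\|v(t)\|_{H^s_y}^2.
\end{equation*}

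\textbf{Conclusion.} Theorem~\ref{mscattering}(1) then produces $U\in C([0,\infty);S)$ solving \eqref{HWS} with $\|e^{-it\mathcal{A}}U(t)-G(\pi\log t)\|_S\to 0$. Since $S\hookrightarrow L^2_xH^s_y$ for $s\le N$ and $e^{it\mathcal{A}}$ acts as an isometry on $L^2_xH^s_y$ (being a pure Fourier phase), we deduce $\|U(t)\|_{L^2_xH^s_y}=\|G(\pi\log t)\|_{L^2_xH^s_y}+o(1)$. Evaluating at $\tau_n:=e^{t_n/\pi}$, so that $\pi\log\tau_n=t_n$, yields
\begin{equation*}
\frac{\|U(\tau_n)\|_{L^2_xH^s_y}}{(1+\log\tau_n)^M} \;\geq\; |I|^{1/2}\,\frac{\|v(t_n)\|_{H^s_y}}{(1+t_n/\pi)^M}-o(1) \;\longrightarrow\; \infty.
\end{equation*}
The one delicate point is arranging that the Szeg\H{o} cascade datum $v_0$ is simultaneously small, smooth, and supported on odd modes; since the Szeg\H{o} flow preserves this antisymmetry and the finite-dimensional invariant submanifolds underlying Theorem~\ref{SZEGOUNBDD} sit inside the odd-mode subspace, this is a routine check rather than a genuine obstacle.
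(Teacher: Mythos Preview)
Your proposal is correct and follows essentially the same route as the paper: construct $G_0$ of the form $\widehat{G_0}(\xi,y)=\varphi(\xi)v_0(y)$ with $\varphi\equiv1$ on an interval $I$ and $v_0$ a Szeg\H{o} cascade datum, use \eqref{defofG} to lower-bound $\|G(t)\|_{L^2_xH^s_y}$ by $|I|^{1/2}\|v(t)\|_{H^s_y}$, and then invoke the modified wave operator of Theorem~\ref{mscattering}(1) together with the time change $t\mapsto\pi\log t$. Your final computation with $\tau_n=e^{t_n/\pi}$ makes the logarithmic rate explicit; on the odd-mode compatibility you are at exactly the level of detail the paper adopts in Assumption~\ref{assumption}, though your specific claim that the relevant invariant submanifolds sit inside the odd-mode subspace is not the cleanest justification---the invariance of the antiperiodic sector under the Szeg\H{o} flow, combined with the genericity form of Theorem~\ref{SZEGOUNBDD}, suffices.
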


\begin{remark}
As we announced in the introduction of this paper, the unbounded Sobolev norms in our theorem are just above the energy norm. The growth is as large as $(\log |t|)^M$ for any $M$ for solutions with small initial data in $S^+$, which is almost optimal for the dispersive wave guide Schr\"odinger equation.

Moreover, in view of Theorem~\ref{SZEGOUNBDD} by G\'erard and Grellier \cite{GGbook}, we expect that there exist some Banach space $B$, such that the set
$$ G:=\left \{ U_0\in B\, :\,  \forall s>\frac 12\ ,\forall M\in \Z_+ \ ,\ \limsup _{\vert t\vert\rightarrow +\infty } \frac{\Vert U(t)\Vert _{H^s}}{(\log|t|)^M} =+\infty \right \}$$
is a dense $G_\delta$ subset of $B$. The difficulty comes from the gap between $S$ and $S^+$ in the modified scattering argument, which already exists in the early results of Ozawa \cite{Ozawa} and Hayashi--Naumkin--Shinomura--Tonegawa \cite{HNST}.
\end{remark}
\section{Appendix}
We now turn to our basic lemma allowing to transform suitable $L^2_{x,y}$ bounds to bounds in terms of the $L^2_{x,y}$-based spaces $S$ and $S^+$.
We define an \emph{LP-family} $\widetilde{Q}=\{\widetilde{Q}_A\}_A$ to be a family of operators (indexed by the dyadic integers) of the form
\begin{equation*}
\widehat{\widetilde{Q}_1f}(\xi)=\widetilde{\varphi}(\xi)\widehat{f}(\xi),\qquad \widehat{\widetilde{Q}_Af}(\xi)=\widetilde{\phi}(\frac{\xi}{A})\widehat{f}(\xi), \quad A\ge 2
\end{equation*}
for two smooth functions $\widetilde{\varphi},\widetilde{\phi}\in C^\infty_c(\mathbb{R})$ with $\widetilde{\phi}\equiv0$ in a neighborhood of $0$.

We define the set of \emph{admissible transformations} to be the family of operators $\{T_A\}$ where for any dyadic number $A$,
\begin{equation*}
T_A=\lambda_A\widetilde{Q}_A,\qquad \vert\lambda_A\vert\le 1
\end{equation*}
for some LP-family $\widetilde{Q}$. 

If $F\in\mathcal{B}$, then for any admissible transformation family $T=\{T_A: A \text{ dyadic numbers }\}$, $\sum\limits_A T_AF$ converges in $\mathcal{B}$. And this norm $\mathcal{B}$ is called admissible if
\begin{equation}\label{admissible}
\Vert \sum_A T_AF\Vert_{\mathcal{B}}\lesssim \Vert F\Vert_{\mathcal{B}}.
\end{equation}

\begin{lemma}\label{Admissible}
Recall the definitions of the norms $S$, $S^+$, $Z$ and $\widetilde{Z}_t$, 
\begin{align*}
\Vert F\Vert_{S}:=&\Vert F\Vert_{H^N_{x,y}}+\Vert xF\Vert_{L^2_{x,y}}\ ,\quad
\Vert F\Vert_{S^+}:=\Vert F\Vert_S+\Vert (1-\partial_{xx})^4F\Vert_{S}+\Vert xF\Vert_{S}\ ,\\
\Vert F\Vert_{Z}:=&\sup_{\xi\in\R}\left[1+\vert \xi\vert^2\right]\|\widehat{F}(\xi,\cdot)\|_{B^1}\ ,\quad 
\Vert F\Vert_{\widetilde{Z}_t}:=\Vert F\Vert_Z+(1+\vert t\vert)^{-\delta}\Vert F\Vert_S\ .                                                    
\end{align*}
All these norms are admissible.
\end{lemma}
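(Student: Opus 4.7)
The plan is to reinterpret the admissible family as a single Fourier multiplier in the $x$ variable alone. Writing
\begin{equation*}
\sum_A T_A F = m(D_x) F, \qquad m(\xi) := \lambda_1 \widetilde\varphi(\xi) + \sum_{A\geq 2} \lambda_A \widetilde\phi(\xi/A),
\end{equation*}
the support of $\widetilde\phi$ away from the origin ensures that at each $\xi$ only $O(1)$ dyadic $A$ contribute, so $m$ is pointwise well-defined, bounded, and satisfies the Mihlin-type estimates $|\xi^k \partial_\xi^k m(\xi)| \lesssim 1$ for every $k\geq 0$. Consequently $m(D_x)$ is bounded on $L^2_x$, on $H^\alpha_x$ for every $\alpha \geq 0$, commutes with $(1-\partial_{xx})^s$, and commutes with every Fourier multiplier acting in $y$.

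For the $Z$ norm, the identity $\widehat{m(D_x)F}(\xi,y) = m(\xi)\, \widehat F(\xi, y)$ together with $|m(\xi)| \lesssim 1$ gives $\|m(D_x) F\|_Z \lesssim \|F\|_Z$ directly. For the $S$ norm, the $H^N_{x,y}$ piece follows from the $H^N$-boundedness of $m(D_x)$; for the weighted piece we use the standard Fourier commutator identity $[x, m(D_x)] = -i\, m'(D_x)$ to write
\begin{equation*}
\|x\, m(D_x) F\|_{L^2_{x,y}} \leq \|m(D_x)(xF)\|_{L^2_{x,y}} + \|m'(D_x) F\|_{L^2_{x,y}} \lesssim \|xF\|_{L^2_{x,y}} + \|F\|_{L^2_{x,y}},
\end{equation*}
where $m'$ is again a Mihlin symbol, in fact with additional decay $|m'(\xi)| \lesssim (1+|\xi|)^{-1}$ reflecting the commutator bound \eqref{comm}. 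Since $\widetilde Z_t$ is a linear combination of $Z$ and $S$, it inherits admissibility from these two cases.

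For $S^+$, commutativity of $m(D_x)$ with $(1-\partial_{xx})^4$ reduces $\|(1-\partial_{xx})^4 m(D_x)F\|_S$ to the $S$-admissibility applied to $(1-\partial_{xx})^4 F$. For $\|x\,m(D_x) F\|_S$, we unfold its two defining pieces via the iterated identity $x^k m(D_x) = \sum_{j=0}^k \binom{k}{j} (-i)^{k-j} m^{(k-j)}(D_x)\, x^j$ with $k=1$ (for the $H^N_{x,y}$ part) and $k=2$ (for the $L^2$ part of $\|x \cdot xF\|_{L^2}$); each $m^{(j)}(D_x)$ is a bounded Fourier multiplier on $H^N_x$ by the Mihlin property, so all resulting terms are dominated by $\|F\|_S + \|xF\|_S + \|(1-\partial_{xx})^4 F\|_S = \|F\|_{S^+}$. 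The only delicate point is verifying the Mihlin bounds uniformly across the low-frequency endpoint $\lambda_1 \widetilde\varphi(\xi)$, but this contributes a single smooth compactly supported symbol and is absorbed harmlessly in every estimate. Once the multiplier picture is in place, the remaining work is the bookkeeping of finitely many commutators, which constitutes the only substantive effort.
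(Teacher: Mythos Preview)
Your proof is correct and follows essentially the same idea as the paper's: both rest on the observation that at each frequency $\xi$ only $O(1)$ of the $T_A$ contribute, so the combined operator is a bounded Fourier multiplier in $x$ whose derivatives are likewise bounded. The paper carries this out by direct computation on the Fourier side (and only writes out the $S$ case), while you package it as a single symbol $m$ satisfying Mihlin-type bounds and then invoke commutator identities; the difference is purely presentational, and your version is in fact more complete since it treats $Z$, $\widetilde Z_t$, and $S^+$ explicitly.
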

\begin{proof}
Due to the definition of admissible transformation, we may only deal with functions independent on $y$. Let us prove with the $S$ norm for example. Indeed,
\begin{align*}
\Vert \sum_A T_Af\Vert_{H^N}^2&=\int_{\mathbb{R}}\langle\xi\rangle^{2N}\Big\vert\lambda_1\widetilde{\varphi}(\xi)\widehat{f}(\xi)+\lambda_A\widetilde{\phi}(\frac{\xi}{A})\widehat{f}(\xi)\Big\vert^2d\xi\\
&\leq \int_{\mathbb{R}}\Big(\vert\lambda_1\vert^2\widetilde{\varphi}(\xi)+\vert\lambda_A\vert^2\widetilde{\phi}(\frac{\xi}{A})\Big)\langle\xi\rangle^{2N}\vert\widehat{f}(\xi)\vert^2d\xi\\
&\leq\Vert f\Vert_{H^N}^2\ ,
\end{align*}
while
\begin{align*}
\Vert x\sum_A T_Af\Vert_{L^2}^2&=\int_{\mathbb{R}}\Big\vert\partial_\xi\big(\lambda_1\widetilde{\varphi}(\xi)\widehat{f}(\xi)+\lambda_A\widetilde{\phi}(\frac{\xi}{A})\widehat{f}(\xi)\big)\Big\vert^2d\xi\\
&\leq \int_{\mathbb{R}}\Big(\vert\lambda_1\vert^2\widetilde{\varphi}(\xi)+\vert\lambda_A\vert^2\widetilde{\phi}(\frac{\xi}{A})\Big)\vert\partial_\xi\widehat{f}(\xi)\vert^2d\xi\\
&\qquad+\int_{\mathbb{R}}\Big(\vert\lambda_1\vert^2\widetilde{\varphi'}(\xi)+\frac{\vert\lambda_A\vert^2}{A}\widetilde{\phi'}(\frac{\xi}{A})\Big)\vert\widehat{f}(\xi)\vert^2d\xi\\
&\leq\Vert xf\Vert_{L^2}^2+\Vert f\Vert_{L^2}^2\ ,
\end{align*}
thus
\begin{equation*}
\Vert \sum_A T_A f\Vert_S\lesssim \Vert f\Vert_S\ .
\end{equation*}
\end{proof}

Given a trilinear operator 
$\mathfrak{T}$ and a set $\Lambda$ of 4-tuples of dyadic integers, we define an \emph{admissible realization} of $\mathfrak{T}$ at $\Lambda$ to be an operator of the form which converges in $L^2$,
\begin{equation}
\mathfrak{T}_\Lambda[F,G,H]=\sum_{(A,B,C,D)\in\Lambda}T_D\mathfrak{T}[T^\prime_AF,T^{\prime\prime}_BG,T^{\prime\prime\prime}_CH]
\end{equation}
for some admissible transformations $T$, $T^\prime$, $T^{\prime\prime}$, $T^{\prime\prime\prime}$.
\begin{lemma}\label{SS+}
Assume that a trilinear operator $\mathfrak{T}$ satisfies
\begin{equation}\label{LeibnitzRule}
\begin{split}
Z\mathfrak{T}[F,G,H]= \mathfrak{T}[ZF,G,H]+\mathfrak{T}[F,ZG,H]+\mathfrak{T}[F,G,ZH],
\end{split}
\end{equation}
for $Z\in\{x,\partial_x,\partial_{y}\}$ and let $\Lambda$ be a set of $4$-tuples of dyadic integers. With the notation introduced above, assume also that for all admissible realizations of $\mathfrak{T}$ at $\Lambda$,
\begin{equation}\label{LB}
\Vert \mathfrak{T}_\Lambda[F^a,F^b,F^c]\Vert_{L^2}\le K\min_{\{\alpha,\beta,\gamma\}=\{a,b,c\}}\Vert F^\alpha\Vert_{L^2}\Vert F^\beta\Vert_{\mathcal{B}}\Vert F^\gamma\Vert_{\mathcal{B}}
\end{equation}
for some admissible norm $\mathcal{B}$ such that the Littlewood-Paley projectors $P_{\le M}$ (both in $x$ and in $y$) are uniformly bounded on $\mathcal{B}$. Then, for all admissible realizations of $\mathfrak{T}$ at $\Lambda$,
\begin{equation}\label{SSS}
\Vert \mathfrak{T}_\Lambda[F^a,F^b,F^c]\Vert_{S}\lesssim K\max_{\{\alpha,\beta,\gamma\}=\{a,b,c\}}\Vert F^\alpha\Vert_{S}\Vert F^\beta\Vert_{\mathcal{B}}\Vert F^\gamma\Vert_{\mathcal{B}}\ .
\end{equation}
Assume in addition that, for $Y\in\{x,(1-\partial_{xx})^4\}$,
\begin{equation}\label{YB}
\Vert YF\Vert_{\mathcal{B}}\lesssim \theta_1\Vert F\Vert_{S^+}+\theta_2\Vert F\Vert_S\ ,
\end{equation}
then for all admissible realizations of $\mathfrak{T}$ at $\Lambda$,
\begin{equation}\label{SSS+}
\begin{split}
\Vert \mathfrak{T}_\Lambda[F^a,F^b,F^c]\Vert_{S^+}&\lesssim K\max_{\{\alpha,\beta,\gamma\}=\{a,b,c\}}\Vert F^\alpha\Vert_{S^+}\big(\Vert F^\beta\Vert_{\mathcal{B}}+\theta_1\Vert F^\beta\Vert_{S}\big) \Vert F^\gamma\Vert_{\mathcal{B}}\\
&\quad+\theta_2K\max_{\{\alpha,\beta,\gamma\}=\{a,b,c\}}\Vert F^\alpha\Vert_{S}\Vert F^\beta\Vert_{S}\Vert F^\gamma\Vert_{\mathcal{B}}\ .
\end{split}
\end{equation}
\end{lemma}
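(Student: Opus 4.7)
The plan is to reduce every claim to the $L^2$ hypothesis \eqref{LB} by transporting the operators $\partial_x$, $\partial_y$, $x$ (and, for $S^+$, also $(1-\partial_{xx})^4$) through the trilinear form $\mathfrak{T}$ via the Leibniz rule \eqref{LeibnitzRule}, while exploiting the structure of admissible realizations (an admissible transformation composed with a derivative or a commutator with $x$ is again admissible, up to an overall dyadic factor).

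For the $S$-bound \eqref{SSS}, the $L^2$ contribution is exactly \eqref{LB}. For the $H^N_{x,y}$ contribution, I would apply a mixed derivative $\partial_x^\alpha\partial_y^\beta$ with $|\alpha|+|\beta|\le N$ and use \eqref{LeibnitzRule} to distribute it among the three inputs. Each such derivative combines cleanly with the LP families $T'_A$, $T''_B$, $T'''_C$ to produce new admissible transformations with dyadic weights $A^{|\alpha_1|}$, $B^{|\alpha_2|}$, $C^{|\alpha_3|}$, so by the frequency-support property of $\mathfrak{T}$ (output scale controlled by $\max(A,B,C)$) all $N$ derivatives can be absorbed onto whichever input realizes this maximum; the result is an admissible realization of $\mathfrak{T}$ at $\Lambda$, whose $L^2$-norm is then bounded via \eqref{LB} by $K\,\|F^\alpha\|_{H^N}\|F^\beta\|_{\mathcal{B}}\|F^\gamma\|_{\mathcal{B}}\lesssim K\|F^\alpha\|_S\|F^\beta\|_{\mathcal{B}}\|F^\gamma\|_{\mathcal{B}}$ (the $\max$ in \eqref{SSS} reflecting that any of the three slots may host the maximum). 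For the weight $\|x\,\mathfrak{T}_\Lambda[F^a,F^b,F^c]\|_{L^2}$, \eqref{LeibnitzRule} yields admissible realizations applied to $(xF^a,F^b,F^c)$ and its cyclic variants, modulo commutators $[x,T_A]$ produced while pulling $x$ through the LP families; each such commutator is itself an admissible transformation with coefficient $|\lambda'_A|\lesssim A^{-1}$ (compare \eqref{comm}) absorbed into a new LP-family element, so the commutator pieces are again admissible realizations at $\Lambda$ and are directly handled by \eqref{LB}.

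For the $S^+$-bound \eqref{SSS+}, I would iterate this mechanism with $Y\in\{x,(1-\partial_{xx})^4\}$ coming from the definition of $S^+$, noting that $(1-\partial_{xx})^4$ obeys a Leibniz expansion via repeated applications of \eqref{LeibnitzRule} for $\partial_x$. Distributing $Y$ among the three inputs: when $Y$ falls on the input $F^\alpha$ that played the role of the $S$-slot in Step 1, one gets $\|YF^\alpha\|_S\lesssim\|F^\alpha\|_{S^+}$ directly from the definition of $S^+$, producing the main summand $K\max\|F^\alpha\|_{S^+}\|F^\beta\|_{\mathcal{B}}\|F^\gamma\|_{\mathcal{B}}$ of \eqref{SSS+}; when $Y$ falls on a $\mathcal{B}$-slot, hypothesis \eqref{YB} gives $\|YF\|_{\mathcal{B}}\lesssim\theta_1\|F\|_{S^+}+\theta_2\|F\|_{S}$, and after reindexing so that the $\max$ selects the $S^+$-slot, the $\theta_1$ piece contributes the $\theta_1\|F^\beta\|_S$ term in the parenthesis of \eqref{SSS+}, while the $\theta_2$ piece contributes the second summand. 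Throughout, the admissibility of $\mathcal{B}$ (Lemma~\ref{Admissible}) is used to ensure that the LP projections inserted in an admissible realization do not inflate any $\mathcal{B}$-norm.

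The main technical obstacle I anticipate is the combinatorial bookkeeping when commuting $x$ and $(1-\partial_{xx})^4$ past the four LP families of an admissible realization. Each such commutator must be recognized as a new admissible transformation with a harmless modified coefficient $|\lambda'_A|\le 1$, so that the resulting error terms again fall within the scope of the hypothesis \eqref{LB} rather than requiring a separate argument. Once this organization is set up — together with the admissibility of $S$, $S^+$, $\mathcal{B}$ from Lemma~\ref{Admissible} — the whole proof collapses into repeated invocations of \eqref{LB} and of the embeddings built into the definitions of $S$ and $S^+$.
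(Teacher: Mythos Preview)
Your handling of the weight $\|x\,\mathfrak{T}_\Lambda\|_{L^2}$ and of the $S^+$ step matches the paper's argument. The gap is in your treatment of the $H^N_{x,y}$ component.

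Your plan is to distribute $\partial_x^\alpha\partial_y^\beta$ via Leibniz and then absorb the resulting dyadic factors $A^{|\alpha_1|}B^{|\alpha_2|}C^{|\alpha_3|}$ onto the input carrying the maximal $x$-frequency, invoking a ``frequency-support property of $\mathfrak{T}$ (output scale controlled by $\max(A,B,C)$)''. Two problems: (i) no such relation between $D$ and $\max(A,B,C)$ is assumed in the lemma --- $\Lambda$ is an arbitrary set of $4$-tuples --- so you cannot trade the outer $T_D$ derivative weight against $\max(A,B,C)$; (ii) the LP families $T_A,T'_A,\dots$ defining an admissible realization act in $x$ only, so $\partial_y$ does not combine with them to produce a new admissible transformation, and there is no mechanism in your scheme to move $\partial_y$-derivatives from one slot to another. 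In short, your argument would at best control $\|\partial_x^N\mathfrak{T}_\Lambda\|_{L^2}$, not the full $H^N_{x,y}$ norm.

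The paper's route avoids both issues by working with the \emph{full} $(x,y)$ Littlewood--Paley projector $P_M$: write $\|\cdot\|_{H^N}^2=\sum_M M^{2N}\|P_M\,\cdot\,\|_{L^2}^2$ and split each term into a ``high'' part (at least one input carries $P_{\ge 2M}$) and a ``low'' part (all inputs carry $P_{\le M}$). For the high part one simply puts the high-frequency input in the $L^2$-slot of \eqref{LB} and uses $M^{2N}\|P_{\ge 2M}F\|_{L^2}^2\lesssim\|F\|_{H^N}^2$. For the low part one uses Bernstein on the \emph{output}, $M^{N}\|P_M\,\cdot\,\|_{L^2}\lesssim M^{-N}\|Z^{2N}P_M\,\cdot\,\|_{L^2}$ with $Z\in\{\partial_x,\partial_y\}$, distributes $Z^{2N}$ by Leibniz, bounds $\|Z^{\alpha}P_{M_j}F\|_{L^2}\lesssim M_j^{\alpha}\|P_{M_j}F\|_{L^2}$, and closes with a Schur test in the dyadic scales. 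This uses only \eqref{LB} with projected inputs (hence the hypothesis that $P_{\le M}$ is uniformly bounded on $\mathcal{B}$) and makes no assumption on how $D$ relates to $A,B,C$.
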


\begin{proof}
Let us start with \eqref{SSS}. \\
{\bf 1. The weighted component of $S$ norm.}
By rewriting $xT_A=[x,T_A]+T_Ax$ and using \eqref{LeibnitzRule}, we have
\begin{equation}\begin{split}
&x\mathfrak{T}_\Lambda[F^a,F^b,F^c]=\sum_{(A,B,C,D)\in\Lambda}xT_D\mathfrak{T}[T^\prime_AF^a,T^{\prime\prime}_BF^b,T^{\prime\prime\prime}_CF^c]\\
&\qquad=\sum_{(A,B,C,D)\in\Lambda}[x,T_D]\mathfrak{T}[T^\prime_AF^a,T^{\prime\prime}_BF^b,T^{\prime\prime\prime}_CF^c]+\sum_{(A,B,C,D)\in\Lambda}T_D\mathfrak{T}[[x,T^\prime_A]F^a,T^{\prime\prime}_BF^b,T^{\prime\prime\prime}_CF^c]\\
&\qquad+\sum_{(A,B,C,D)\in\Lambda}T_D\mathfrak{T}[T^\prime_AF^a,[x,T^{\prime\prime}_B]F^b,T^{\prime\prime\prime}_CF^c]+\sum_{(A,B,C,D)\in\Lambda}T_D\mathfrak{T}[T^\prime_AF^a,T^{\prime\prime}_BF^b,[x,T^{\prime\prime\prime}_C]F^c]\\
&\qquad+\mathfrak{T}_\Lambda[xF^a,F^b,F^c]+\mathfrak{T}_\Lambda[F^a,xF^b,F^c]+\mathfrak{T}_\Lambda[F^a,F^b,xF^c]\ .
\end{split}\end{equation}
By simple calculation, we have
$$[x,Q_A]=A^{-1}Q'_A\ .$$
We notice that if $Q_A$ is an LP-family, $Q'_A$ is also an LP-family, then $[x,T_A]$ is also an admissible transformation. Thus, we may consider $x\mathfrak{T}_\Lambda[F^a,F^b,F^c]$ as the following summation
\begin{equation}\label{h0}
\mathfrak{T}_\Lambda[F^a,F^b,F^c]+\mathfrak{T}_\Lambda[xF^a,F^b,F^c]+\mathfrak{T}_\Lambda[F^a,xF^b,F^c]+\mathfrak{T}_\Lambda[F^a,F^b,xF^c]\ ,
\end{equation}
then $\Vert x\mathfrak{T}_\Lambda[F^a,F^b,F^c]\Vert_{L^2}$ follows from \eqref{LB}.

\medskip
\noindent {\bf 2. The $H^N$ component of $S$ norm.} We will use the equivalent definition of $H^N$ norm, 
$$\Vert F\Vert_{H^N}^2:=\sum_{M \text{ dyadic}}M^{2N}\Vert P_M F\Vert_{L^2}^2\ ,$$
with $P_M$ as the Littlewood-Paley projections on $\mathbb{R}\times\mathbb{T}$ defined in Section 2. Then, we may decompose
\begin{equation*}
P_M\mathfrak{T}_\Lambda[F^a,F^b,F^c]=P_M\mathfrak{T}_{\Lambda,low}[F^a,F^b,F^c]+P_M\mathfrak{T}_{\Lambda,high}[F^a,F^b,F^c]\ ,
\end{equation*}
with$\mathfrak{T}_{\Lambda,low}[F^a,F^b,F^c]=\mathfrak{T}_\Lambda[P_{\leq M}F^a,P_{\leq M}F^b,P_{\leq M}F^c]$.

 We have firstly
\begin{equation}\label{h1}
\sum_{M \text{ dyadic}}M^{2N}\Vert P_M\mathfrak{T}_{\Lambda,high}[F^a,F^b,F^c]\Vert_{L^2}^2\lesssim K^2\max_{\{\alpha,\beta,\gamma\}=\{a,b,c\}}\Vert F^\alpha\Vert_{H^N}^2 \Vert F^\beta\Vert_{\mathcal{B}}^2\Vert F^\gamma\Vert_{\mathcal{B}}^2\ ,
\end{equation} 
since
\begin{equation}
\begin{split}
&\sum_M\vert M \vert^{2N}\Vert P_M\mathfrak{T}_\Lambda[P_{\geq 2M}F^a,F^b,F^c]\Vert_{L^2}^2\leq K^2\sum_M \vert M\vert^{2N} \Vert P_{\geq 2M}F^a\Vert_{L^2}^2\Vert F^b\Vert_{\mathcal{B}}^2\Vert F^c\Vert_{\mathcal{B}}^2\\
&\qquad\lesssim K^2\Vert F^a\Vert_{H^N}^2 \Vert F^b\Vert_{\mathcal{B}}^2\Vert F^c\Vert_{\mathcal{B}}^2\ .
\end{split}
\end{equation}

Let $Z\in\{\partial_x,\partial_y\}$, we can bound the contribution of $\mathfrak{T}_{\Lambda,low}$ as below
\begin{equation}
\begin{split}
&M^{N}\Vert P_M\mathfrak{T}_{\Lambda,low}\Vert_{L^2}\\
&\qquad\lesssim M^{-N}\Vert Z^{2N}P_M\mathfrak{T}_{\Lambda,low}[P_{\leq M}F^a,P_{\leq M}F^b,P_{\leq M}F^c]\Vert_{L^2}\\
&\qquad=M^{-N}\Vert \sum_{\alpha+\beta+\gamma\leq 2N}\sum_{M_1,M_2,M_3\leq M}P_M\mathfrak{T}_{\Lambda,low}[Z^{\alpha}P_{ M_1}F^a,Z^{\beta}P_{M_2}F^b,Z^{\gamma}P_{M_3}F^c]\Vert_{L^2}\ .
\end{split}
\end{equation}
Without loss of generality, we assume $M_1=\max{(M_1,M_2,M_3)}\leq M$, then 
\begin{equation}
\begin{split}
M^{N}\Vert P_M\mathfrak{T}_{\Lambda,low}\Vert_{L^2}&\lesssim \sum_{M_1\leq M}M^{-N}M_1^{2N}\sum_{M_2,M_3\leq M_1}\Vert \mathfrak{T}_{\Lambda,low}[P_{ M_1}F^a,P_{M_2}F^b,P_{M_3}F^c]\Vert_{L^2}\\
&\lesssim K \sum_{M_1\leq M}(\frac{M_1}{M})^{-N}M_1^{N}\Vert P_{ M_1}F^a\Vert_{L^2}\Vert F^b\Vert_{\mathcal{B}}\Vert F^c\Vert_{\mathcal{B}}\ ,
\end{split}
\end{equation}
the above sum is in $\ell_M^2$ by Schur test, then
\begin{equation}\label{h2}
\sum_{M \text{ dyadic}}M^{2N}\Vert P_M\mathfrak{T}_{\Lambda,low}[F^a,F^b,F^c]\Vert_{L^2}^2\lesssim K^2\max_{\{\alpha,\beta,\gamma\}=\{a,b,c\}}\Vert F^\alpha\Vert_{H^N}^2 \Vert F^\beta\Vert_{\mathcal{B}}^2\Vert F^\gamma\Vert_{\mathcal{B}}^2\ .
\end{equation}
Therefore we bound the $H^N$ component of $S$ norm, which completes the estimate \eqref{SSS}.

\medskip
Now, we turn to prove the estimate \eqref{SSS+}, due to the definition of $S^+$ norm, we only need to bound $\Vert x\mathfrak{T}_{\Lambda}\Vert_{S}$ and $\Vert (1-\partial_{xx})^4\mathfrak{T}_{\Lambda}\Vert_S$. From \eqref{h0} and \eqref{SSS}, we gain directly
\begin{equation}\begin{split}
&\Vert x\mathfrak{T}_{\Lambda}[F^a,F^b,F^c]\Vert_{S}\\
&\qquad\lesssim \max_{\{\alpha,\beta,\gamma\}=\{a,b,c\}}\Vert F^{\alpha}\Vert_{S^+}\Vert F^\beta\Vert_{\mathcal{B}}\Vert F^\gamma\Vert_{\mathcal{B}}+\max_{\{\alpha,\beta,\gamma\}=\{a,b,c\}}\Vert F^{\alpha}\Vert_{S}\Vert xF^\beta\Vert_{\mathcal{B}}\Vert F^\gamma\Vert_{\mathcal{B}}\ ,
\end{split}\end{equation}
we then using \eqref{YB} to control the norm $\Vert xF\Vert_{\mathcal{B}}$. The estimate on $\Vert (1-\partial_{xx})^4\mathfrak{T}_{\Lambda}\Vert_S$ can be calculated similarly by replacing $x$ with $(1-\partial_{xx})^4$. The proof is completed.
\end{proof}

\begin{remark}\label{leibniz}\mbox{}\\
We have a Leibniz rule for $\mathcal{I}^t[f,g,h]$ and $\mathcal{N}^t[F,G,H]$, namely
\begin{equation}\label{leib}
\begin{split}
&Z\mathcal{I}^t[f,g,h]=\mathcal{I}^t[Zf,g,h]+\mathcal{I}^t[f,Zg,h]+\mathcal{I}^t[f,g,Zh],\quad Z\in\{x,\partial_x\}\ ,\\
&Z\mathcal{N}^t[F,G,H]=\mathcal{N}^t[ZF,G,H]+\mathcal{N}^t[F,ZG,H]+\mathcal{N}^t[F,G,ZH]\ ,\quad Z\in\{x,\partial_x,\partial_y\}\ .
\end{split}
\end{equation}
Property \eqref{leib} will be of importance in order to ensure the hypothesis of  the transfer principle displayed by Lemma~\ref{SS+}.
\end{remark}

\end{document}